\patchcmd{\BR@backref}{\newblock}{\newblock[cited on p.~}{}{}
\patchcmd{\BR@backref}{\par}{]\par}{}{}
\theoremstyle{theorem} 
\declaretheorem[name=Theorem]{thm} 
\declaretheorem[name=Proposition]{prop}
\declaretheorem[name=Lemma]{lem}
\theoremstyle{definition} \declaretheorem[name=Definition]{defin}
\theoremstyle{remark} \declaretheorem[name=Remark]{rem}
\newcommand\cC{{\mathcal C}}
\newcommand\cO{{\mathcal O}} 
\newcommand\cP{{\mathcal P}}
 \newcommand\bN{{\mathbb N}}
 \newcommand\bP{{\mathbb P}}
 \newcommand\bR{{\mathbb R}}
\newcommand\bT{{\mathbb T}} \newcommand\bZ{{\mathbb Z}}
\newcommand{\norm}[1]{\left\lVert{#1}\right\rVert}
\newcommand{\abs}[1]{\left\lvert{#1}\right\rvert}
\newcommand{\map}{T}
\newcommand\leb{{\mathbf m}}
\newcommand\metr{\mathbf{d}}
\newcommand\bp{\mathbf{0}}
\newcommand\pnbd{O}
\newcommand\wholetail{Q}
\newcommand\trn{N}
\newcommand\reg{A}
\newcommand\imreg{C}
\newcommand\symimreg{D}
\newcommand{\rtzeroone}{{\varphi_{0,1}}}
\newcommand{\rtonetwo}{{\varphi_{1,2}}}
\newcommand{\rtzerotwo}{{\varphi_{0,2}}}
\newcommand{\mapone}{{T_{1}}}
\newcommand{\maptwo}{{T_{2}}}
\newcommand\diam{\operatorname{diam}}
\newcommand\corr{\operatorname{cor}}
\numberwithin{equation}{section}
\author{Peyman Eslami and Carlangelo Liverani}
\title[Almost Anosov]{Mixing rates for symplectic almost Anosov maps}
\date{16 March 2020.}
\address{
  Peyman Eslami, Carlangelo Liverani\\
  Dipartimento di Matematica\\
  II Universit\`{a} di Roma (Tor Vergata)\\
  Via della Ricerca Scientifica, 00133 Roma, Italy.} \email{{\tt
  peslami7@gmail.com}, liverani@mat.uniroma2.it}
  \thanks{ The authors acknowledges the MIUR Excellence Department Project awarded to the Department of Mathematics, University of Rome Tor Vergata, CUP E83C18000100006 which supported P.E.; while C.L. was partially supported by the PRIN Grant {\em Regular and stochastic behaviour in dynamical systems} (PRIN 2017S35EHN)}
\begin{document}

\begin{abstract} 
We establish sharp bounds on the mixing rates of a class of two dimensional non-uniformly hyperbolic symplectic maps. This provides a primer on how to investigate such questions in a concrete example and, at the same time, it solves a controversy between previous rigorous results and numerical experiments.
\end{abstract}

\maketitle

\section{Introduction}
The study of decay of correlations for dynamical systems is a problem of paramount physical relevance. While the uniformly hyperbolic case is by now well understood and have been studied with very precise results, in the non-uniformly hyperbolic case there are still plenty of open problems. The basic idea to treat such cases is to induce. While many inducing schemes exist, the basic reference is the work of Lai Sang Young \cite{You1,You2}. This has allowed to obtain important and fairly general results, at least for rank one attractors, for maps with critical points (e.g see \cite{You4} and reference within). In this paper we consider non-uniformly hyperbolic maps without critical points. The one dimensional expanding map case, starting with \cite{LSV} and continuing with \cite{Sarig1, Sarig2, Gouezel}, 
has witnessed many progressess that have developed into a rather satisfactory theory. Recently such a theory has been extended to important multidimensional expanding examples \cite{HV1, HV2, EMV}. On the contrary the study of non-uniformly hyperbolic maps is still unsatisfactory. In particular, few example have been studied \cite{Hu99, LM, BG06, HuZh} and only recently some general strategies are merging \cite{LT, BMT}.

It seems thus important to work out explicitly some relevant example to see how to develop the theory further. In particular, in \cite{LM} it was introduced a natural class of symplectic maps with a neutral fixed point. When such maps are smooth in \cite{LM} it was proven that smooth observables exhibit a decay of correlations with speed, at least,  $n^{-2}(\log n)^{4}$. However numerical studies \cite{AP} suggest that the decay may be faster, leaving the doubt that the strategy used in \cite{LM} is largely suboptimal. Since the strategy is conceptually the same as used in \cite{LSV}, where it is optimal apart form a logarithmic factor, it is of clear interest to investigate if the suggestion coming from numerics is indeed correct, or it is a numerical artefact. 

The present paper shows that the results in \cite{LM} are indeed sub-optimal and that the correlations decay with a faster power law. Moreover, we show that the power law that we obtain is optimal whereby putting to rest any previous doubt on the correct behaviour of the system. 

The strategy used highlights the key ingredients that are necessary in order to achieve similar sharp results in different systems. 
In particular, it should be mentioned that the map considered is analytic and has a Markov partition and we take advantage of these facts in order to simplify the argument and present it in the simplest possible form. Yet, no real conceptual obstacle prevents one from trying to apply a similar strategy to a non-Markov or a piecewise $\cC^{1+\alpha}$ map, where an array of different power-law decays should be present, see Remark \ref{rem:general}.

The paper is organised as follows: in section \ref{sec:prelim} we present the class of maps we investigate, state our main result (Theorem \ref{thm:main}) and we recall some relevant facts from the literature. In section \ref{sec:MP} we describe various Markov partitions used in the following. Section \ref{sec:firstret}  is devoted to the careful study of the return time when inducing on a set away from the fixed point. This is the core of the paper and, beside treating the current example, highlights the ingredients needed to obtain such results in more general models. At last, in section \ref{sec:mixrates} we use the estimates of section \ref{sec:firstret}, several facts from \cite{LM} and the general theory put forward in \cite{BMT} to prove Theorem \ref{thm:main}.

\begin{rem}[Notation]
Let $f, g: \bT^2 \to \bR$ be two functions. We write $f \ll g$ or $f = \cO(g)$ if there exist constants $C, \delta>0$, depending only on the map  \eqref{eq:map} and the Markov partition defined in Section \ref{sec:MP}, such that for every $x, y \in (0, \delta)$, $\abs{f(x,y)} \le C \abs{g(x,y)}$. We write $f \asymp g$ if
$f \ll g$ and
$g\ll f$.
\end{rem}

\section{Preliminaries and results} \label{sec:prelim}
We consider the class of maps $\map:\bT^{2} \circlearrowleft$ from \cite{LM} and defined by
\begin{equation} \label{eq:map} 
  \map(x,y) = (x+h(x)+y, h(x)+ y),
\end{equation}
where $h \in \cC^{\infty}(\bT^{1}, \bT^{1})$. We moreover require the following properties
\begin{enumerate}
\item $h(0)=0$  (zero is a fixed point);
\item $h'(0)=0$ (zero is a neutral fixed point)
\item $h'(x)>0$ for each $x\neq 0$ (hyperbolicity)
\end{enumerate}
Indeed condition (3) implies that, setting $K_0=\{(v_1,v_2)\in\bR^2\;:\; v_1v_2\geq 0\}$, we have that, for all $(x,y)\in\bT^2$, $D_{(x,y)}\map K_0\subset K_0$. Moreover, $D_{(x,y)}\map K_0\subset \textrm{int} K_0\cup\{0\}$ for all $x\neq 0$. Since $\map(0,y)=(y,y)$, it follows that, if $(x,y)\neq 0$, then $D_{(x,y)}\map^2 K_0\subset \textrm{int} K_0\cup\{0\}$. Hyperbolicity follows then by \cite[Theorem 2.2]{Wo}.

Note that conditions (2--3) imply that zero is a minimum for $h'$, which forces 
\begin{equation*}
h''(0)=0;\quad h'''(0)\geq 0.
\end{equation*}
We will restrict to the generic case
\begin{enumerate}
\item[(4)] $h'''(0)>0$.
\end{enumerate}
Hence, $h$, can be written, in a neighborhood of zero,  as
\begin{equation*} h(x)=bx^{3}+\cO(x^{5}), \text{ for some } b>0.
\end{equation*}

To simplify the following arguments we also assume
\begin{equation*} h(-x)= -h(x).
\end{equation*}
Note that this implies $\map$ is reversible (a physically meaningful property) with respect to the
transformations
\begin{equation} \label{eq:reversible}
  \Pi(x,-y-h(x)):=(x, -y-h(x)); \quad \Pi_1(x,y):=(-x, y+h(x)),
\end{equation} in the sense that $\Pi^2=\Pi_1^2 = \mathbf{Id}$ and
$\Pi \map \Pi = \Pi_1 \map \Pi_1 = \map^{-1}$.

For the record, 
\begin{equation} \label{eq:invmap}
    \map^{-1}(x,y) = (x-y, y-h(x-y)).
\end{equation}

\begin{rem} \label{rem:general} The choice $h\in\cC^\infty$ is rather arbitrary. Most of hyperbolic theory applies to the case $h\in\cC^{1+\alpha}$, $\alpha\in\bR$, $\alpha>0$. For example one could consider the cases in which, near zero, $h'(x)\sim |x|^\alpha$, possibly keeping the symmetry condition $h(-x)=-h(x)$. This would yield a large range of different behaviours of the decay of correlations, probably in analogy with what happens in the one dimensional case. We do not pursue this venue here since it requires a considerable amount of extra work. In particular, one would have to extend all the relevant results obtained in \cite{LM} to the present case. However, what we do in the following constitutes a roadmap toward such an extension.
\end{rem}

 We are interested in studying the correlations between two observables:
\begin{equation*}
  \corr(\Phi, \Psi, n) = \int_{\bT^{2}} \Phi
  \cdot \Psi \circ \map^{n} \,d\leb -
  \int_{\bT^{2}} \Phi \,d\leb\int_{\bT^{2}} \Psi \,d\leb.
\end{equation*}
 Our main result consists in the following sharp estimate.
\begin{thm} \label{thm:main}
For all $\eta >0$ sufficiently close to $1$, there exist
    $C_{1}, C_{2} >0$ such that, for every $\Phi, \Psi \in\cC^{\eta}(\bT^{2})$ satisfying $\int_{\bT^{2}} \Phi
\,d\leb\int_{\bT^{2}} \Psi \,d\leb=1$ (and supported away from $\bp$ for the lower bound), the following estimate holds true. For every $n \ge 1$,
  \begin{equation} \label{eq:sharp}
 C_{1}\frac{\norm{\Phi}_{\cC^\eta} \norm{\Psi}_{\cC^\eta}}{(\log n)n^{3}} \le \abs{\corr(\Phi, \Psi, n)} \le
C_{2 }\frac{(\log n)^{4}\norm{\Phi}_{\cC^\eta} \norm{\Psi}_{\cC^\eta} }{n^{3}}.
\end{equation} 
While, for every $\beta<4$ the exists a constant $C_\beta>0$ such that, for all $\Phi \in\cC^{\eta}(\bT^{2})$ with $\int_{\bT^{2}} \Phi \,d\mu= 0$ and $\Psi \in\cC^{\eta}(\bT^{2})$, we have
\begin{equation*}
\abs{\corr(\Phi, \Psi, n)} \le C_\beta n^{-\beta} \norm{\Phi}_{\cC^\eta}  \norm{\Psi}_{\cC^\eta}.
\end{equation*}
\end{thm}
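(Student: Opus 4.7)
The plan is to (i) induce on a set $Y \subset \bT^{2}$ bounded away from the neutral fixed point $\bp$, (ii) prove sharp two-sided bounds on the return-time tail $\leb(\{\tau > n\})$, and (iii) feed these tail estimates into the operator renewal machinery of \cite{BMT}. For (i), I take $Y$ to be a union of Markov partition elements from Section \ref{sec:MP} separated from $\bp$, and let $\tau : Y \to \bN$ be the first return. By \cite{LM}, the induced system $(\map^{\tau}, Y)$ is uniformly hyperbolic with a quasi-compact transfer operator on a suitable Banach space, so that every residual non-uniformity of $\map$ is packaged into the distribution of $\tau$.

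Step (ii) is the crux of the paper, and the step I expect to be the main obstacle. The target estimate, to be proved in Section \ref{sec:firstret}, is
\[
\frac{c_{1}}{(\log n)\, n^{4}} \;\le\; \leb\bigl(\{\tau > n\}\bigr) \;\le\; \frac{c_{2}(\log n)^{4}}{n^{4}}.
\]
The heuristic behind the exponent $4$ is the following. Since $h(x) = bx^{3}+\cO(x^{5})$, the second-order difference equation $\Delta^{2} x_{k} = h(x_{k})$ equivalent to $\map$ has continuous-time counterpart $\ddot x = bx^{3}$, which conserves the ``energy'' $E(x,y) := \tfrac12 y^{2} - \tfrac{b}{4}x^{4}$. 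Along orbits of $\map$ this $E$ is only approximately conserved, but escape from a fixed neighborhood along a level curve $\{E=e\}$, $0<e\ll 1$, still takes $\asymp e^{-1/4}$ iterations, since the transit is dominated by the pinch through $\{x\approx 0\}$ where $|y|\asymp\sqrt e$. Hence the set of points that have not yet returned to $Y$ after $n$ iterations is, to leading order, the band $\{0<E\lesssim n^{-4}\}$, whose two-dimensional Lebesgue measure is $\asymp n^{-3}$ by $\int_{0}^{n^{-4}} dE\int dx/|y|$. Using the tower identity $\sum_{k\ge n} \leb(\{\tau > k\}) \asymp n^{-3}$ (the sum equals the measure of the not-yet-returned set) and discrete differentiation in $n$, one reads off $\leb(\{\tau > n\}) \asymp n^{-4}$. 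Turning this heuristic into a rigorous two-sided bound with explicit logarithmic dependence requires controlling the drift of $E$ along true orbits (the source of the log factors) and proving bounded distortion on unstable curves inside $\{\tau > n\}$. Establishing the matching lower bound -- not available from the arguments of \cite{LM} -- is the technical heart of the paper; I would exploit the reversibility \eqref{eq:reversible} to symmetrize otherwise cumbersome computations.

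For step (iii), once the tail of $\tau$ is pinned down, the upper bound in \eqref{eq:sharp} follows from the operator renewal theory of \cite{BMT}: a Young-tower system with $\leb(\{\tau > n\}) \le C(\log n)^{a}/n^{q}$ has $\cC^{\eta}$ correlations decaying at rate $(\log n)^{a}/n^{q-1}$ for $\eta$ close to $1$, and the value $q=4$ yields the upper side of \eqref{eq:sharp} with the advertised $(\log n)^{4}$ factor. The improved bound $|\corr(\Phi,\Psi,n)| \le C_{\beta} n^{-\beta}$ with $\beta<4$ for zero-mean $\Phi$ comes from the next-order term in the renewal expansion: the leading contribution is proportional to $\int \Phi\, d\leb$ and therefore vanishes, while the remainder is one full power of $n$ smaller, giving essentially the rate $n^{-4}$ up to logarithms and hence $o(n^{-\beta})$ for every $\beta<4$. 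Finally, the lower bound in \eqref{eq:sharp} is proved by testing the renewal expansion against explicit observables $\Phi,\Psi \in \cC^{\eta}(\bT^{2})$ supported away from $\bp$, chosen so that the dynamical constant multiplying the leading term is non-zero; combined with the lower tail estimate $\leb(\{\tau > n\}) \ge c/((\log n) n^{4})$, this yields the advertised rate $((\log n) n^{3})^{-1}$.
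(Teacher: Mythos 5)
Your overall architecture (induce away from $\bp$, compute the return-time tail via the quartic energy $E=\tfrac12 y^2-\tfrac b4 x^4$, feed the tail into \cite{BMT}) is the same as the paper's, and your heuristic correctly identifies the exponent. But there are two genuine gaps. First, the tail estimate itself. The paper does not obtain $\leb(\{\tau>n\})$ by summing the measure of the not-yet-returned set and ``discretely differentiating'': knowing $\sum_{k\ge n}\leb(\{\tau>k\})\asymp n^{-3}$ does not determine $\leb(\{\tau>n\})$ without a priori regularity of the sequence, so that step as written does not close. Instead the paper computes the measure of each level set $\{\varphi_{0,1}=N\}\cap Q$ directly and gets a \emph{clean} $\asymp N^{-5}$ (\Cref{prop:tailmeas}), i.e.\ a log-free tail $\asymp n^{-4}$ for the first return. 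The mechanism is geometric: the set $\reg_k$ of points spending exactly $k$ steps left of the $y$-axis is bounded by two unstable curves, one the preimage of the other; a Gronwall estimate on the evolution of unstable slopes (\Cref{lem:vlength}) shows their vertical separation stays $\asymp k^{-3}$ across a horizontal extent $\asymp k^{-2}$, whence $\leb(\reg_k)\asymp k^{-5}$; reversibility then matches the ``incoming'' and ``outgoing'' halves of a transit so that $R_{2k},R_{2k+1}$ are comparable to $\reg_{k\pm k_*}$. None of this appears in your plan, and it is precisely the content that makes the bound two-sided and sharp.

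Second, you misplace the origin of the logarithms and skip the step that produces them. The first-return system $(T_1,Y)$ is not itself in the form required by \cite[Theorem~7.4]{BMT}: one must further induce $T_1$ to a two-sided Young tower with exponential tails (verifying (P1)--(P5) of \cite{You1} and the contraction condition \eqref{eq:sContractionRate}), and it is the composition of the two inducing times, via \cite[Proposition~5.1]{BMT}, that degrades the clean $n^{-4}$ tail of $\varphi_{0,1}$ to $C_1(\log n)^{-1}n^{-4}\le\leb(\varphi_{0,2}>n)\le C_2(\log n)^4 n^{-4}$ and hence produces the $(\log n)^{-1}$ and $(\log n)^4$ in \eqref{eq:sharp}. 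Your appeal to ``a quasi-compact transfer operator on a suitable Banach space'' from \cite{LM} does not substitute for this: \cite{LM} does not provide the product structure and distortion/holonomy hypotheses that \cite{BMT} needs, and attributing the logs to ``drift of $E$'' is incorrect --- the quasi-Hamiltonian drift is controlled with clean $\asymp$ bounds. Your treatment of the zero-mean case and of the lower bound is otherwise consistent with the paper's use of \cite[Theorem~7.4]{BMT} (note only that no special choice of observables is needed for the lower bound; the normalization $\int\Phi\,d\leb\int\Psi\,d\leb=1$ already makes the leading renewal term nonvanishing).
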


\begin{rem} The above result shows that, as suggested by some numerical experiments in \cite{AP}, the estimate in \cite{LM} was off by one full power. Moreover, \eqref{eq:sharp} shows that our estimate is essentially sharp.
\end{rem}  
To prove Theorem \ref{thm:main} it is necessary both a better understanding of the hyperbolic structure of the map and precise estimates on the behaviour of the map near its neutral fixed point.

The first is achieved by constructing a drastic refinement of the invariant cone field $K_0$:
There exists two constants $K_{+},K_{-} >0$ such that the unstable
direction $(1,u)$ at the point $(x,y)$ satisfies

\begin{equation} \label{eq:ucone}
K_{-}\left(\abs{x}+\sqrt{\abs{y}}\right)  \le u \le
K_{+}\left(\abs{x}+\sqrt{\abs{y}}\right).
\end{equation}
 Indeed, the unstable direction at $(x,y)=\xi$ must belong to the cone $D_{\map^{-k}\xi}\map^kK_0$, for each $k\in\bN$. But, using \eqref{eq:invmap}, $D_{\map^{-1}\xi}\map^1K_0$ is contained in the cone with boundary lines  $(1+h'(x-y), h'(x-y))$ and $(1,1)$, so provided $|x-y|\geq \delta$, for some fixed $\delta$, the claim is obvious for $K_+<1$ and $K_-$ small enough. On the other hand, if $|x-y|\leq \delta$, then the lower boundary of $D_{\map^{-2}\xi}\map^2K_0$ is above $(1+h'(x-2y+h(x-y)), h'(x-2y+h(x-y)))$. Since $|x-2y+h(x-y)|\geq |y|-(1+\norm{h'}_\infty)\delta$ we have again the claim provided $|y|\geq 2(1+\norm{h'}_\infty)\delta$ and $K_-$ is small enough. It remains to verify the statement in a $\delta$ neighborhood of zero, which is done in \cite[Proposition 4.1]{LM}.

A similar statement holds for the stable direction. 

As for the dynamics near zero, the first task is to understand the shape of the trajectories. This can be achieved with the introductions of an almost conserved quantity: a {\em quasi hamiltonian}.

For an initial point $(x,y)\in \bT^{2}$ and $n \in \bN$, denote
$(x_{n},y_{n})=\map^{n}(x,y)$. By \eqref{eq:map},
\begin{equation} \label{eq:yn} x_{n+1}-x_{n}=y_{n} + h(x_{n}) =
y_{n+1}.
\end{equation}
We define the quasi-Hamiltonian as
  \begin{equation} \label{eq:ham} 
  H(x,y)=\frac12 y^{2} - G(x)+\frac12
h(x)y-\frac1{12}h'(x)y^{2}+\frac1{12}h(x)^{2},
\end{equation} where $G(x) = \int_{0}^{x}h(z)\,dz$.  
 Note that $G\geq 0$, $H(0,0)=0$. A direct computation (if lazy see \cite[Footnote 5]{LM}) yields, for every $(x,y) \in \bT^{2}$,
\begin{equation} \label{eq:quasiham}
\abs{H(\map(x,y))-H(x,y)} \ll x^{8}+y^{4}.
\end{equation}

The dynamics along the trajctories is obviously dominated by dynamics on the stable and unstable manifolds of zero. The bounds on the cone filed (as well as \eqref{eq:ham}, \eqref{eq:quasiham}) imply that they look like parabolas.  The following Lemma corresponds to \cite[Lemmata 3.1, 3.2]{LM}.

\begin{lem}[Dynamics on the stable manifold] \label{lem:stabledyn}
Denote $A =  (2/b)^{1/2}$. Suppose $x_{0}\ge 0$ is sufficiently small. Then,
  there exists a trajectory
  $(x_{n},y_{n})=\map^{n}(x_{0},y_{0})$, $n \in \bN \cup \{0\}$, that
  satisfies $(x_{n})_{n}
  \in \ell^{2}(\bN)$ and
  \begin{equation} \label{eq:stabledyn}
\left|x_{n}-\frac{A}{n+A/x_{0}}\right| \le
\frac{B}{(n+A/x_{0})^{2}}, \text{ for some } B \text{ and all } n \in
\bN;
\end{equation} moreover, $y_{0}$ is a Lipschitz function of
$x_{0}$. These trajectories form the local stable manifold of
$\bp=(0,0)$.
\end{lem}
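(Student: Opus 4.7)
My plan is to realize the local stable manifold as a Lipschitz graph $y_0=\psi(x_0)$ and then solve the resulting scalar recurrence in $x_n$.

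To identify the leading profile of $\psi$, I would use the quasi-Hamiltonian from \eqref{eq:ham}. Any trajectory converging to $\bp$ satisfies $H(x_n,y_n)\to 0$, and along such a trajectory the one-step error $|H(\map(x_n,y_n))-H(x_n,y_n)|\ll x_n^8+y_n^4$ from \eqref{eq:quasiham} is summable (as verified \emph{a posteriori} from \eqref{eq:stabledyn}), forcing $H(x_0,y_0)=0$. Expanding \eqref{eq:ham} near the origin gives $H(x,y)=\tfrac12 y^2-\tfrac{b}{4}x^4+\cO(x^6+y^2|x|+|y|x^3)$, whose zero set locally splits into two smooth branches $y=\pm A^{-1}x^2+\cO(x^3)$. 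The stable branch is $y=-A^{-1}x^2+\cO(x^3)$, since $y_0>0$ would immediately give $x_1>x_0$.

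To construct the actual invariant graph, I would run a graph-transform argument on the space of Lipschitz functions $\psi:[0,\delta]\to\bR$ with $\psi(0)=0$ and $|\psi(x)+A^{-1}x^2|\le Cx^3$, using $\map^{-1}$ from \eqref{eq:invmap}. Although $D\map(\bp)$ is a non-diagonalisable shear with unit eigenvalue, hence only neutrally hyperbolic, the cone bound \eqref{eq:ucone} shows that the stable cone has opening $\sim|x|$ along the candidate profile; equivalently, in the rescaled coordinate $z=A/x$ the dynamics acts as a small perturbation of the unit shift $z\mapsto z+1$. This yields the contraction needed to produce a unique fixed point $\psi$, automatically Lipschitz in $x_0$. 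Alternatively one may directly quote \cite[Lemmata 3.1, 3.2]{LM}, since those already carry out an equivalent construction.

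Once $y_n=\psi(x_n)=-A^{-1}x_n^2+\cO(x_n^3)$ is substituted into \eqref{eq:map}, the $x$-recurrence reads
\begin{equation*}
x_{n+1}=x_n-A^{-1}x_n^{2}+\cO(x_n^{3}),
\end{equation*}
so setting $z_n=A/x_n$ gives $z_{n+1}=z_n+1+\cO(z_n^{-1})$. A telescoping sum yields $z_n=n+A/x_0+\cO(\log n)$, and a single bootstrap---reinserting this estimate into the remainder and summing---sharpens the error to $\cO(1)$, which is exactly \eqref{eq:stabledyn}; the $\ell^2$-summability of $(x_n)$ follows at once. The main obstacle is the middle step: building a Lipschitz invariant graph at a non-hyperbolic fixed point, where the usual Hadamard--Perron contraction degenerates. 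The trick is the rescaling $z=A/x$ dictated by the manifold's quadratic profile, which turns the dynamics into a small perturbation of a unit shift and restores the expansion needed for the graph transform, provided graph differences are measured in a correspondingly weighted sup-norm.
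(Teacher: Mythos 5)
The paper does not actually prove this lemma; it is imported verbatim from \cite[Lemmata 3.1, 3.2]{LM}, so the only ``official'' proof is that citation. Your overall architecture (invariant Lipschitz graph for the stable manifold, then the rescaling $z=A/x$ turning the dynamics into a perturbed unit shift) is the natural one and matches what is done in \cite{LM}. However, the final quantitative step of your argument has a genuine gap. From $y_n=\psi(x_n)=-A^{-1}x_n^2+\cO(x_n^3)$ you only obtain $x_{n+1}=x_n-A^{-1}x_n^2+\cO(x_n^3)$, hence $z_{n+1}=z_n+1+\cO(1/z_n)$ with a remainder that is genuinely of size $1/z_n$ and of uncontrolled sign. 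Since $\sum_{k<n}1/z_k\asymp \log(1+nx_0/A)$ diverges, no bootstrap can upgrade $z_n=n+A/x_0+\cO(\log(\cdot))$ to $z_n=n+A/x_0+\cO(1)$: re-inserting the improved estimate into $\sum_k\cO(1/z_k)$ still yields a logarithm. What your argument proves is $\bigl|x_n-A/(n+A/x_0)\bigr|\ll \log(\cdot)\,(n+A/x_0)^{-2}$, strictly weaker than \eqref{eq:stabledyn}. Indeed, for a generic recurrence $x_{n+1}=x_n-a x_n^2+\beta x_n^3+\cO(x_n^4)$ the coefficient of the $\log n/n^2$ correction is proportional to $a^2-\beta$, so the log-free bound asserted in the lemma holds only because of a cancellation your proof never verifies.

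To close the gap you must expand the invariant graph one order further. Writing $\psi(x)=-A^{-1}x^2+cx^3+\cO(x^4)$ and inserting this into the invariance equation $\psi\bigl(x+h(x)+\psi(x)\bigr)=h(x)+\psi(x)$, the $x^3$-coefficients reproduce $A^{-2}=b/2$, while the $x^4$-coefficients force $A^{-2}+2b+5c=0$, i.e.\ $c=-b/2$ (here the oddness of $h$, which kills the $x^4$ term of $h$, is used). Consequently the cubic coefficient of the $x$-recurrence is $\beta=b+c=b/2=A^{-2}$, exactly the value annihilating the $\cO(1/z_n)$ term: one gets $z_{n+1}=z_n+1+\cO(1/z_n^2)$, the correction series converges, and \eqref{eq:stabledyn} follows with $B=\cO(x_0)$. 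Two smaller points: summability of the one-step errors of $H$ gives $H(x_0,y_0)=\cO\bigl(\sum_k(x_k^8+y_k^4)\bigr)=\cO(x_0^7)$, not $H(x_0,y_0)=0$ (harmless for the quadratic profile, but as stated it is incorrect); and since the lemma's constant $B$ must be uniform in $n$, the logarithmic loss cannot be hidden in it.
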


\section{Markov partitions}
\label{sec:MP}
Using the symmetry (by which we mean reversiblility according to
\eqref{eq:reversible}), we can form a Markov
partition for $\map$ consisting of three elements, as shown in
\Cref{fig:MP}.

\begin{figure}[ht]
\centering
\includegraphics[width = 0.8\columnwidth, height = 0.8\columnwidth,
keepaspectratio]{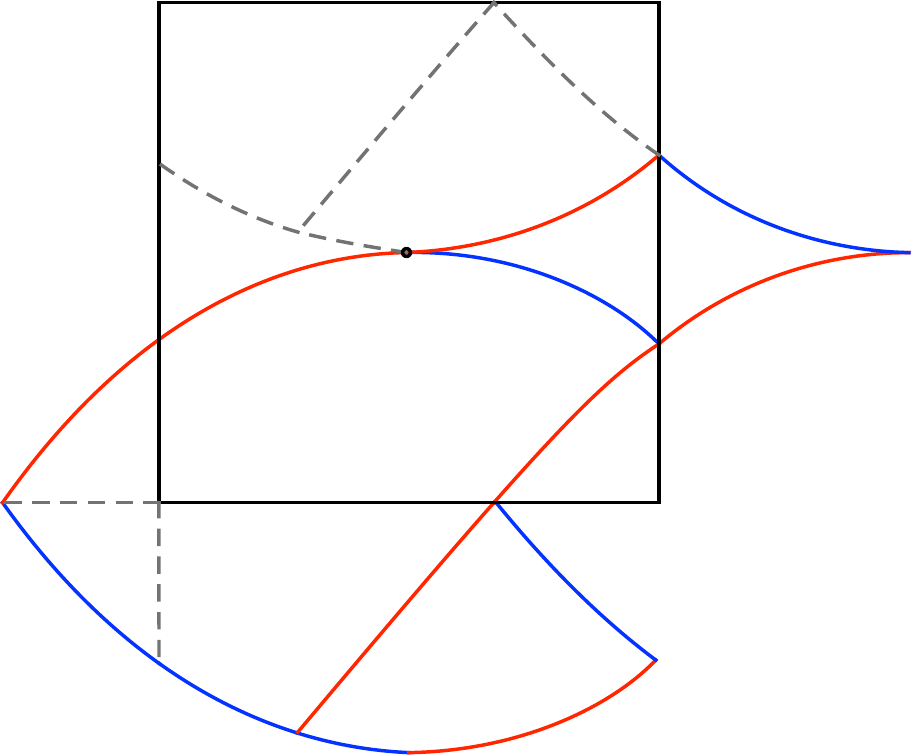}
\caption{Fundamental domain of the Markov partition consisting of
  three rectangles. Note that the regions sticking out of the square
  can be $\bZ^{2}$-translated back into the square to fill up the
  whole square. The blue (decreasing) and red (increasing) curves are pieces of the stable
  and unstable manifolds of $\bp$, respectively.}
\label{fig:MP}
\end{figure} 

 We want to induce on a set away from zero; however, all the above partition elements touch zero. Hence, we refine the original partition forwards and backwards $m$ times. 
We denote such a partition by $\cP$. Let $\pnbd$ denote the union of all the elements of $\cP$ touching $\bp$. By choosing $m$ large enough we ensure that $\pnbd$ is contained in a fixed but sufficiently small neighbourhood of $\bp$.

It is easy to see that $\pnbd$ is necessarily a union of four elements of $\cP$ each of which have two sides consisting of local stable and unstable manifolds of $\bp$. Since $\Pi_1$ maps the local unstable manifold of $\bp$ to its local stable manifold, by continuity, if $P_0$ is one of the three elements of the original Markov partition, then $\Pi_1 P_0 = P_0$. Now, suppose $P \in \cP$ is one of the four sets constituting $\pnbd$. It is of the form 
\begin{equation*}
  P=P_0 \cap T^{-1}P_1 \cap \dots \cap T^{-m}P_m\cap T \tilde P_1 \cap \dots \cap T^m \tilde P_m,
\end{equation*}
where each $P_j$ and $\tilde P_j$ is one of the three elements of the original Markov partition. Using the reversibility property of $\map$ with respect to $\Pi_1$, it follows that $\Pi_1 P$ is of the same form. Since $\Pi_1 P$ touches $\bp$ and is a member of $\cP$, it follows that it is one of the four elements constituting $\pnbd$. It follows that $\Pi_1 \pnbd = \pnbd$.

\section{First return map}
\label{sec:firstret} 
Denote $Y=\bT^{2}\setminus \pnbd$ and define $\rtzeroone:Y \to \bN$ to be the first return time to $Y$. The
first return map $\mapone:Y \circlearrowleft$ is then defined by
$\mapone=\map^{\rtzeroone}$. Note that $Y$ is the union of elements of $\cP \setminus
\{\pnbd\}$ and if we let $R_{\trn} = \{\rtzeroone = \trn\}$, then each $R_{\trn}$ is a union of elements of $\cP^{\trn}$. Let $\cP_1$ denote the partition of $Y$ whose elements are of the form $P \cap R_{\trn}$, where $P \in \cP\setminus \{\pnbd\}$ and $N \in \bN$. Notice that $\cP_1$ is a (countably infinite) Markov partition for $T_1$. 

Denote $\wholetail =
\map^{-1}\pnbd\setminus
\pnbd$. Then, $\wholetail = \bigcup_{\trn \ge 2} R_{\trn}$. Our aim
in this section is to estimate $\leb(R_{\trn})$ for large
$\trn$ (see \Cref{rem:largeN}).

For large $\trn$, $R_{\trn}$ consists of two parts one in the
second quadrant ($x\le 0 , y \ge 0$) and the other in the fourth
quadrant ($x\ge 0, y \le 0$). Due to the symmetry we focus on the
part of
$R_{\trn}$ contained in the second quadrant. We further consider two
cases one corresponding to the part above the stable manifold of
$\bp$ (fat region);  the other corresponding to the part below the
stable manifold of $\bp$ (thin region).

\subsection{Analysis in the fat region}
\label{subsec:fat} First we analyze the dynamics in the fat region. In
\Cref{subsec:thin} we do a similar analysis for the thin region.

Fix $M>0$ large (according to \Cref{lem:largeM}) and let
\begin{equation*}
  \bP_{M}=\{(x,y)\in \bT^{2} : Mx^{4} \le H(x,y), x\le 0, y \ge 0\}.
\end{equation*}

Due to the parabolic nature of stable and unstable manifolds \cite[Lemma~3.5]{LM}, in the fat region it holds $x^2 \ll y$. However, a better estimate holds for $(x,y) \in \bP_M$ by taking $M$ large.

\begin{lem}\label{lem:xy}  For all $(x,y) \in \bP_{M}$,
  \begin{equation} \label{eq:xy} 
    x^{2} \ll (1/M)y.
  \end{equation}
\end{lem}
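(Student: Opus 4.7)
The plan is a direct computation using the explicit formula for $H$ combined with sign bookkeeping on $\bP_M$. Starting from
\[
H(x,y)=\tfrac12 y^{2} - G(x)+\tfrac12 h(x)y-\tfrac1{12}h'(x)y^{2}+\tfrac1{12}h(x)^{2},
\]
I would examine each term on $\bP_M$, where $x\le 0$ and $y\ge 0$. Since $h$ is odd with $h(x)=bx^{3}+\cO(x^{5})$ near zero, $h(x)\le 0$ on a small left neighborhood of $0$, so $\tfrac12 h(x)y\le 0$. By condition~(3), $h'(x)\ge 0$ globally, so $-\tfrac1{12}h'(x)y^{2}\le 0$. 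By oddness of $h$, $G(x)=\int_{0}^{x}h\ge 0$, so $-G(x)\le 0$. The only positive contributions to $H$ are therefore $\tfrac12 y^{2}$ and $\tfrac1{12}h(x)^{2}=\cO(x^{6})$.

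Combined with the defining inequality $Mx^{4}\le H(x,y)$ this gives
\[
Mx^{4}\le \tfrac12 y^{2}+Cx^{6}
\]
for some constant $C$ independent of $M$. Since $M$ is chosen first and the refinement parameter $m$ of the Markov partition $\cP$ is then chosen large, we may assume that $\bP_M$ lies in an arbitrarily small neighborhood of $\bp$, in particular small enough that $Cx^{2}\le M/2$; the error $Cx^{6}=(Cx^{2})x^{4}\le \tfrac{M}{2}x^{4}$ then absorbs into the left-hand side, yielding $\tfrac{M}{2}x^{4}\le \tfrac12 y^{2}$, i.e.\ $x^{2}\le y/\sqrt{M}$. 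Taking $M$ large, this is precisely the advertised bound $x^{2}\ll (1/M)y$.

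No serious obstacle is expected: the argument amounts to recording that every contribution to $H$ besides the dominant $\tfrac12 y^{2}$ is either non-positive on $\bP_M$ or of order $x^{6}$, and that the quartic prefactor $M$ in the definition of $\bP_M$ dominates this error once the relevant neighborhood of $\bp$ is small. The only delicate point is the order of choices: $M$ must be fixed before $m$, so that $\pnbd$ (and hence $\bP_M$) can be squeezed into a neighborhood where the $\cO(x^{6})$ error is swamped by $Mx^{4}$.
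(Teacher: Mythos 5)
Your proof is correct and follows essentially the same route as the paper's: both drop the non-positive terms of \eqref{eq:ham} to get $H\le \tfrac12 y^{2}+\cO(h(x)^{2})$ on $\bP_M$ and then combine this with the defining inequality $Mx^{4}\le H$ (the paper disposes of the $\cO(x^{6})$ term via the a priori fat-region bound $x^{2}\ll y$, you absorb it into $Mx^{4}$ for $x$ small — an immaterial difference). Note only that your last step, exactly like the paper's, actually yields $x^{2}\ll M^{-1/2}y$ rather than $M^{-1}y$; this is equally sufficient for the subsequent use in \Cref{lem:largeM}, where any negative power of $M$ does the job.
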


\begin{proof} From \eqref{eq:ham}, $H(x,y) \le (1/2)y^2+(1/2)h(x)^2 \ll y^2$. Also, by assumption, $Mx^{4} \le H(x,y)$. The result follows.
\end{proof}

\begin{lem} \label{lem:largeM} For $M$ sufficiently large and $(x,y) \in \bP_{M}$,
    \begin{equation} \label{eq:Hest}  
        H(x,y) \asymp y^{2}.
    \end{equation}
  \end{lem}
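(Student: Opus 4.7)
The plan is to use Lemma~\ref{lem:xy} to control every term of $H(x,y)$ other than the leading $\tfrac12 y^2$, and to show that in $\bP_M$ each such correction is bounded by $C/M$ times $y^2$. Taking $M$ large then forces $H(x,y) = \tfrac12 y^2 (1+o(1))$, which is exactly $H\asymp y^2$.

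More concretely, I would first record the local Taylor behavior near $\bp$: from $h(x)=bx^3+\cO(x^5)$ we get $h'(x)=\cO(x^2)$, $h(x)^2 = \cO(x^6)$, and $G(x)=\tfrac{b}{4}x^4+\cO(x^6)=\cO(x^4)$. Recall also that $\bP_M\subset\pnbd$ lies in a small neighbourhood of $\bp$, so both $|x|$ and $|y|$ can be assumed smaller than any prescribed $\delta$. Using Lemma~\ref{lem:xy}, i.e.\ $x^2\ll y/M$, I would then estimate term by term in
\begin{equation*}
H(x,y)-\tfrac12 y^2 = -G(x) + \tfrac12 h(x) y - \tfrac{1}{12} h'(x) y^2 + \tfrac{1}{12} h(x)^2,
\end{equation*}
as follows: $|G(x)|\ll x^4 \ll y^2/M^2$; $|h(x)y|\ll |x|^3 y = |x|\cdot x^2\cdot y \ll |x|\cdot y^2/M \ll y^2/M$ (since $|x|$ is small); $|h'(x)|y^2 \ll x^2 y^2 \ll y^3/M \ll y^2/M$ (since $|y|$ is small); and $h(x)^2 \ll x^6 \ll y^3/M^3 \ll y^2/M^3$. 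Summing, $|H(x,y)-\tfrac12 y^2|\le C M^{-1} y^2$ for an absolute constant $C$.

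Choosing $M$ large enough that $C M^{-1} < \tfrac14$ yields both bounds simultaneously,
\begin{equation*}
\tfrac14 y^2 \le H(x,y) \le \tfrac34 y^2,
\end{equation*}
which is the desired $H(x,y)\asymp y^2$. The only mild subtlety is the cross term $\tfrac12 h(x)y$, which by itself could a priori be of the same order as $y^2$; the point is that $x^2\ll y/M$ pays for one factor of $x^2$, leaving a residual factor of $|x|/M$ that is genuinely small in $\bP_M$. Everything else is a routine consequence of the local expansion of $h$ and the smallness of the neighbourhood $\pnbd$, so I do not expect any real obstacle beyond bookkeeping.
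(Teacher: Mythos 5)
Your argument is correct and follows essentially the same route as the paper: both use the explicit form \eqref{eq:ham} of $H$ together with Lemma~\ref{lem:xy} to show the correction terms are $\cO(M^{-1})y^2$, the upper bound being exactly the one already obtained in the proof of Lemma~\ref{lem:xy}. Your version is slightly more explicit (a two-sided term-by-term estimate giving $H=\tfrac12 y^2(1+\cO(1/M))$), but there is no substantive difference.
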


\begin{proof} By \eqref{eq:ham}, $H(x,y) \ge (1/2)y^2-G(x)+(1/2)h(x)y-(1/12)h'(x)y^2$. By \eqref{eq:xy} this expression is $\gg (1/2)M^2x^4 - x^4$. Choosing $M$ sufficiently large implies the lower bound in \eqref{eq:Hest}. The upperbound was shown in the proof of \Cref{lem:xy}. 
\end{proof}

\begin{lem} For $(x,y) \in \bP_{M}$,
  \begin{equation} \label{eq:sqrtHdiff}
    \abs{H^{1/2} \circ \map (x,y)-H^{1/2}(x,y)} \ll y^{3}.
  \end{equation}
\end{lem}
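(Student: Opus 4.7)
The natural route is to linearize the square root via the identity $a - b = (a^2-b^2)/(a+b)$, valid whenever $a+b>0$. Applied to our situation this gives
\begin{equation*}
H^{1/2}\!\circ\! \map(x,y)-H^{1/2}(x,y)
=\frac{H\!\circ\! \map(x,y)-H(x,y)}{H^{1/2}\!\circ\! \map(x,y)+H^{1/2}(x,y)},
\end{equation*}
so it is enough to bound the numerator by a constant times $y^{4}$ and the denominator from below by a constant times $y$.

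For the numerator, the quasi-conservation estimate \eqref{eq:quasiham} already delivers a bound of the form $x^{8}+y^{4}$. The hypothesis $(x,y)\in\bP_M$ combined with \Cref{lem:xy} yields $x^{2}\ll y$, hence $x^{8}\ll y^{4}$, and the numerator is $\ll y^{4}$ as required.

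For the denominator, \Cref{lem:largeM} gives $H^{1/2}(x,y)\asymp y$, in particular $H^{1/2}(x,y)\gtrsim y$. The only point that requires a brief comment is that $H^{1/2}\circ \map(x,y)$ must be real and non-negative, i.e.\ $H\circ \map(x,y)\ge 0$. This follows from the definition of $\bP_M$, which forces $H(x,y)\ge Mx^{4}$ and $H(x,y)\asymp y^{2}$, together with the already-established bound $|H\circ \map-H|\ll y^{4}$: for $(x,y)$ in a sufficiently small neighborhood of $\bp$, this increment is dominated by $H(x,y)\asymp y^{2}$, so $H\circ \map(x,y)$ remains non-negative. Thus $H^{1/2}\circ \map(x,y)+H^{1/2}(x,y)\gtrsim y$, and dividing the numerator bound by this lower bound yields $y^{3}$, as desired.

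I do not anticipate any serious obstacle: the statement is essentially a one-line consequence of the square-root trick, combined with \eqref{eq:quasiham} and the two preceding lemmas (\Cref{lem:xy} and \Cref{lem:largeM}). The only item worth articulating explicitly is the non-negativity of $H$ along one iterate, which is automatic once $M$ is large and the neighborhood of $\bp$ is chosen small enough.
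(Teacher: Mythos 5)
Your proposal is correct and follows essentially the same route as the paper: the identity $a-b=(a^2-b^2)/(a+b)$, the bound $\abs{H\circ\map - H}\ll x^8+y^4\ll y^4$ via \eqref{eq:quasiham} and \Cref{lem:xy}, and the lower bound $H^{1/2}\gg y$ from \Cref{lem:largeM}. The only difference is that you explicitly verify the non-negativity of $H\circ\map$, a point the paper leaves implicit.
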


\begin{proof} By \eqref{eq:Hest} and \eqref{eq:quasiham} for $(x,y)
  \in \bP_{M}$,
  \begin{equation*}
    \begin{split}
      \abs{H^{1/2}\circ \map (x,y)-H^{1/2} (x,y)} &= \frac{\abs{ H \circ
          \map (x,y)-H(x,y)}}{H^{1/2} \circ \map (x,y)+H^{1/2}(x,y)}\\ &\ll
          \frac{\abs{ H \circ \map (x,y)-H(x,y)}}{y} \\ &\ll
          \frac{x^{8}+y^{4}}{y} \ll y^{3}.
    \end{split}
  \end{equation*}
\end{proof}

\begin{defin} \label{def:nell} Given an initial point $(x,y) \in
\wholetail$ let
  $E_{k} = E_{k}(x,y) = H(x_{k}, y_{k})$,
  \begin{equation*}n = \max{\{k \in \bN: x_{k} \le 0\}}, \quad \ell=\ell(x,y) =
    \min\{k \le n : (x_{k}, y_{k}) \in \bP_{M}\}.\end{equation*}
\end{defin}

\begin{rem}
    Note that in the estimates below both sides of $\ll$ are functions of the initial point $(x,y)$ so the constants hidden in the notation $\ll$ do not depend on $k$.
\end{rem}
\begin{lem} For all $\ell \le k \le n$,
  \begin{equation} \label{eq:Esqrt_ch}
    \abs{E_{k}^{1/2}-E_{\ell}^{1/2}} =
    \abs{H^{1/2}(x_{k},y_{k})-H^{1/2}(x_{\ell},y_{\ell})} \ll
    \abs{y_{\ell}}^{2}\abs{x_{\ell}}
  \end{equation}
\end{lem}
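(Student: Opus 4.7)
The plan is to sum the one-step estimate \eqref{eq:sqrtHdiff} telescopically and bound the resulting sum using the monotonicity of $x_j$ in the fat region. If $k=\ell$ the inequality is trivial, so assume $k>\ell$. Granted that $(x_j,y_j)\in\bP_M$ for every $\ell\le j\le k-1$, the triangle inequality combined with \eqref{eq:sqrtHdiff} gives
\begin{equation*}
    \abs{E_k^{1/2}-E_\ell^{1/2}} \le \sum_{j=\ell}^{k-1}\abs{H^{1/2}(x_{j+1},y_{j+1})-H^{1/2}(x_j,y_j)} \ll \sum_{j=\ell}^{k-1} y_j^3.
\end{equation*}
The bulk of the work is then to bound this last sum by $y_\ell^2\abs{x_\ell}$.

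The key geometric input is that, in the fat region above the stable manifold, $y_j>0$, so $x_{j+1}=x_j+y_{j+1}$ is strictly increasing in $j$ while remaining $\le 0$ for $j\le n$. Telescoping,
\begin{equation*}
    \sum_{j=\ell+1}^{k} y_j = x_k-x_\ell \le \abs{x_\ell}.
\end{equation*}
In particular $y_{\ell+1}\le\abs{x_\ell}$; combined with $y_\ell-y_{\ell+1}=-h(x_\ell)=\cO(\abs{x_\ell}^3)$, this yields $y_\ell\ll\abs{x_\ell}$. Therefore, granted an a priori bound $y_j\le C y_\ell$ for $\ell\le j\le k-1$,
\begin{equation*}
    \sum_{j=\ell}^{k-1} y_j^3 \le C^2 y_\ell^2 \sum_{j=\ell}^{k-1} y_j \le C^2 y_\ell^2(y_\ell+\abs{x_\ell}) \ll y_\ell^2\abs{x_\ell},
\end{equation*}
which is exactly the desired bound.

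To close the argument I would justify the a priori bound $y_j\le C y_\ell$ (and the containment $(x_j,y_j)\in\bP_M$) by a standard bootstrap. Let $k^*=\max\{k\le n : y_j\le K y_\ell\text{ for all }\ell\le j\le k\}$ for a constant $K$ chosen in terms of the implicit constants in Lemma~\ref{lem:largeM}. On $[\ell,k^*]$, Lemma~\ref{lem:largeM} gives $y_j\asymp E_j^{1/2}$, and the monotonicity of $\abs{x_j}$ combined with $E_j\asymp E_\ell$ preserves the membership $(x_j,y_j)\in\bP_M$. Running the estimate of the previous paragraph through $k^*+1\le n$ yields $\abs{E_{k^*+1}^{1/2}-E_\ell^{1/2}}\ll y_\ell^2\abs{x_\ell}$, and since $E_\ell^{1/2}\asymp y_\ell$ while $y_\ell\abs{x_\ell}$ can be made arbitrarily small by restricting to a sufficiently small neighborhood of $\bp$, this forces $y_{k^*+1}<Ky_\ell$, contradicting $k^*<n$. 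The main obstacle is the self-referential character of this bootstrap — the bound being proved is precisely what is needed to run it — but it resolves because the correction $y_\ell\abs{x_\ell}$ is negligible compared to $y_\ell$ near the neutral fixed point.
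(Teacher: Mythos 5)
Your proof follows the paper's argument exactly: telescope the one-step estimate \eqref{eq:sqrtHdiff} and bound $\sum_{j=\ell}^{k-1} y_j^{3} \le \bigl(\max_{\ell\le j\le k-1} y_j\bigr)^{2}\sum_{j} y_j \ll y_\ell^{2}\abs{x_\ell}$ using the telescoping identity $y_{j+1}=x_{j+1}-x_j$. The only real difference is that your bootstrap for $y_j \le C y_\ell$ is superfluous: since $x_j\le 0$ for $j\le n$ and $h(x)<0$ for $x<0$ near the origin, the recursion $y_{j+1}=y_j+h(x_j)$ shows $(y_j)$ is decreasing on $[\ell,n]$, so $y_j\le y_\ell$ outright and the self-referential continuity argument can be dropped.
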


\begin{proof} By \eqref{eq:sqrtHdiff},
  \begin{equation} \label{eq:sqrtEdiff}
    \begin{split}
      \abs{E_{k}^{1/2}-E_{\ell}^{1/2}} &=
      \abs{H^{1/2}(x_{k},y_{k})-H^{1/2}(x_{\ell},y_{\ell})} \\ &\ll
      \sum_{j=\ell}^{k-1}\abs{y_{j}}^{3} \ll
      \abs{y_{\ell}}^{2}\sum_{j=\ell}^{k-1}(x_{j}-x_{j-1}) \ll
      \abs{y_{\ell}}^{2}\abs{x_{\ell}}.
    \end{split}
  \end{equation}
\end{proof}

\begin{rem} \label{rem:xkEk} If $(x_{k},y_{k}) \in \bP_{M}$, then
  by definition of $\bP_M$ and \eqref{eq:Hest},
  \begin{equation} \label{eq:ykEk} 
    \abs{x_{k}} \le M^{-1/4}E_{k}^{1/4}, \text{ and } y_{k} \asymp E_{k}^{1/2}.
  \end{equation}
\end{rem}

\begin{lem} \label{lem:precision} For $\ell \le k \le n$,
  \begin{equation} \label{eq:precision} 
    y_k \asymp E_\ell^{1/2}.
  \end{equation}
\end{lem}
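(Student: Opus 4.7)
The plan is to combine \eqref{eq:Esqrt_ch}, Remark~\ref{rem:xkEk}, and the explicit expansion \eqref{eq:ham}, and extract the claim in two stages.

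\emph{Stage 1} ($E_k\asymp E_\ell$): Substituting $y_\ell^2\asymp E_\ell$ and $|x_\ell|\ll E_\ell^{1/4}$ from Remark~\ref{rem:xkEk} into \eqref{eq:Esqrt_ch} yields
\[
\bigl|E_k^{1/2}-E_\ell^{1/2}\bigr|\ll E_\ell\cdot E_\ell^{1/4}=E_\ell^{3/4}\cdot E_\ell^{1/2}.
\]
Since $\pnbd$ lies in an arbitrarily small neighborhood of $\bp$, $E_\ell$ can be made as small as we like, so $E_\ell^{3/4}\ll 1$ and $E_k^{1/2}\asymp E_\ell^{1/2}$, i.e., $E_k\asymp E_\ell$ uniformly in $\ell\le k\le n$.

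\emph{Stage 2} ($y_k\asymp E_\ell^{1/2}$): I would argue by induction on $k$ that $|x_k|\le|x_\ell|$ and $y_k\asymp E_\ell^{1/2}$ simultaneously. The base case $k=\ell$ is exactly Remark~\ref{rem:xkEk}. For the inductive step, the estimate $|h(x_k)|\ll|x_k|^3\ll E_\ell^{3/4}\ll E_\ell^{1/2}\asymp y_k$ combined with the dynamics $y_{k+1}=y_k+h(x_k)$ and $x_{k+1}-x_k=y_{k+1}$ shows $y_{k+1}>0$ and $x_{k+1}>x_k$; since $x_{k+1}\le 0$ (because $k+1\le n$), we get $|x_{k+1}|\le|x_k|\ll E_\ell^{1/4}$ together with the a priori upper bound $y_{k+1}\ll E_\ell^{1/2}$. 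Rearranging \eqref{eq:ham},
\[
\tfrac{1}{2}y_{k+1}^2=E_{k+1}+G(x_{k+1})-\tfrac{1}{2}h(x_{k+1})y_{k+1}+\tfrac{1}{12}h'(x_{k+1})y_{k+1}^2-\tfrac{1}{12}h(x_{k+1})^2,
\]
every correction on the right is a positive power of $|x_{k+1}|$ times a factor $\ll E_\ell$ (using the a priori upper bound on $y_{k+1}$); for $M$ large and $\pnbd$ small each correction is negligible compared to $E_{k+1}\asymp E_\ell$, so $y_{k+1}^2\asymp E_{k+1}\asymp E_\ell$ and the induction closes.

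The subtle point is the mild circularity in Stage~2: monotonicity of $|x_k|$ needs the lower bound $y_k\gg |h(x_k)|$, while that lower bound is itself derived from $|x_k|$ being small. The simultaneous induction resolves this, but it works uniformly in $k$ only because Stage~1 has already replaced any pointwise bound by the global comparison $E_k\asymp E_\ell$; without this, a small multiplicative loss per iterate would accumulate over the (potentially very long) number of steps $n-\ell$ and destroy the estimate.
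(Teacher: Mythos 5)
Your proof is correct, and it rests on the same two ingredients as the paper's one-line argument: the drift bound \eqref{eq:Esqrt_ch} together with $y_\ell\asymp E_\ell^{1/2}$ and $|x_\ell|\ll E_\ell^{1/4}$ from \eqref{eq:ykEk}. But you organize them differently. The paper writes $y_k=y_\ell-(y_\ell-y_k)$ and treats $y_\ell-y_k$ as a small perturbation of $y_\ell$, implicitly relying on the monotonicity of the orbit in the second quadrant ($x_\ell\le x_k\le 0$, hence $y_{k+1}=y_k+h(x_k)\le y_k$). You instead first upgrade \eqref{eq:Esqrt_ch} to the uniform comparison $E_k\asymp E_\ell$ and then invert the Hamiltonian \eqref{eq:ham} at time $k$, in effect re-proving \Cref{lem:largeM} at $(x_k,y_k)$ even though that point need not lie in $\bP_M$ (which is why you correctly substitute the inductively maintained bounds $|x_k|\le|x_\ell|$ and $y_k\ll E_\ell^{1/2}$ for membership in $\bP_M$). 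The simultaneous induction is the right device: at each step the two-sided bound on $y_{k+1}$ is re-derived from the $k$-uniform comparison $E_{k+1}\asymp E_\ell$ rather than from the constants at step $k$, so nothing accumulates over the $n-\ell$ iterates, and the induction hypothesis is only consumed to get the sign information $y_{k+1}>0$, $x_k\le x_{k+1}\le 0$. The price is length; what you gain over the paper's terse proof is a self-contained argument that makes explicit where the largeness of $M$ (to dominate the $G(x_{k+1})\le E_\ell/M$ correction) and the smallness of $\pnbd$ (to make $E_\ell^{1/4}\ll 1$) are actually used.
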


\begin{proof} Write
  $y_{k} = y_{\ell} - (y_{\ell}-y_{k})$ and apply
  \eqref{eq:Esqrt_ch} and \eqref{eq:ykEk}. 
\end{proof}

\begin{lem}
  \begin{equation} \label{eq:ellminusone} E_{\ell-1}^{1/4}-
    E_{\ell}^{1/4} \ll E_{\ell}^{1/2}.
  \end{equation}
\end{lem}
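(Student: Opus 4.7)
The strategy is to exploit the defining property of $\ell$: since $(x_{\ell-1},y_{\ell-1}) \notin \bP_{M}$, one has $E_{\ell-1}=H(x_{\ell-1},y_{\ell-1}) < M x_{\ell-1}^{4}$, hence
\[
E_{\ell-1}^{1/4} \le M^{1/4}\,\abs{x_{\ell-1}}.
\]
The task therefore reduces to controlling $\abs{x_{\ell-1}}$ in terms of $E_{\ell}^{1/4}$. By \eqref{eq:yn}, $x_{\ell-1}=x_{\ell}-y_{\ell}$; since $(x_{\ell},y_{\ell}) \in \bP_{M}$ we have $x_{\ell} \le 0$ and $y_{\ell} \ge 0$, so $\abs{x_{\ell-1}}=\abs{x_{\ell}}+y_{\ell}$. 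Feeding in the two estimates from Remark~\ref{rem:xkEk}, namely $\abs{x_{\ell}} \le M^{-1/4}E_{\ell}^{1/4}$ and $y_{\ell} \asymp E_{\ell}^{1/2}$, produces
\[
E_{\ell-1}^{1/4} \le M^{1/4}\,\abs{x_{\ell-1}} \le E_{\ell}^{1/4}+M^{1/4}C\,E_{\ell}^{1/2},
\]
which already gives the ``upper'' direction $E_{\ell-1}^{1/4}-E_{\ell}^{1/4} \ll E_{\ell}^{1/2}$.

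To obtain the matching ``lower'' direction $E_{\ell}^{1/4}-E_{\ell-1}^{1/4} \ll E_{\ell}^{1/2}$, I would invoke the single-step quasi-conservation \eqref{eq:quasiham}: $\abs{E_{\ell}-E_{\ell-1}} \ll x_{\ell-1}^{8}+y_{\ell-1}^{4}$. The bound on $\abs{x_{\ell-1}}$ just established yields $x_{\ell-1}^{8} \ll E_{\ell}^{2}$, while the relation $y_{\ell-1}=y_{\ell}-h(x_{\ell-1})$ together with $h(x)=\cO(x^{3})$ near zero produces $\abs{y_{\ell-1}} \ll E_{\ell}^{1/2}$ and hence $y_{\ell-1}^{4} \ll E_{\ell}^{2}$. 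Thus $\abs{E_{\ell}-E_{\ell-1}} \ll E_{\ell}^{2}$, which for $E_{\ell}$ small forces $E_{\ell-1} \asymp E_{\ell}$; the algebraic identity $a^{1/4}-b^{1/4}=(a-b)/\bigl[(a^{1/4}+b^{1/4})(a^{1/2}+b^{1/2})\bigr]$ with denominator bounded below by $E_{\ell}^{3/4}$ then in fact yields the much stronger bound $\abs{E_{\ell-1}^{1/4}-E_{\ell}^{1/4}} \ll E_{\ell}^{5/4}$.

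The main point that requires any care is the edge case $\ell=1$: then $(x_{\ell-1},y_{\ell-1})=(x_{0},y_{0})$ lies in $\wholetail$ rather than deep inside $\pnbd$, so the smallness of $\abs{x_{\ell-1}}$ and $\abs{y_{\ell-1}}$ used in the second step is not tautological. This is however already ensured by choosing $m$ large enough in Section~\ref{sec:MP}, so that $\pnbd$, and with it the neighbourhood of $\bp$ in which $\wholetail$ sits, is as small as required for all the $\cO$-estimates above to hold with the uniform constants hidden in the $\ll$ notation.
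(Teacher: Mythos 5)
Your first paragraph is exactly the paper's argument: from $(x_{\ell-1},y_{\ell-1})\notin\bP_M$ and $(x_\ell,y_\ell)\in\bP_M$ one gets $M^{-1/4}E_{\ell-1}^{1/4}\le -x_{\ell-1}$ and $-x_\ell\le M^{-1/4}E_\ell^{1/4}$, and subtracting gives $M^{-1/4}\bigl(E_{\ell-1}^{1/4}-E_\ell^{1/4}\bigr)\le -x_{\ell-1}+x_\ell=y_\ell\ll E_\ell^{1/2}$ by \eqref{eq:yn} and \eqref{eq:ykEk}. So for that direction you and the paper coincide.

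Your second paragraph goes beyond what the paper writes down, and usefully so. The paper's proof of \eqref{eq:ellminusone} only establishes the upper bound on $E_{\ell-1}^{1/4}-E_\ell^{1/4}$, whereas the way the lemma is invoked in \Cref{lem:timeout} and \Cref{lem:timein} is through the \emph{lower} bound $E_{\ell-1}^{1/4}\ge E_\ell^{1/4}-\cO(E_\ell^{1/2})$, i.e.\ the reverse inequality (which the statement does cover under the convention $f\ll g\Leftrightarrow|f|\le C|g|$, but the written proof does not). Your derivation of that direction is correct: $|x_{\ell-1}|\ll E_\ell^{1/4}$ and $|y_{\ell-1}|\ll E_\ell^{1/2}$ follow from the first part, so \eqref{eq:quasiham} gives $|E_{\ell-1}-E_\ell|\ll E_\ell^{2}$ and hence $|E_{\ell-1}^{1/4}-E_\ell^{1/4}|\ll E_\ell^{5/4}$, far better than needed. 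The caveat about $\ell=1$ is harmless but not really an issue, since the bounds $|x_{\ell-1}|\ll E_\ell^{1/4}$, $|y_{\ell-1}|\ll E_\ell^{1/2}$ are derived from $(x_\ell,y_\ell)\in\bP_M\subset\pnbd$ rather than assumed, and \eqref{eq:quasiham} holds on all of the relevant neighbourhood. In short: correct, with the first half identical to the paper and the second half supplying a step the paper leaves implicit.
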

\begin{proof} By definition of $\ell$,
  $M^{-1/4}E_{\ell-1}^{1/4} \le -x_{\ell-1} $ and
  $M^{-1/4}E_{\ell}^{1/4} \ge -x_{\ell}$. Therefore, using \eqref{eq:yn} and \eqref{eq:ykEk},
  \begin{equation*} M^{-1/4}E_{\ell-1}^{1/4} - M^{-1/4}E_{\ell}^{1/4} \le
    -x_{\ell-1}+x_{\ell} = y_{\ell} \ll E_{\ell}^{1/2}.
  \end{equation*}
\end{proof}

Now we relate the time and the energy.
\begin{lem} \label{lem:timeout}
  $ n-\ell \asymp E_{\ell}^{-1/4} $.
\end{lem}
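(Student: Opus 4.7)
The plan is to convert the estimate of $n-\ell$ into one for the first-coordinate displacement along the orbit. By \eqref{eq:yn}, telescoping gives
\begin{equation*}
x_n-x_\ell=\sum_{k=\ell}^{n-1}(x_{k+1}-x_k)=\sum_{j=\ell+1}^{n}y_j,
\end{equation*}
and by Lemma \ref{lem:precision} each term satisfies $y_j\asymp E_\ell^{1/2}$. Since $x_\ell\le x_n\le 0$ throughout the relevant portion of the second quadrant, this rewrites as
\begin{equation*}
|x_\ell|-|x_n|\;\asymp\;(n-\ell)\,E_\ell^{1/2},
\end{equation*}
so proving $n-\ell\asymp E_\ell^{-1/4}$ reduces to showing $|x_\ell|-|x_n|\asymp E_\ell^{1/4}$.

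The upper bound is immediate from Remark \ref{rem:xkEk}: $|x_\ell|\le M^{-1/4}E_\ell^{1/4}$, hence $|x_\ell|-|x_n|\ll E_\ell^{1/4}$ and $n-\ell\ll E_\ell^{-1/4}$.

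For the matching lower bound I would control $|x_n|$ and $|x_\ell|$ separately. First, $x_n\le 0<x_{n+1}=x_n+y_{n+1}$ gives $|x_n|\le y_{n+1}$; by \eqref{eq:yn}, $y_{n+1}=y_n+h(x_n)$, and Lemma \ref{lem:precision} together with $|h(x_n)|\ll|x_\ell|^3\ll E_\ell^{3/4}$ (using that $|x_n|\le|x_\ell|$ since $x$ is monotonically increasing from $\ell$ to $n$) yield $y_{n+1}\asymp E_\ell^{1/2}$. Hence $|x_n|\ll E_\ell^{1/2}\ll E_\ell^{1/4}$ for small $E_\ell$. Second, since $(x_{\ell-1},y_{\ell-1})\notin\bP_M$, the definition of $\bP_M$ gives $|x_{\ell-1}|>M^{-1/4}E_{\ell-1}^{1/4}$, and $x_\ell=x_{\ell-1}+y_\ell$ with $y_\ell\asymp E_\ell^{1/2}$ yields
\begin{equation*}
|x_\ell|=|x_{\ell-1}|-y_\ell\ge M^{-1/4}E_{\ell-1}^{1/4}-C\,E_\ell^{1/2}.
\end{equation*}
To conclude $|x_\ell|\gg E_\ell^{1/4}$ it remains to show $E_{\ell-1}\asymp E_\ell$: the fat-region bound $x^2\ll y$ applied at $(x_{\ell-1},y_{\ell-1})$ forces $|h(x_{\ell-1})|\ll|x_{\ell-1}|\,y_{\ell-1}$, whence $y_{\ell-1}\asymp y_\ell\asymp E_\ell^{1/2}$; then the quasi-conservation \eqref{eq:quasiham} gives $|E_\ell-E_{\ell-1}|\ll x_{\ell-1}^8+y_{\ell-1}^4\ll E_\ell^2$, which is negligible compared with $E_\ell$.

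The delicate step is this last one: establishing $E_{\ell-1}\asymp E_\ell$ at a point just outside $\bP_M$, where the cleaner estimate \eqref{eq:sqrtHdiff} is not directly applicable and one must fall back on \eqref{eq:quasiham} together with the fat-region constraint $x^2\ll y$ to control $y_{\ell-1}$. Once this comparability is in hand, the remainder of the argument is routine bookkeeping.
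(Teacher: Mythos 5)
Your proof is correct and follows essentially the same route as the paper's: both convert $n-\ell$ into the $x$-displacement $\sum_k y_k$ with $y_k\asymp E_\ell^{1/2}$ (\Cref{lem:precision}) and pin that displacement to $E_\ell^{1/4}$ using membership of $(x_\ell,y_\ell)$ in $\bP_M$ and non-membership of $(x_{\ell-1},y_{\ell-1})$. The only organizational difference is that the paper telescopes from $\ell-1$ to $n+1$ (so that $x_{n+1}\ge 0$ makes any bound on $|x_n|$ unnecessary) and quotes the already-established \eqref{eq:ellminusone} to get $E_{\ell-1}^{1/4}\gg E_\ell^{1/4}$, whereas you bound $|x_n|$ separately and re-derive the comparability of $E_{\ell-1}$ and $E_\ell$ from \eqref{eq:quasiham} together with the fat-region constraint $x^2\ll y$ --- a correct but longer detour, since \eqref{eq:ellminusone} obtains it in one line from $x_\ell-x_{\ell-1}=y_\ell$.
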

\begin{proof} By \Cref{rem:xkEk} and \Cref{lem:precision},
  \begin{equation*}
    \begin{split} (E_{\ell}/M)^{1/4} &\ge -x_{\ell} \ge x_{n}-x_{\ell}
      =
      \sum_{k=\ell+1}^{n}y_{k} \gg (n-\ell-1) E_{\ell}^{1/2} \\
      (E_{\ell-1}/M)^{1/4} &\le
          -x_{\ell-1}\le x_{n+1}-x_{\ell-1} =
      \sum_{k=\ell}^{n+1}y_{k} \ll 
      (n-\ell+1)E_{\ell}^{1/2}.
    \end{split}
  \end{equation*}
  The result follows because, by
  \eqref{eq:ellminusone},
  \begin{equation*} E_{\ell-1}^{1/4} =
    E_{\ell}^{1/4}+(E_{\ell-1}^{1/4}-E_{\ell}^{1/4}) \ge
    E_{\ell}^{1/4} - \cO(E_{\ell}^{1/2}) \gg E_{\ell}^{1/4}.
  \end{equation*}
\end{proof}

\begin{lem} \label{lem:xell} $\abs{x_{\ell}} \gg E_{\ell}^{1/4}$.
\end{lem}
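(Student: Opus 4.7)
The approach is to exploit the minimality of $\ell$ from \Cref{def:nell}. If $\ell = 0$ there is nothing to do, since $(x_0,y_0) \in \bP_{M}$ immediately gives $|x_0| \ge M^{-1/4} E_0^{1/4}$. Assuming $\ell \ge 1$, the minimality forces $(x_{\ell-1}, y_{\ell-1}) \notin \bP_{M}$. After checking that the sign conditions defining $\bP_M$ (namely $x \le 0$ and $y \ge 0$) still hold one step earlier — so that the only way to fall out of $\bP_M$ is via the quasi-Hamiltonian inequality failing — one extracts the pointwise lower bound
\[ |x_{\ell-1}| > M^{-1/4} E_{\ell-1}^{1/4}. \]

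The next step is to propagate this bound one iterate forward using $x_\ell = x_{\ell-1} + y_\ell$ from \eqref{eq:yn}. In the fat region both $x_\ell, x_{\ell-1} \le 0$ while $y_\ell \ge 0$, so this reads $|x_\ell| = |x_{\ell-1}| - y_\ell$. Plugging in the energy bounds $y_\ell \ll E_\ell^{1/2}$ from \Cref{lem:precision} together with $E_{\ell-1}^{1/4} \ge E_\ell^{1/4} - \cO(E_\ell^{1/2})$ from \eqref{eq:ellminusone}, one obtains
\[ |x_\ell| \ge M^{-1/4}\bigl(E_\ell^{1/4} - \cO(E_\ell^{1/2})\bigr) - \cO(E_\ell^{1/2}) = E_\ell^{1/4}\bigl(M^{-1/4} - \cO(E_\ell^{1/4})\bigr). \]

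Finally, because $\pnbd$ is chosen to lie in a sufficiently small neighbourhood of $\bp$ (by taking the refinement depth $m$ large enough), the initial energy $E_0$, and hence $E_\ell$, is uniformly small, so the bracketed factor is bounded below by, say, $\frac{1}{2} M^{-1/4}$. This yields $|x_\ell| \gg E_\ell^{1/4}$, closing the argument. The only genuine verification needed, as opposed to routine algebraic estimation, is the confirmation that the sign conditions hold at step $\ell-1$; everything after that is a short combination of \Cref{lem:precision} and \eqref{eq:ellminusone}.
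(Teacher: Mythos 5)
For $\ell \ge 1$ your argument coincides with the paper's: minimality of $\ell$ gives $\abs{x_{\ell-1}} \ge (E_{\ell-1}/M)^{1/4}$, and combining $x_\ell = x_{\ell-1}+y_\ell$ with the bound $\abs{y_\ell} \ll E_\ell^{1/2}$ from \Cref{lem:precision} and the comparison $E_{\ell-1}^{1/4} \gg E_\ell^{1/4}$ from \eqref{eq:ellminusone} yields the claim, exactly as in the paper. One correction: your dispatch of the case $\ell=0$ reverses an inequality. Membership $(x_0,y_0)\in\bP_M$ means $Mx_0^4 \le H(x_0,y_0)=E_0$, i.e.\ $\abs{x_0}\le M^{-1/4}E_0^{1/4}$ --- an \emph{upper} bound on $\abs{x_0}$, not the lower bound you assert, so that case is not ``nothing to do.'' This causes no real damage because, per \Cref{rem:largeN}, the lemma is only invoked when $\ell$ is large (hence $\ell\ge 1$), but the $\ell=0$ case should be excluded on those grounds rather than claimed to be immediate.
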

\begin{proof} Using \eqref{eq:yn} and \eqref{eq:precision},
  \begin{equation*}
    \begin{split}
      \abs{x_{\ell}} = \abs{x_{\ell-1}+y_{\ell}} &\ge
      \abs{x_{\ell-1}}-\abs{y_{\ell}} \\ &\ge
      (E_{\ell-1}/M)^{1/4}-\cO(E_{\ell}^{1/2})
      \gg E_{\ell-1}^{1/4}\gg E_{\ell}^{1/4}.
    \end{split}
  \end{equation*}
\end{proof} Next, we relate $\ell$ to $E_{\ell}$. Our strategy is
to show that there exists a point $(x^{s},y^{s})$ on the stable
manifold of
$\bp$ whose trajectory ``shadows'' the trajectory of $(x,y)$ for
$\ell$ iterations; then, we use the dynamics on the stable manifold, \Cref{lem:stabledyn}, 
to estimate $\ell$.

Denote  a sufficiently long piece of the  stable manifold of $\bp$ by $\{(z,\gamma_{s}(z))\}$. Let $\{(z,U_{\ell}(z))\}$ be a sufficiently long piece of the unstable
manifold of the point $(x_{\ell}, y_{\ell})$. 
Denote by $S_{\ell} = (\xi_{\ell}, \zeta_{\ell})$ the point of intersection of
$U_{\ell}$ and $\gamma_{s}$. That is,
$U_{\ell}(\xi_{\ell}) = \gamma_{s}(\xi_{\ell})=\zeta_{\ell}$.

\begin{lem} \label{lem:blah}
  $0 \le \abs{\xi_{\ell}}-\abs{x_{\ell}} \ll E_{\ell}^{1/4}$.
\end{lem}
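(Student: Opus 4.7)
The statement splits naturally into the sign claim $\xi_\ell \le x_\ell$ and the magnitude bound $|x_\ell - \xi_\ell| \ll E_\ell^{1/4}$; I will prove each in turn.

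For the sign, the key observation is that at $x = x_\ell$ the curve $U_\ell$ lies strictly above $\gamma_s$. Indeed, since $(x_\ell,y_\ell)\in \bP_M$, \eqref{eq:ykEk} gives $U_\ell(x_\ell)=y_\ell\asymp E_\ell^{1/2}$, while by the parabolic description of the invariant manifolds near $\bp$ (see \cite[Lemma~3.5]{LM}) one has $\gamma_s(x_\ell)\asymp x_\ell^2$, and \eqref{eq:xy} then yields $x_\ell^2\ll y_\ell/M$; so $\gamma_s(x_\ell) \ll y_\ell=U_\ell(x_\ell)$ provided $M$ is large. On the arc under consideration the unstable cone \eqref{eq:ucone} forces $dU_\ell/dx>0$, while the symmetric cone mentioned just after \eqref{eq:ucone} forces $d\gamma_s/dz<0$ for $z<0$ (where $\gamma_s(z)>0$). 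Hence the difference $U_\ell-\gamma_s$ strictly increases to the right of $x_\ell$ (ruling out any intersection there) and strictly decreases to the left, so there is a unique intersection at some $\xi_\ell\le x_\ell<0$, giving $|\xi_\ell|\ge|x_\ell|$.

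For the magnitude, I would parametrize the arc of $U_\ell$ from $(\xi_\ell,\zeta_\ell)$ to $(x_\ell,y_\ell)$ as a graph $y=y(x)$ over $[\xi_\ell,x_\ell]$. Along this arc $y$ is non-negative, running monotonically from $\zeta_\ell\ge 0$ up to $y_\ell$. The lower bound in the unstable cone \eqref{eq:ucone} gives $dy/dx \ge K_-\sqrt{y}$, whence
\begin{equation*}
|x_\ell-\xi_\ell| \;=\; \int_{\zeta_\ell}^{y_\ell}\frac{dy}{\,dy/dx\,} \;\le\; \frac{1}{K_-}\int_0^{y_\ell}\frac{dy}{\sqrt{y}} \;=\; \frac{2\sqrt{y_\ell}}{K_-} \;\ll\; E_\ell^{1/4},
\end{equation*}
again by \eqref{eq:ykEk}. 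Combining the two steps yields $0\le|\xi_\ell|-|x_\ell|\ll E_\ell^{1/4}$.

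The main potential obstacle is the sign step: for it one must know that $\gamma_s$ behaves like a genuine parabola $\gamma_s(z)\asymp z^2$ on the relevant side of $\bp$ (to be quoted from \cite{LM}), and one must keep the entire arc of $U_\ell$ inside $\{y\ge 0\}$ so that the cone bounds deliver strict, not merely weak, signs on the two slopes. The size bound itself is then a one-line change of variables driven solely by \eqref{eq:ucone} and \eqref{eq:ykEk}.
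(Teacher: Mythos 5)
Your proof is correct and follows the same basic strategy as the paper: the sign comes from monotonicity of the unstable curve versus the stable manifold, and the horizontal run of the arc from $(\xi_\ell,\zeta_\ell)$ to $(x_\ell,y_\ell)$ is controlled by the lower cone bound \eqref{eq:ucone} on its slope. The one genuine (if minor) difference is in which term of the cone bound you exploit for the magnitude estimate: the paper bounds the slope from below by the \emph{constant} $K_-(\abs{x_\ell}+\sqrt{\zeta_\ell})\ge K_-\abs{x_\ell}$ and divides the vertical extent $y_\ell-\zeta_\ell$ by it, which requires \Cref{lem:xell} ($\abs{x_\ell}\gg E_\ell^{1/4}$) as an input; you instead keep only the $\sqrt{y}$ term, integrate $dx\le dy/(K_-\sqrt{y})$, and conclude from $y_\ell\asymp E_\ell^{1/2}$ alone (\Cref{lem:precision}/\eqref{eq:ykEk}), so your argument does not need \Cref{lem:xell}. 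Since $\abs{x_\ell}$ and $\sqrt{y_\ell}$ are both of order $E_\ell^{1/4}$ here, the two variants are interchangeable; yours is marginally more self-contained, the paper's avoids the change of variables. Your sign step is also just a fleshed-out version of the paper's one-line remark (the point is in the fat region, i.e.\ above $\gamma_s$, and $U_\ell$ is increasing while $\gamma_s$ is decreasing), so no issues there.
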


\begin{proof} The lower bound clearly follows from \eqref{eq:ucone}
which implies that the unstable curves are increasing. To prove
  the upper bound, note that for every $\xi_{\ell}<z < x_{\ell}$,
  \begin{equation*} U_{\ell}'(z) \ge K_{-}(\abs{z}+\sqrt{\abs{U_{\ell}(z)}} )\ge
    K_{-}(\abs{x_{\ell}} +\sqrt{\zeta_{\ell}}) .
  \end{equation*}

  From the geometry and using \Cref{lem:xell} and \Cref{lem:precision} we have
  \begin{equation*}
    \abs{\xi_{\ell}} -\abs{x_{\ell}}\le
    \frac{y_{\ell}-\zeta_{\ell}}{K_{-}(\abs{x_{\ell}}
      +\sqrt{\zeta_{\ell}})} \ll \frac{y_{\ell}}{\abs{x_{\ell}}} \ll
    \frac{E_{\ell}^{1/2}}{E_{\ell}^{1/4}} \ll E_{\ell}^{1/4}.
  \end{equation*}
\end{proof}

\begin{rem} \label{rem:largeN}
  In the following lemma, in addition to previous restrictions on $(x_0, y_0)$ being sufficiently close to $\bp$, we need $\ell$ to be sufficiently large. This is accomplished by considering $\trn$ sufficiently large beacuse this forces $(x_0, y_0) \in R_\trn$ to be sufficently close to the local stable manifold of $\bp$, which in turn, by continuity, forces $\ell$ to be large.
\end{rem}
\begin{lem} \label{lem:timein}
  $ \ell \asymp E_{\ell}^{-1/4}$.
\end{lem}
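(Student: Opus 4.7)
The plan is to exploit the shadowing setup introduced just before Lemma \ref{lem:blah}: run the stable-manifold orbit through $S_\ell = (\xi_\ell, \zeta_\ell)$ backwards $\ell$ steps to obtain a base point $(x_0^s, y_0^s) := \map^{-\ell} S_\ell$, show that it is comparable to $(x_0, y_0) \in \wholetail$, and then read $\ell$ off from the explicit $A/n$ asymptotics of Lemma \ref{lem:stabledyn}.

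First I would argue that $(x_0^s, y_0^s)$ is close to $(x_0, y_0)$. By construction $S_\ell$ and $(x_\ell, y_\ell)$ lie on the common unstable curve $U_\ell$ with arclength separation $\ll E_\ell^{1/4}$ by Lemma \ref{lem:blah}. Since $\map^{-1}$ maps unstable curves to unstable curves of nonincreasing arclength (an immediate consequence of the cone invariance underlying \eqref{eq:ucone}), the preimage $\map^{-\ell} U_\ell$ is an unstable curve through $(x_0, y_0)$ on which $(x_0^s, y_0^s)$ sits at distance $\ll E_\ell^{1/4}$. By Remark \ref{rem:largeN}, taking $\trn$ large makes $E_\ell$ arbitrarily small, while $(x_0, y_0) \in Y = \bT^2 \setminus \pnbd$ is uniformly bounded away from $\bp$. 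Consequently $|x_0^s|$ is bounded below by a fixed positive constant, so $A/|x_0^s| = \cO(1)$.

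Next, apply Lemma \ref{lem:stabledyn} (using the symmetry $h(-x) = -h(x)$ to transport the asymptotics to the branch of the stable manifold in the second quadrant) to the stable-manifold orbit starting at $(x_0^s, y_0^s)$, which reaches $S_\ell$ at step $\ell$. This yields
\begin{equation*}
\abs{\xi_\ell} = \frac{A}{\ell + A/\abs{x_0^s}} + \cO\!\left(\frac{1}{(\ell + A/\abs{x_0^s})^2}\right).
\end{equation*}
Since $A/\abs{x_0^s} = \cO(1)$ and $\ell$ is large (Remark \ref{rem:largeN}), this reads $\abs{\xi_\ell} \asymp 1/\ell$. Combining Lemma \ref{lem:xell}, Remark \ref{rem:xkEk}, and Lemma \ref{lem:blah} gives $\abs{\xi_\ell} \asymp E_\ell^{1/4}$, so equating the two sides produces $\ell \asymp E_\ell^{-1/4}$, as claimed.

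The main obstacle is the shadowing step: near $\bp$ the unstable multiplier is only $1+\cO(\abs{x})$, so $\map^{-1}$ contracts unstable arclength very weakly and one cannot hope for a uniform exponential rate. Fortunately Lemma \ref{lem:blah} already delivers a separation of size $\cO(E_\ell^{1/4})$, and mere nonexpansion under $\map^{-1}$ (inherent in the cone condition) is enough to confine $(x_0^s, y_0^s)$ to an arbitrarily small neighborhood of $(x_0, y_0)$ when $\trn$ is large, so it stays comfortably outside $\pnbd$. Once that confinement is secured, the entire estimate reduces to the one-dimensional $A/n$ asymptotics on the stable manifold.
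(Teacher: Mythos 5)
Your proof is correct and follows essentially the same route as the paper: shadow the orbit by the backward images of $S_\ell$ on the stable manifold, apply the $A/n$ asymptotics of Lemma \ref{lem:stabledyn}, and match $\abs{\xi_\ell}\asymp 1/\ell$ against $\abs{\xi_\ell}\asymp E_\ell^{1/4}$ obtained from Lemmas \ref{lem:xell} and \ref{lem:blah}. The only difference is cosmetic: you explicitly justify that $\xi_0$ stays bounded away from $\bp$ (via non-expansion of unstable arclength under $\map^{-1}$), a point the paper leaves implicit when it discards the $A/\xi_0$ term.
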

\begin{proof} Since $\{\xi_{j}\}_{j=0}^{\ell}$ lies on the stable
  manifold of
  $\bp$, by \eqref{eq:stabledyn}, we have
  \begin{equation*}
    \xi_{\ell} \asymp \frac{A}{\ell+A/\xi_{0}} \Rightarrow \ell \asymp
    \frac{A}{\xi_{\ell}} - \frac{A}{\xi_{0}}.
  \end{equation*} Since $\abs{\xi_{\ell}} \ll E_{\ell}^{1/4}$, we have
  $\ell \gg E_{\ell}^{-1/4}$. On the other hand, by
  \eqref{eq:ellminusone}, we have
  \begin{equation*}
    \abs{\xi_{\ell-1}} \ge \abs{x_{\ell-1}} \ge
    M^{-1/4}E_{\ell-1}^{1/4} \gg E_{\ell}^{1/4},
  \end{equation*} which shows that $\ell \ll E_{\ell}^{-1/4}$.
\end{proof}

For $k \in \bN$, define $\reg_k = \{(x,y) \in \wholetail : n(x,y) =
k\}$. This is the set of points in $\wholetail=\map^{-1}\pnbd\setminus
\pnbd$ that spend exactly $k$ iterations on the left of the $y$-axis.
We assume here that $\reg_k$ is in the fat region and in the second
quadrant.

We will need some extra information about the geometry of $\wholetail$: By construction, and properties of the Markov partition, one unstable side of $\wholetail$ is the preimage of the other.
\begin{lem} For sufficiently large $k$, $\reg_k$ is the region bounded
  by the unstable sides of $\wholetail$ and the curves
  $\map^{-k}(\{x=0\})$ and $\map^{-(k+1)}(\{x=0\})$. 
\end{lem}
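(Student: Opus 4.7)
The plan is to unwind the definition. Since $n(x,y)=\max\{j\in\bN:x_j\le 0\}$ and, in the fat region of the second quadrant, the orbit crosses the $y$-axis exactly once during its excursion through $\pnbd$, the condition $n(x,y)=k$ for $(x,y)\in\wholetail$ is equivalent to $x_k\le 0$ and $x_{k+1}>0$. Hence
\begin{equation*}
\reg_k=\wholetail\cap\map^{-k}\{x\le 0\}\cap\map^{-(k+1)}\{x>0\},
\end{equation*}
so $\reg_k$ is the intersection of $\wholetail$ with the strip between the two preimage curves $\Gamma_j:=\map^{-j}\{x=0\}$, $j=k,k+1$. The geometric content of the lemma is then to show that each of $\Gamma_k,\Gamma_{k+1}$ crosses $\wholetail$ transversally from one unstable side to the other without re-entry; these two arcs together cut $\wholetail$ into three pieces, the middle one being $\reg_k$.

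Transversality rests on two ingredients. First, the stable cone is $D\map^{-1}$-invariant and is contracted toward the stable direction by iteration (this is the $\map^{-1}$-counterpart of the cone argument leading to \eqref{eq:ucone}, using the reversibility \eqref{eq:reversible}). So, for $j$ large, the tangent to $\Gamma_j$ lies in the stable cone at every point of $\Gamma_j\cap\wholetail$. Secondly, by the Markov structure and the hint that one unstable side of $\wholetail$ is the $\map^{-1}$-image of the other, both unstable sides of $\wholetail$ are pieces of the unstable manifold of $\bp$, hence tangent to the unstable cone. Transversality of stable and unstable cones then forces single-point crossings. The entry/exit pattern follows because $\Gamma_j$ contains $\bp$ (which is fixed by $\map$ and lies on $\{x=0\}$) and, near $\bp$, is close to the local stable manifold; thus $\Gamma_j$ traverses $\wholetail$ as a stable-like graph from one unstable side to the other. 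Finally, $\Gamma_k$ and $\Gamma_{k+1}=\map^{-1}\Gamma_k$ are disjoint inside $\wholetail$ by the monotonicity of the stable dynamics provided by \Cref{lem:stabledyn}, so $\reg_k$ is the claimed middle quadrilateral.

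The main obstacle is ruling out pathological configurations inside $\wholetail$: that some $\Gamma_j$ re-enters after exiting, meets an unstable side more than once, or exits through a stable side of $\wholetail$. The first two are excluded by the stable-cone estimate, which makes $\Gamma_j$ a monotone graph over the stable direction throughout $\wholetail$; the third is ruled out by the explicit Markov structure, which identifies the stable sides of $\wholetail$ as belonging to return-time sets disjoint from $\reg_k$ for the range of $k$ under consideration (as permitted by \Cref{rem:largeN}, where $\trn$, and hence $\ell$ and $k$, is taken sufficiently large).
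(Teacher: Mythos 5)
Your proof is correct and follows the same route as the paper's: unwind the definition of $n(x,y)=k$ to see that $\reg_k$ is the portion of $\wholetail$ between $\map^{-k}(\{x=0\})$ and $\map^{-(k+1)}(\{x=0\})$, and use the fact that for large $k$ these preimage curves are stable-like, hence cross $\wholetail$ cleanly from one unstable side to the other. The paper's own proof is only a two-line sketch of exactly this; your transversality and monotonicity details merely fill in what it leaves implicit.
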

\begin{proof} For large $k$, the $k$-th preimage of the $y$-axis is a
  curve close to the stable manifold of $\bp$. $\reg_k$ is the region in $\wholetail$ that lies between the curves
  $\map^{-k}(\{x=0\})$ and $\map^{-(k+1)}(\{x=0\})$. 
\end{proof}

Let us denote $\imreg_k = \map^k \reg_k$. For sufficiently large $k$, $\imreg_k$ is the region bounded by two unstable curves, the curve $\map^{-1}(\{x=0\})$ and the $y$-axis.

\begin{figure}[ht]
  \centering
  \includegraphics[width = 0.8\columnwidth]{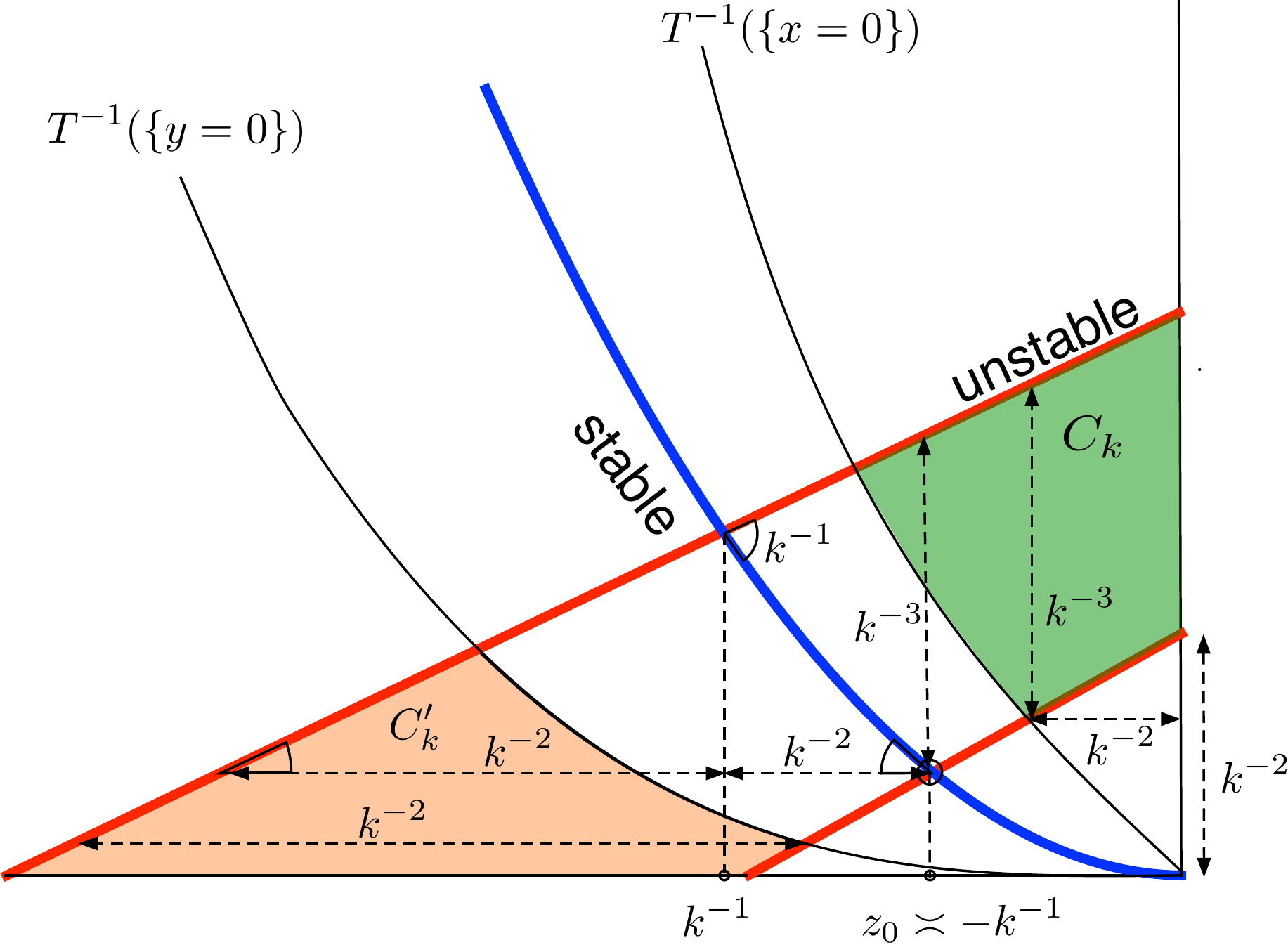}
  \caption{Proof of \Cref{lem:vlength}. Unstable curves are depicted as red increasing straight lines while stable curves (in blue) are decreasing, but in reality they are just smooth increasing and decreasing curves, respectively.}
  \label{fig:LM-gron}
\end{figure}
The following lemma gives an estimate on the vertical distance between the unstable sides of $C_k$; that is, on the length of a vertical line segment both of whose endpoints lie on the unstable sides of $C_k$.
\begin{lem} \label{lem:vlength}
  The vertical distance between the unstable sides of $\imreg_k$ is $\asymp k^{-3}$.
\end{lem}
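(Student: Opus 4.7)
The plan is to translate the vertical distance between the unstable sides of $\imreg_k$ into a difference in the quasi-Hamiltonian $H$, using the energy-time relation $H\asymp k^{-4}$. Let $\gamma_\pm$ denote the two unstable sides of $\imreg_k$ and set $(0,y_\pm) := \gamma_\pm\cap\{x=0\}$. At $x = 0$, \eqref{eq:ham} simplifies to $H(0, y) = y^2/2$ (because $h(0) = 0 = G(0)$), so
\begin{equation*}
  y_+^2 - y_-^2 = 2\bigl(H(0,y_+) - H(0,y_-)\bigr) =: 2\,\Delta H.
\end{equation*}
By Lemmas \ref{lem:timeout}--\ref{lem:timein}, $H(0,y_\pm)\asymp k^{-4}$, so $y_\pm \asymp k^{-2}$, and hence $y_+ - y_- \asymp \Delta H \cdot k^{2}$. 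It therefore suffices to show $\Delta H \asymp k^{-5}$.

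To estimate $\Delta H$, I would pull back through $\map^{-k}$: $(0,y_\pm) = \map^k p_\pm$ with $p_\pm \in u_\pm\cap\map^{-k}\{x=0\}\cap\reg_k$, where $u_\pm$ are the unstable sides of $\wholetail$. Iterating \eqref{eq:sqrtHdiff} along the orbit of $p_\pm$ (with $y_j\asymp k^{-2}$ by \Cref{lem:precision}) gives a total $H^{1/2}$-error $\ll k\cdot (k^{-2})^3 = k^{-5}$, hence an $H$-error $\ll k^{-7}$, so $H(0,y_\pm) = H(p_\pm) + \cO(k^{-7})$. Now, by the Markov-partition property that one unstable side of $\wholetail$ is the preimage of the other---say $u_+ = \map(u_-)$---the point $\map^{-1}p_+$ lies on $u_-\cap\map^{-(k+1)}\{x=0\}$, so its forward orbit reaches $\{x=0\}$ at step $k+1$. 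Applying Lemmas \ref{lem:timeout}--\ref{lem:timein} with $k$ replaced by $k+1$ gives $H(\map^{-1}p_+)\asymp(k+1)^{-4}$, and hence $H(p_+) \asymp (k+1)^{-4}$ as well (after quasi-conservation). Since $H(p_-)\asymp k^{-4}$,
\begin{equation*}
  \Delta H \asymp \bigl|(k+1)^{-4} - k^{-4}\bigr| \asymp k^{-5},
\end{equation*}
the quasi-conservation errors $\cO(k^{-7})$ being negligible. Combining yields $y_+ - y_- \asymp k^{-5} \cdot k^{2} = k^{-3}$, and the same argument at any $x$ in the interior of $\imreg_k$ (using $\partial_y H = y + \tfrac12 h(x) - \tfrac16 h'(x)y \asymp y \asymp k^{-2}$ from \eqref{eq:Hest}) gives the claim.

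The main obstacle is justifying the application of Lemmas \ref{lem:timeout}--\ref{lem:timein} to $\map^{-1}p_+$, which lies outside $\wholetail$: although its trajectory mirrors that of a genuine point of $\reg_{k+1}$ after one $\map$-step, one must verify that the conclusions of those lemmas transfer unchanged, which should hold since the proofs only use the dynamics and the energy-time relation, not the specific Markov location of the starting point.
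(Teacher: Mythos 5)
Your route through the quasi-Hamiltonian is genuinely different from the paper's, but it has a fatal gap at its central step. From $H(\map^{-1}p_+)\asymp (k+1)^{-4}$ and $H(p_-)\asymp k^{-4}$ you cannot conclude $\Delta H\asymp \bigl|(k+1)^{-4}-k^{-4}\bigr|$: the symbol $\asymp$ hides two different multiplicative constants $c_1<c_2$, so all that follows is $c_1k^{-4}-c_2(k+1)^{-4}\le H(p_-)-H(\map^{-1}p_+)\le c_2k^{-4}-c_1(k+1)^{-4}$; the lower end can be negative and the upper end is of order $k^{-4}$, not $k^{-5}$. Differencing two quantities each known only up to uncontrolled constants cannot produce the $k^{-5}$ gap your argument requires; you would need a one-term asymptotic of the form $H=A\,n^{-4}\left(1+\cO(n^{-1})\right)$ with a single constant $A$, which Lemmas \ref{lem:timeout} and \ref{lem:timein} do not supply.

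A second problem is the pullback $H(0,y_\pm)=H(p_\pm)+\cO(k^{-7})$. Estimate \eqref{eq:sqrtHdiff} is valid only in $\bP_M$, i.e.\ for the orbit segment between times $\ell$ and $n$. On the approach segment $0\le j<\ell$ the orbit hugs the stable manifold, where $y_j\asymp x_j^2$ and the per-step error $x_j^8+y_j^4$ in \eqref{eq:quasiham} is comparable to $H$ itself (summing \eqref{eq:quasiham} along the stable orbit gives $H(z,\gamma_s(z))=\cO(|z|^7)$, not $0$); in particular $H(p_\pm)$ for $p_\pm\in\wholetail$ is not $\asymp k^{-4}$, and the cumulative error over the approach is far larger than $k^{-7}$. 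This is precisely why the paper treats the approach phase by shadowing the stable manifold (\Cref{lem:timein}) rather than by energy conservation. The paper's own proof of the lemma is purely geometric: the two unstable sides, continued leftward, meet the stable manifold at consecutive points of a stable orbit (horizontal offset $\asymp k^{-2}$ by \eqref{eq:stabledyn}), the angle between stable and unstable directions there is $\asymp k^{-1}$ by \eqref{eq:ucone}, so the vertical separation is $\asymp k^{-3}$ at that point, and a Gronwall estimate using $|Du(\xi)|\ll k$ propagates this separation, up to bounded factors, all the way to the $y$-axis. If you want to salvage the Hamiltonian approach you would have to carry out an analogue of that geometric comparison anyway, so it buys nothing here.
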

\begin{proof} The upper and lower boundaries of $\imreg_k$ are formed
  by unstable manifolds that if continued to the left will intersect
  the immediate stable manifold of $\bp$ at points whose $x$-coordinate is proportional to $k^{-1}$ according to \eqref{eq:stabledyn}. Note that one unstable curve is the preimage of the other.  At the points of intersection between the unstable manifolds and the stable manifold (shown in \Cref{fig:LM-gron}), by \eqref{eq:ucone},
  the angle between the manifolds is $\asymp k^{-1}$. Therefore the vertical distance between the two unstable curves
  is $\asymp k^{-3}$ at the intersection of the lower unstable curve and the stable curve (see \Cref{fig:LM-gron}). Let us denote $x$-coordinate of this point by $z_0$. Let us label the two unstable curves by $(x,U_1(x))$ and $(x,U_2(x))$
  and denote their slopes by $u_1(x)$ and $u_2(x)$. By
  the mean value inequality, it follows that
  \begin{equation*}
    \abs{u_2(x)-u_1(x)} \le |Du(\xi)| \norm{(x,U_2(x))-(x,U_1(x))},
  \end{equation*}
  where $\xi \in \bT^2$ is a point on the line segment connecting $(x,U_1(x))$ and $(x,U_2(x))$ and $(1,u(\xi))$ is the unstable vector at $\xi$.
  
  By \cite[Lemma 6.6 and Proposition 4.1]{LM}, $|Du(\xi)| \ll \theta(\xi)^{-1}$, where $\theta(\xi) = u(\xi)+v(\xi)$ and $(1, -v(\xi))$ is the stable vector at $\xi$. By \eqref{eq:ucone}, $|Du(\xi)| \ll k$ in the region bounded by the stable curve, the two unstable curves and the $y$-axis.
  
 Since $U_j'(x) = u_j(x)$, for $j=1,2$, we have shown that
  \begin{equation*}
    \abs{(U_2(x)-U_1(x))'} \ll k \abs{U_2(x)-U_1(x)}.
  \end{equation*} 
  Now,
  by Gronwall inequality,
\begin{equation*} 
  U_2(z)-U_1(z) \le (U_2(z_0)- U_1(z_0))e^{Ck(z-z_0)}.
\end{equation*}
Recall that $-z_0 \asymp k^{-1}$, so for every $z \in (z_0,
  0)$
\begin{equation}
  U_2(z)-U_1(z) \le k^{-3}e^{Ck(z-z_0)} \ll k^{-3}.
\end{equation} Interchanging the role of $z$ and $z_0$, we also get the lower bound,
\begin{equation*}
  U_2(z)-U_1(z) \ge k^{-3}e^{-Ck(z-z_0)} \ge k^{-3}e^{-C} \gg k^{-3}.
\end{equation*}
\end{proof}

\begin{lem} Suppose $(x,y)$ is a point on the boundary of $\imreg_k$ which also lies on $\map^{-1}\{x=0\}$, then $\abs{x} \asymp k^{-2}$.
\end{lem}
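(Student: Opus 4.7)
A point $(x,y)$ on the portion of $\partial \imreg_k$ contained in $\map^{-1}\{x=0\}$ can be written as $(x,y) = \map^k(x_0, y_0) = (x_k, y_k)$ for some $(x_0, y_0) \in \reg_k$. By definition of $\reg_k$ we have $n(x_0, y_0) = k$, while the hypothesis $\map(x,y) \in \{x=0\}$ reads $x_{k+1} = 0$; plugging into \eqref{eq:yn} gives
\begin{equation*}
\abs{x} = \abs{x_k} = -x_k = y_{k+1}.
\end{equation*}
Thus proving $\abs{x} \asymp k^{-2}$ reduces to proving $y_{k+1} \asymp k^{-2}$.

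The first step is the time–energy correspondence. Applying \Cref{lem:timein} and \Cref{lem:timeout} at $n = k$ yields $k \asymp E_\ell^{-1/4}$, hence $E_\ell \asymp k^{-4}$. \Cref{lem:precision} then gives $y_k \asymp E_\ell^{1/2} \asymp k^{-2}$, which already delivers the desired order of magnitude. What remains is to control the one-step increment $y_{k+1} - y_k = h(x_k)$ and show that it is negligible compared to $y_k$.

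To that end I would verify that $(x_k, y_k) \in \bP_M$ (with $M$ chosen large enough from the outset). In the second-quadrant fat region the trajectory moves rightward, so $\abs{x_j}$ is non-increasing in $j$, while $H(x_j, y_j) \asymp E_\ell$ uniformly for $\ell \le j \le k$ by \eqref{eq:Esqrt_ch}. Together these propagate the defining inequality $M x_j^4 \le H(x_j, y_j)$ from $j = \ell$ up to $j = k$. With $(x_k, y_k) \in \bP_M$ secured, \Cref{lem:xy} gives $x_k^2 \ll y_k$, hence $\abs{x_k} \ll k^{-1}$ and $\abs{h(x_k)} \ll \abs{x_k}^3 \ll k^{-3}$, a factor $k$ smaller than $y_k$. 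Consequently $y_{k+1} = y_k + h(x_k) \asymp y_k \asymp k^{-2}$, which is the bound sought.

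The main obstacle is the $\bP_M$-propagation step: without it one only has the weak bound $\abs{x_k} \ll 1$, which is insufficient to rule out $h(x_k)$ being of the same order as $y_k$ and would break the asymptotic. The combination of geometric monotonicity of $\abs{x_j}$ and quasi-conservation of $H$ is precisely what overcomes this, possibly at the cost of shrinking $M$ by a constant factor in the definition of $\bP_M$.
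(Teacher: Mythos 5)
Your proof is correct and follows essentially the same route as the paper: the heart of both arguments is \Cref{lem:precision}, \Cref{lem:timeout} and \Cref{lem:timein}, which give $E_\ell \asymp k^{-4}$ and hence $y_k \asymp k^{-2}$. The only difference is the final transfer from the $y$-coordinate to the $x$-coordinate, and there your argument is more elaborate than necessary: the paper simply parametrizes $\map^{-1}(\{x=0\})$ as $\{(-t,\,t+h(t))\}$ via \eqref{eq:invmap}, so a point $(x,y)$ on that curve satisfies $y=\abs{x}+h(\abs{x})\asymp\abs{x}$ outright (since $h(t)=\cO(t^3)$), with no need to propagate membership in $\bP_M$ up to time $k$. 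Your identity $\abs{x}=y_{k+1}=y_k+h(x_k)$ is the same relation in dynamical clothing, and even there the bound $\abs{h(x_k)}=h(\abs{x})\ll\abs{x}^3\ll\abs{x}$ closes the comparison $\abs{x}\asymp y_k$ directly, so the $\bP_M$-propagation paragraph, while valid, is dispensable.
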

\begin{proof} By \Cref{lem:precision}, 
\Cref{lem:timeout} and \Cref{lem:timein}, $y \asymp k^{-2}$.
  Since $\map^{-1}(\{x=0\})= \{(-y, y+h(y))\}$, it follows that $\abs{x} \asymp k^{-2}$.
\end{proof}

\begin{lem} \label{lem:measreg} $\leb(\reg_k) = \leb(\imreg_k) \asymp k^{-5}$.
\end{lem}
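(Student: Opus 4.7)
First I would deduce the equality $\leb(\reg_k)=\leb(\imreg_k)$ from the symplectic (hence Lebesgue-measure-preserving) nature of $\map$: since $\imreg_k=\map^k\reg_k$ and $\map$ has unit Jacobian, the two sets have the same measure. The remaining task is then to prove $\leb(\imreg_k)\asymp k^{-5}$.

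To estimate $\leb(\imreg_k)$ I would use the explicit description of $\imreg_k$ as a curvilinear quadrilateral bounded on the right by the $y$-axis, on the lower-left by the curve $\map^{-1}\{x=0\}=\{(-y,y+h(y))\}$, and above and below by the two unstable curves $U_1,U_2$. The previous lemma shows that the corner where $U_1$ meets the left boundary has $|x|$-coordinate $\asymp k^{-2}$, and since $U_2$ lies only $\asymp k^{-3}$ above $U_1$ and the left boundary has slope $\approx -1$, the corresponding corner for $U_2$ also has $|x|\asymp k^{-2}$; thus $\imreg_k\subset\{-Ck^{-2}\le x\le 0\}$ for some $C>0$. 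Integrating by Fubini over vertical slices and using $U_2(x)-U_1(x)\ll k^{-3}$ from \Cref{lem:vlength} gives the upper bound $\leb(\imreg_k)\ll k^{-2}\cdot k^{-3}=k^{-5}$ at once.

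For the matching lower bound I would restrict to a narrower vertical strip $\{-ck^{-2}\le x\le 0\}$ with $c>0$ small enough that $U_1(x)>-x-h(x)$ throughout, so that the vertical slice of $\imreg_k$ at $x$ is exactly $[U_1(x),U_2(x)]$, of length $\asymp k^{-3}$ by \Cref{lem:vlength}. That $U_1(x)\asymp k^{-2}$ on this strip follows by combining \Cref{lem:precision}, \Cref{lem:timeout} and \Cref{lem:timein} (which collectively pin $y$-coordinates on the unstable boundary to order $k^{-2}$), while plainly $|{-x-h(x)}|\le ck^{-2}+\cO(k^{-6})$ is $\ll k^{-2}$ for $c$ small. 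Integration then yields $\leb(\imreg_k)\gg k^{-2}\cdot k^{-3}=k^{-5}$.

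The step I expect to require the most care is verifying this separation $U_1(x)>-x-h(x)$ quantitatively on a full sub-strip of horizontal size $\asymp k^{-2}$: geometrically the ``corner cut'' produced by the slope-$(-1)$ left boundary intrudes into the region only on a horizontal scale $\asymp k^{-3}$, which is genuinely negligible compared with the bulk horizontal extent $\asymp k^{-2}$, but making this rigorous requires the quantitative input from the preceding lemmas. After that the estimate is a direct application of Fubini.
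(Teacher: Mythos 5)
Your proposal is correct and follows exactly the route the paper intends: the equality comes from $\leb$-invariance of $\map$, and the area estimate comes from combining the horizontal extent $\asymp k^{-2}$ and the vertical thickness $\asymp k^{-3}$ of $\imreg_k$ established in the two preceding lemmas. The paper compresses this into a one-line ``direct consequence''; your Fubini argument, including the check that the corner cut along $\map^{-1}\{x=0\}$ only affects a horizontal scale $\asymp k^{-3}$, is precisely the detail being elided.
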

\begin{proof} This is a direct consequence of the previous two lemmas and the $\map$-invariance of $\leb$.
\end{proof}

\begin{lem} \label{lem:incl}
$\abs{\pi_y(\imreg_k)} \asymp k^{-3}$, where $\pi_y$
denotes projection onto the $y$-axis.
\end{lem}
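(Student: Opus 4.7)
The plan is to show $\abs{\pi_y(\imreg_k)} \asymp k^{-3}$ by combining the vertical-gap estimate of \Cref{lem:vlength} with a slope estimate for the unstable boundary curves of $\imreg_k$. Recall that $\imreg_k$ is a thin quadrilateral with two unstable arcs as top and bottom, a piece of the $y$-axis on the right, and a piece of $\map^{-1}\{x=0\}$ on the left.

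The lower bound is immediate: at any $x$-coordinate in the horizontal extent of $\imreg_k$, the vertical segment between the two unstable sides lies inside $\imreg_k$ and has length $\asymp k^{-3}$ by \Cref{lem:vlength}, so its projection contributes at least $\asymp k^{-3}$ to $\abs{\pi_y(\imreg_k)}$.

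For the upper bound, I first record that the horizontal extent of $\imreg_k$ is $\asymp k^{-2}$, since the left boundary is the piece of $\map^{-1}\{x=0\}$ on which the previous lemma gives $\abs{x} \asymp k^{-2}$ and the right boundary sits on $x=0$. A short bootstrap next shows that $y \asymp k^{-2}$ throughout $\imreg_k$: the two corners on $\map^{-1}\{x=0\}$ satisfy $y \asymp k^{-2}$ via \Cref{lem:precision}, \Cref{lem:timeout}, and \Cref{lem:timein}, and \eqref{eq:ucone} with $\abs{x} \ll k^{-2}$ yields a crude slope bound $\ll 1$ on each unstable side, producing a $y$-variation $\ll k^{-2}$ across the horizontal width $\asymp k^{-2}$; this is enough to propagate $y \asymp k^{-2}$ to the whole region. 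Plugging $y \asymp k^{-2}$ back into \eqref{eq:ucone} sharpens the slope to $u \asymp \abs{x}+\sqrt{\abs{y}} \asymp k^{-1}$, so the $y$-variation along each unstable side is $\asymp k^{-1}\cdot k^{-2} = k^{-3}$. Adding this to the vertical gap $\asymp k^{-3}$ from \Cref{lem:vlength} gives $\abs{\pi_y(\imreg_k)} \ll k^{-3}$.

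I do not anticipate any real obstacle; the argument is geometric bookkeeping built on \Cref{lem:vlength} and the preceding corner estimate on $\map^{-1}\{x=0\}$. The only subtle point is the self-consistency bootstrap confirming $y \asymp k^{-2}$ throughout $\imreg_k$, which is a one-line consequence of any slope bound $\ll 1$ keeping the $y$-variation negligible compared with the target $k^{-2}$.
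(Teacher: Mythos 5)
Your proposal is correct and follows essentially the same route as the paper: the lower bound comes directly from the vertical gap $\asymp k^{-3}$ of \Cref{lem:vlength}, and the upper bound from the slope $\asymp k^{-1}$ of the unstable sides (via \eqref{eq:ucone}) accumulated over the horizontal extent $\asymp k^{-2}$. Your explicit bootstrap establishing $y\asymp k^{-2}$ throughout $\imreg_k$ is a reasonable elaboration of a step the paper leaves implicit, but it does not change the argument.
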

\begin{proof} 
  We established in \Cref{lem:vlength} that the vertical distance between the unstable sides of $\imreg_k$ is $\asymp k^{-3}$. It follows that $\abs{\pi_y(\imreg_k)} \gg k^{-3}$. To show the upper bound, it remains to take care of the inclination of the unstable sides. By \eqref{eq:ucone}, the angle of the unstable
  boundaries of $\imreg_k$ with the horizontal is $\asymp k^{-1}$  which
  means they can increase vertically by a factor of $k^{-3}$ in a
  distance of $k^{-2}$. It follows that $\abs{\pi_y(\imreg_k)} \ll k^{-3}$.
\end{proof}

\begin{rem}
  It follows from the previous lemmas that $\imreg_k$ is contained in
  a true rectangle (not a Markov rectangle) of vertical length $\asymp
  k^{-3}$ and of horizontal length $\asymp k^{-2}$.
\end{rem}

\begin{figure}[ht]
    \centering
    \includegraphics[width = 0.6\columnwidth, height = 0.6\columnwidth]{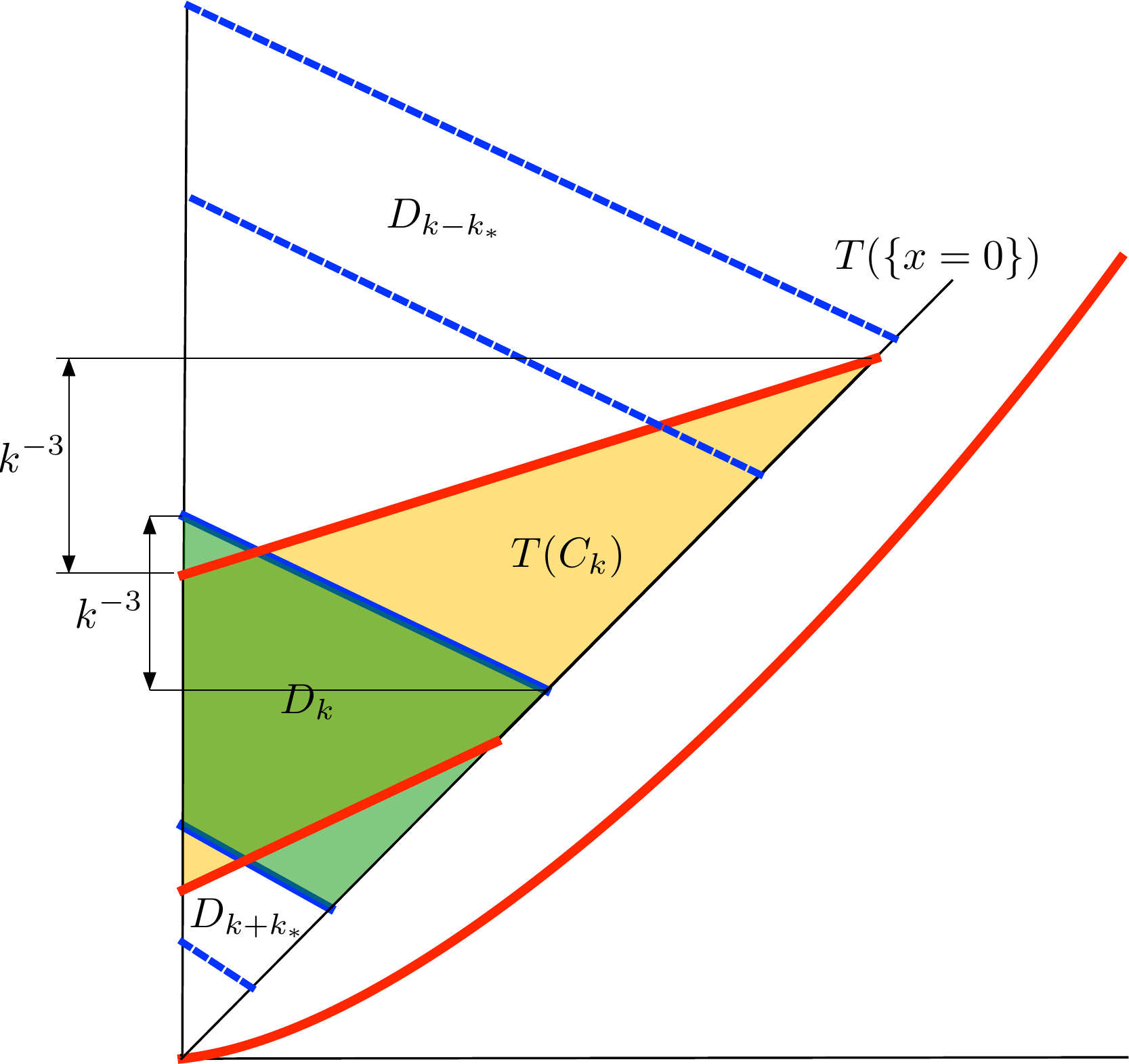}
    \caption{Proof of \Cref{lem:symcover}.}
    \label{fig:LM-gron-sym}
  \end{figure}

Let $\symimreg_k :=\Pi_1 \imreg_k$. Due to the symmetry, $\map^k \symimreg_k = \map^k \Pi_1 \Pi_1 \symimreg_k = \Pi_1 \map^{-k}\Pi_1 \symimreg_k = \Pi_1 \map^{-k} \imreg_k$. Therefore, $\symimreg_k$ is the set of
points on the right of the $y$-axis whose preimage is on the left of
the $y$-axis and that spend exactly $k$ iterations in $\pnbd$ before
mapping into $\map\pnbd\setminus \pnbd$. 

\begin{lem} \label{lem:symcover} There exists $k_* \in \bN$ such that for all sufficiently large $k$,
  \begin{equation*} \map\imreg_k \subset \symimreg_{k-k_*} \cup \dots \cup \symimreg_{k+k_*}.
  \end{equation*}
\end{lem}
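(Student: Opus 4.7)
For $(x_0,y_0)\in \reg_k$, set $p_n=\map^n(x_0,y_0)$ and let $N+1$ be the first integer $n$ with $p_n\in \map\pnbd\setminus\pnbd$. Using the characterisation of $\symimreg_j$ recalled just before the lemma (right half-plane, preimage on the left, exactly $j$ further iterates in $\pnbd$ before exiting to $\map\pnbd\setminus\pnbd$), one directly checks that $p_{k+1}\in \symimreg_{N-k}$.  The lemma is therefore equivalent to the sharp additive bound
\begin{equation*}
    |N-2k|\leq k_*\quad\text{uniformly in large }k\text{ and }(x_0,y_0)\in \reg_k.
\end{equation*}
The reversibility $\Pi_1\map=\map^{-1}\Pi_1$, together with $\Pi_1\pnbd=\pnbd$ and $\Pi_1(\map\pnbd\setminus\pnbd)=\wholetail$, shows that the forward orbit $\map^m\Pi_1 p_{N+1}=\Pi_1 p_{N+1-m}$ of $\Pi_1 p_{N+1}$ spends exactly $N-k$ iterates in $\pnbd\cap\{x\leq 0\}$ before crossing. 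Hence $\Pi_1 p_{N+1}\in \reg_{N-k}$, and the analysis of Section \ref{sec:firstret} applies equally to this initial condition.

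The core step is to sharpen \Cref{lem:timein}--\Cref{lem:timeout} from the multiplicative $k\asymp E_\ell^{-1/4}$ to the additive form
\begin{equation*}
    k=C\,H(x_0,y_0)^{-1/4}+O(1),\qquad N-k=C\,H(\Pi_1 p_{N+1})^{-1/4}+O(1),
\end{equation*}
with the \emph{same} universal constant $C$. The refinement is extracted from the precise estimate of \Cref{lem:stabledyn}, $x_n=A/(n+A/x_0)+O(n^{-2})$, which inverts to $n=A/x_n-A/x_0+O(1)$. Combined with $|\xi_\ell|\asymp E_\ell^{1/4}\asymp H^{1/4}$ from \Cref{lem:blah}, \Cref{lem:xell}, and the explicit bookkeeping of \Cref{lem:timeout} expressing $n-\ell$ in terms of $|x_\ell|/y_\ell$, this yields the displayed formula; the oddness $h(-x)=-h(x)$ guarantees that $C$ is identical on the two halves of the orbit.

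It remains to compare the two energies. A direct computation using $h(-x)=-h(x)$ and $G(-x)=G(x)$ yields $H\circ\Pi_1-H=-\tfrac16 h'(x)h(x)y-\tfrac{1}{12}h'(x)h(x)^2=O(x^5 y+x^8)$; summing \eqref{eq:quasiham} along the $N\asymp k$ iterations in $\pnbd$ (where $|x_j|,|y_j|\ll k^{-1}$) gives $|H(p_{N+1})-H(p_0)|\ll k^{-7}$. Since $H(p_0)\asymp k^{-4}$, both errors are negligible and produce
\begin{equation*}
    H(\Pi_1 p_{N+1})^{-1/4}=H(x_0,y_0)^{-1/4}\bigl(1+O(k^{-3})\bigr).
\end{equation*}
Plugging this into the additive formulas of the previous paragraph gives $N-k=k+O(1)$, i.e., the desired $|N-2k|=O(1)$. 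The main obstacle is the sharpening: the proofs in Section \ref{sec:firstret} as written produce only $\asymp$, so the explicit form of \Cref{lem:stabledyn} and the oddness of $h$ must be tracked carefully to obtain $O(1)$ additive precision with matching constants on the two halves of the orbit.
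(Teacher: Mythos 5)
Your reduction of the lemma to the uniform bound $|N-2k|\le k_*$ (with $N$ the total number of iterates the excursion spends in $\pnbd$) is correct, and the identification $p_{k+1}\in\symimreg_{N-k}$ via the reversibility is exactly the right way to phrase what must be proved. The route you propose to establish the bound, however, has two genuine problems. First, the formula $k=C\,H(x_0,y_0)^{-1/4}+O(1)$ and the estimate $|H(p_{N+1})-H(p_0)|\ll k^{-7}$ both rest on the assertion that $|x_j|,|y_j|\ll k^{-1}$ throughout the excursion in $\pnbd$. This is false: the orbit enters $\pnbd$ at distance comparable to the (fixed, small) size $\delta$ of $\pnbd$ and only reaches scale $|x_j|\asymp 1/j$ after $j$ steps, so the accumulated defect $\sum_j(x_j^8+y_j^4)$ of the quasi-Hamiltonian over the approach phase is of constant order $\delta^7$, which dwarfs $E_\ell\asymp k^{-4}$. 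Hence $H(x_0,y_0)$ is not comparable to $E_\ell$ and cannot parametrize $k$; this is precisely why \Cref{lem:timein} relates $\ell$ to $E_\ell$ (the energy at first entry into $\bP_M$) through the shadowing point on the stable manifold rather than through conservation of $H$ from time $0$. Second, and more fundamentally, the upgrade from $\ell\asymp E_\ell^{-1/4}$ and $n-\ell\asymp E_\ell^{-1/4}$ to additive statements with a common identified constant cannot be ``extracted'' from the quoted lemmas: \Cref{lem:blah} controls $|\xi_\ell|-|x_\ell|$ only up to $O(E_\ell^{1/4})$, an error of the same order as $|x_\ell|$ itself, so $\ell=A/\xi_\ell-A/\xi_0+O(1)$ pins down $\ell$ only up to a multiplicative constant; the two-sided bounds in the proof of \Cref{lem:timeout} have the same character. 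Producing matching additive constants on the two halves of the orbit is the entire difficulty, and your sketch leaves it unproved.

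The paper sidesteps all of this with a short geometric covering argument. The sets $\imreg_k$ and $\symimreg_k=\Pi_1\imreg_k$ sit just on either side of the $y$-axis at height $\asymp k^{-2}$; both $\map$ and $\Pi_1$ displace points there vertically by only $O(|x|^3)=O(k^{-6})$; and the $\symimreg_j$ tile the strip between $\{x=0\}$ and $\map\{x=0\}$ with vertical extent $\asymp j^{-3}$ and inclination $O(j^{-3})$ (\Cref{lem:vlength}, \Cref{lem:incl}). Hence $\map\imreg_k$, which has vertical extent $\asymp k^{-3}$ and sits essentially at the position of $\symimreg_k$, is covered by a bounded number of consecutive $\symimreg_j$ --- and this covering statement \emph{is} the bound $|N-2k|\le k_*$ you were after. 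If you wish to pursue your analytic route, you would need genuinely sharper asymptotics for the passage time than Section \ref{sec:firstret} provides; the geometric argument delivers the $O(1)$ precision for free because the level sets of the exit time are themselves the sets $\symimreg_j$ whose sizes have already been computed.
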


\begin{proof} Using the definition of $\map$, $\map\imreg_k$ is vertically lower than
$\imreg_k$ by an amount proportional to $k^{-6}$ and the vertical distance between its unstable sides is $\asymp k^{-3}$. Also $\symimreg_k$
has vertical height proportional to $k^{-3}$ and has inclination (with
respect to the horizontal) of $k^{-3}$ (see proof of \Cref{lem:incl}), as depicted in \Cref{fig:LM-gron-sym}. Since the proportionality constants are independent of $k$ (i.e. they hold for all sufficiently large $k$), $\map\imreg_k$ can be covered by finitely many $\symimreg_k$'s. That is, there exists $k_*$ such that $\map\imreg_k \subset \symimreg_{k-k_*} \cup \dots \cup \symimreg_{k+k_*}$.
\end{proof}

\begin{lem} For sufficiently large $k$,
    \begin{equation*}
      \reg_k \subset R_{2k-k_*} \cup \dots \cup R_{2k+k_*}, \qquad R_{2k}, R_{2k+1} \subset \reg_{k-k_*} \cup \dots \cup \reg_{k+k_*} .
    \end{equation*}
\end{lem}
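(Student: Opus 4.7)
The plan is to chain together \Cref{lem:symcover} with the defining property of $\symimreg_j$ to convert the count $n(p)$ of iterations on the left of the $y$-axis into the total return time $\rtzeroone(p)$.

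For the first inclusion I would take $p \in \reg_k$ with $k$ large, restricting to the fat region in the second quadrant (the remaining cases follow from the symmetry $\Pi_1$ and the analogous thin-region analysis). First I observe that the iterates $\map p, \ldots, \map^k p$ all lie on the left of the $y$-axis and, by the parabolic-trajectory estimates of \Cref{lem:precision} and \Cref{lem:timeout}, are confined to a shrinking neighbourhood of $\bp$, hence to $\pnbd$ once $k$ is large enough. Since $\map^k p \in \imreg_k$, \Cref{lem:symcover} places $\map^{k+1}p$ inside $\symimreg_j$ for some $j \in [k - k_*, k + k_*]$. Unpacking the characterisation of $\symimreg_j$ recalled just above that lemma, the orbit then spends exactly $j$ further iterations in $\pnbd$ before landing in $\map \pnbd \setminus \pnbd \subset Y$. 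Adding up the two pieces, the first return time is $\rtzeroone(p) = k + 1 + j$, which lies in $[2k + 1 - k_*, 2k + 1 + k_*]$. Enlarging $k_*$ by $1$ absorbs the stray $+1$ and yields $\reg_k \subset R_{2k-k_*} \cup \cdots \cup R_{2k+k_*}$.

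For the reverse inclusion I would start with $p \in R_N$ for $N \in \{2k, 2k+1\}$ (say in the second quadrant, fat region) and note that $\{\reg_n\}_n$ exhausts the relevant part of $\wholetail$, so $p \in \reg_n$ for some $n$ (which is large when $N$ is large, by continuity of the return time away from the stable manifold). The direct inclusion just proved then forces $N = \rtzeroone(p) \in [2n + 1 - k_*, 2n + 1 + k_*]$; solving for $n$ gives $|n - k| \le (k_* + 1)/2$. Replacing $k_*$ by a larger constant then delivers $R_{2k} \cup R_{2k+1} \subset \reg_{k-k_*} \cup \cdots \cup \reg_{k+k_*}$.

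I do not anticipate a serious obstacle: the real dynamical input---that the post-crossing trajectory shadows the pre-crossing one up to a uniformly bounded shift in the counting index---has already been packaged into \Cref{lem:symcover} through the $\Pi_1$-symmetry. What remains here is essentially an accounting exercise, the only subtle point being the off-by-one coming from the single crossing step $\map^{k+1}p$ and the occasional factor of $2$, both of which are cleanly absorbed by enlarging $k_*$ once and for all.
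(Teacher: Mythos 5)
Your proposal is correct and follows essentially the same route as the paper: both inclusions rest on \Cref{lem:symcover} together with the $\Pi_1$-symmetry, reducing the statement to bookkeeping of how the $\approx 2k$ iterations in $\pnbd$ split between the two sides of the $y$-axis, with all off-by-one discrepancies absorbed into $k_*$. Your derivation of the second inclusion by inverting the first (after noting that $n$ is necessarily large when $N$ is) is just a slight repackaging of the paper's count and is equally valid.
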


\begin{proof}
  The first statement follows directly from the previous
  lemma. For the second statement, note that $R_{2k+1}$ spends $2k$
  iterations in $\pnbd$ and this number of iterations is divided between
  left and right sides of the $y$-axis with the restriction that the
  number of iterations on the left and on the right can differ by at
  most $k_*$ because of the previous lemma and the symmetry.  This forces
  at most $j$ iterations on the left and $2k-j$ iterations on the right, where $-k_* \le j \le k_*$. This implies
  that $R_{2k+1}$ is a subset of $\reg_{k-k_*} \cup \dots \cup
  \reg_{k+k_*}$. For the same reason $R_{2k}$ must be a subset of
  $\reg_{k-k_*} \cup \dots \cup\reg_{k+k_*}$.
\end{proof}

\begin{prop} \label{prop:fat}
  \begin{equation*}
    \leb(R_{\trn}^{\text{fat}}) \asymp \trn^{-5}.
  \end{equation*}
\end{prop}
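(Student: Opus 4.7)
The proof splits into matching upper and lower bounds; I would argue the second-quadrant fat piece in detail, since the fourth-quadrant counterpart is handled identically via the reversibility $\Pi_1$ (which preserves each $R_{\trn}$).

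For the \emph{upper bound} $\leb(R_{\trn}^{\text{fat}}) \ll \trn^{-5}$, I would use the immediately preceding lemma in the form $R_{2k},R_{2k+1}\subset \reg_{k-k_*}\cup\cdots\cup\reg_{k+k_*}$, which gives
\begin{equation*}
\leb(R_{\trn}^{\text{fat}})\le \sum_{|j|\le k_*}\leb(\reg_{\lceil \trn/2\rceil+j}),
\end{equation*}
and then \Cref{lem:measreg} together with $\lceil \trn/2\rceil+j\asymp \trn$ bounds each of the $2k_*+1$ terms by a constant multiple of $\trn^{-5}$. Summing yields the claimed bound.

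For the \emph{lower bound}, I would first exploit the companion inclusion $\reg_k\subset R_{2k-k_*}\cup\cdots\cup R_{2k+k_*}$ together with disjointness of the $R_{\trn}$'s to obtain
\begin{equation*}
k^{-5}\asymp\leb(\reg_k)=\sum_{|j|\le k_*}\leb(\reg_k\cap R_{2k+j})\le\sum_{|j|\le k_*}\leb(R_{2k+j}^{\text{fat}}).
\end{equation*}
This only produces a summed estimate, so the task becomes showing that each slice $\reg_k\cap R_{2k+j}$ carries a definite fraction of $\leb(\reg_k)$. My plan is to identify these slices, under $T^{k+1}$, with the pieces of $T\imreg_k$ that lie in each of the finitely many $\symimreg_{k+j'}$ appearing in \Cref{lem:symcover}. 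Since each $\symimreg_{k+j'}$ has horizontal extent $\asymp k^{-2}$ and vertical extent $\asymp k^{-3}$, and since $T^{k+1}\restriction\reg_k$ has bounded distortion on a region of diameter $\asymp k^{-2}$ (this is where the refined cone bound \eqref{eq:ucone}, forcing both unstable and stable slopes to stay of order $k^{-1}$, is used to control Jacobian ratios), each slice inherits a fixed positive fraction of $\leb(\reg_k)\asymp k^{-5}$, giving the desired pointwise bound.

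The \textbf{main obstacle} is precisely this upgrade from a summed to a pointwise lower bound: the double inclusion from the preceding lemma alone does not rule out the possibility that $\leb(\reg_k)$ concentrates on a single return-time class in $\{2k-k_*,\dots,2k+k_*\}$. A robust alternative, which I would keep in reserve, is to redo the proof of \Cref{lem:measreg} directly for $R_{\trn}^{\text{fat}}$: bound its horizontal extent by $\asymp \trn^{-2}$ via \Cref{lem:timeout,lem:timein} (forcing $n\asymp\ell\asymp\trn/2$ and hence $|x|\asymp\trn^{-2}$ on the defining curves), and its vertical extent by $\asymp\trn^{-3}$ via the Gronwall argument used in \Cref{lem:vlength}. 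This bypasses the distortion argument at the cost of a longer but conceptually identical computation.
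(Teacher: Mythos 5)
Your upper bound is exactly the paper's argument: the proof there is the one\nobreakdash-line deduction from the inclusion $R_{2k},R_{2k+1}\subset \reg_{k-k_*}\cup\dots\cup\reg_{k+k_*}$ together with \Cref{lem:measreg}, just as you write. For the lower bound, the paper offers nothing beyond the same one-liner, and you have correctly diagnosed that the companion inclusion $\reg_k\subset R_{2k-k_*}\cup\dots\cup R_{2k+k_*}$ only yields the summed estimate $\sum_{|j|\le k_*}\leb(R_{2k+j}^{\text{fat}})\gg k^{-5}$; your ``main obstacle'' is a genuine gap in the proof as written, not a misreading. Two comments on your repairs. In your primary plan the bounded-distortion step is unnecessary: since $\map$ preserves $\leb$, one has the exact identity $\leb(\reg_k\cap R_{N})=\leb\bigl(\map\imreg_k\cap\symimreg_{k'}\bigr)$ for the appropriate $k'$, so no Jacobian control is needed. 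What is genuinely missing, and what your plan must still supply, is the geometric statement that $\map\imreg_k$ meets each relevant $\symimreg_{k'}$ in a set of measure $\gg k^{-5}$ rather than in a sliver near a corner; \Cref{lem:symcover} is only a covering (upper) inclusion and does not provide this. Your reserve plan --- estimating the horizontal and vertical extents of $R_{\trn}^{\text{fat}}$ directly by repeating the arguments of \Cref{lem:vlength} and the time--energy lemmas for the curves bounding $R_{\trn}$ --- is the robust way to close the gap and is what a complete proof should contain. It is also worth noting that in the only place the lower bound is used downstream (\Cref{prop:tailmeas} feeding the tail sums $\leb(\rtzeroone>n)=\sum_{\trn>n}\leb(R_{\trn})$ in the proof of \Cref{thm:main}), the summed version already suffices because the $\reg_k$ are pairwise disjoint. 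Finally, a small correction: $\Pi_1$ does not preserve $R_{\trn}$; since it conjugates $\map$ to $\map^{-1}$ it sends $R_{\trn}$ to $\map^{\trn}R_{\trn}$. The measure is unchanged, so your reduction to the second quadrant survives, but the justification should be phrased accordingly.
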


\begin{proof} 
This is a direct consequence of the previous lemma and the estimate on
the measure of $\reg_k$ from \Cref{lem:measreg}.
\end{proof}

\subsection{Analysis in the thin region}
\label{subsec:thin}

Analysis in the thin region is similar to the analysis we did in
\Cref{subsec:fat} for the fat region. Note that in this region the energy is negative.
Fix $M$ large (according to \Cref{lem:Hx}) and define
\begin{equation*}
  \bP'_{M}=\{(x,y)\in \bT^{2} : My^{2} \le \abs{H(x,y)}, x\le 0, y \ge
  0\}.\end{equation*}

  A priori, in the thin region, $y \ll x^2$ but in $\bP'_M$ we have the following better estimate.
 \begin{lem} \label{lem:yxthin} If $(x,y) \in \bP'_{M}$ then
  \begin{equation} \label{eq:yxthin} y \ll (1/M)x^2.
  \end{equation}
\end{lem}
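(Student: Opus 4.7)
The plan is to adapt the argument of \Cref{lem:xy} to the thin region, exploiting that here the quasi-Hamiltonian $H$ is negative and is dominated in magnitude by the potential term $G(x) \asymp x^4$, rather than by $y^2$ as in the fat case.

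First I would record the two a priori facts driving the argument. Since $\bP'_M$ lies below the stable manifold of $\bp$, the geometric bound $y \ll x^2$ noted in the paragraph preceding the lemma holds throughout $\bP'_M$. Moreover, the leading order expansion $H(x,y) = \tfrac12 y^2 - \tfrac{b}{4} x^4 + \cO(x^5 + y^3)$ near the origin shows that on the thin side of the parabolic boundary $\{y^2 \asymp x^4\}$ the potential wins, so $H(x,y) < 0$ and hence $\abs{H(x,y)} = -H(x,y)$ on $\bP'_M$.

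Next, I would rewrite \eqref{eq:ham} in the form
\begin{equation*}
  -H(x,y) = G(x) - \tfrac12 y^2 - \tfrac12 h(x) y + \tfrac1{12} h'(x) y^2 - \tfrac1{12} h(x)^2,
\end{equation*}
and use the expansions $G(x) \asymp x^4$, $h(x) \asymp x^3$, $h'(x) \asymp x^2$ together with $y \ll x^2$ to bound each of the four correction terms by a constant multiple of $x^4$ (indeed $y^2 \ll x^4$, $\abs{h(x)} y \ll x^5$, $h'(x) y^2 \ll x^6$, $h(x)^2 \ll x^6$). This yields $\abs{H(x,y)} \ll x^4$. Combining with the defining hypothesis $M y^2 \le \abs{H(x,y)}$ gives $M y^2 \ll x^4$, from which the claimed bound $y \ll (1/M) x^2$ follows in exactly the same loose sense as \Cref{lem:xy} (a factor $M^{1/2}$ absorbed into the implicit constant).

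The main obstacle I anticipate is securing the sign $H(x,y) < 0$ uniformly on $\bP'_M$ without circularity: while the leading order expansion makes this transparent near the origin, one should verify that the $\cO(x^5)$ corrections to $h$ do not spoil the sign in the transition zone near the parabolic boundary separating the fat and thin regions. This is precisely where the hypothesis $My^2 \le \abs{H(x,y)}$ with $M$ large is used, keeping $(x,y)$ well away from that boundary so the leading order determination of the sign is reliable.
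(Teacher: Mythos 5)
Your proof is correct and follows essentially the same route as the paper's: bound $\abs{H(x,y)} = -H(x,y) \ll x^4$ from \eqref{eq:ham} using the a priori thin-region bound $y \ll x^2$, then combine with the defining inequality $My^2 \le \abs{H(x,y)}$, with the power of $M$ treated in the same loose sense as in \Cref{lem:xy}. The only cosmetic differences are that the paper simply drops the two negative terms of $-H$ rather than bounding $y^2$ by $x^4$, and takes the sign $H<0$ in the thin region as given rather than re-deriving it.
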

\begin{proof} By \eqref{eq:ham}, $|H(x,y)|=-H(x,y) \le G(x)-(1/2)h(x)y+(1/12)h'(x)y^2 \ll x^4$. Also, by assumption, $My^{2} \le |H(x,y)|$. Together, they imply the result.
\end{proof}

\begin{lem} \label{lem:Hx} For $M$ sufficiently large and $(x,y) \in \bP'_{M}$, 
  \begin{equation} \label{eq:Hx} 
     \abs{H(x,y)} \asymp x^4
  \end{equation}
\end{lem}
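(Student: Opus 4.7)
The plan is to expand $H(x,y)$ using its explicit form \eqref{eq:ham} together with the local Taylor expansion $h(x)=bx^{3}+\cO(x^{5})$, so that $G(x)=\tfrac{b}{4}x^{4}+\cO(x^{6})$. Since near zero $G(x)\asymp x^{4}$, the aim is to show that the remaining terms in $H(x,y)$ are negligible compared to $G(x)$ in $\bP'_{M}$, for $M$ large and $|x|$ small.

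First I would write, for $(x,y)\in \bP'_{M}$,
\begin{equation*}
-H(x,y) \;=\; G(x) - \tfrac{1}{2}y^{2} - \tfrac{1}{2}h(x)y + \tfrac{1}{12}h'(x)y^{2} - \tfrac{1}{12}h(x)^{2}.
\end{equation*}
Then I would estimate each non-$G$ term using \Cref{lem:yxthin}, which gives $y\ll x^{2}/M$. Specifically: $y^{2} \ll x^{4}/M^{2}$; $|h(x)|\,y \ll |x|^{3}\cdot x^{2}/M \ll x^{4}/M$ (absorbing one power of $|x|$ into the small-$x$ assumption); $|h'(x)|y^{2} \ll x^{2}\cdot x^{4}/M^{2} \ll x^{4}/M^{2}$; and $h(x)^{2}\ll x^{6}\ll x^{4}$. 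Combining these and the expansion $G(x)\ge \tfrac{b}{4}x^{4} - C x^{6}$, I get
\begin{equation*}
-H(x,y) \;\ge\; \tfrac{b}{4}x^{4} - C\bigl(|x|^{2}+ M^{-1}\bigr)x^{4},
\end{equation*}
so choosing $M$ large and restricting to a small enough neighborhood of $\bp$ yields $|H(x,y)|=-H(x,y)\gg x^{4}$, the lower bound in \eqref{eq:Hx}. For the upper bound, the triangle inequality together with the same bounds on the auxiliary terms gives $|H(x,y)|\le G(x)+\cO(x^{4}/M + x^{5}+x^{6})\ll x^{4}$.

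The only mildly delicate point is the sign bookkeeping: since $x\le 0$ and $y\ge 0$ with $h$ odd, the term $-\tfrac{1}{2}h(x)y$ is non-negative and actually helps for the lower bound, while $-\tfrac{1}{2}y^{2}-\tfrac{1}{12}h(x)^{2}$ is the only sign-negative contribution to $-H$; both are easily absorbed by the $x^{4}/M$ and $x^{6}$ estimates above. I expect no conceptual obstacle, the argument being essentially a careful repetition of the structure used to prove \Cref{lem:largeM} in the fat region, with the roles of $x^{2}$ and $y$ interchanged.
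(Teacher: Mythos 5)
Your proof is correct and follows essentially the same route as the paper's: expand $H$ via \eqref{eq:ham}, note $G(x)\asymp x^{4}$, and use $y\ll x^{2}/M$ from \Cref{lem:yxthin} to absorb the remaining terms for $M$ large and $|x|$ small. The paper's argument is simply a terser version of this, with the upper bound delegated to the computation already done in the proof of \Cref{lem:yxthin}.
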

\begin{proof}
    By \eqref{eq:ham}, $-H(x,y) \ge -(1/2)y^2+\cO(x^4)$. By \eqref{eq:yxthin}, this expression is $\gg x^4$ if $M$ is chosen sufficiently large. This proves the lower bound in \eqref{eq:Hx}. The upper bound was shown in the proof of \Cref{lem:yxthin}. 
\end{proof}

\begin{lem} For $(x,y) \in \bP'_{M}$,
  \begin{equation} \label{eq:sqrtHdiffx}
    \abs{\abs{H \circ \map (x,y)}^{1/2}-\abs{H(x,y)}^{1/2}} \ll x^{6}.
  \end{equation}
\end{lem}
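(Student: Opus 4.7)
The strategy mirrors the proof of \eqref{eq:sqrtHdiff} in the fat region. The identity
\begin{equation*}
\bigl| |a|^{1/2} - |b|^{1/2} \bigr| = \frac{\bigl| |a| - |b| \bigr|}{|a|^{1/2}+|b|^{1/2}} \le \frac{|a-b|}{|a|^{1/2}+|b|^{1/2}}
\end{equation*}
applied to $a = H \circ \map(x,y)$ and $b = H(x,y)$ reduces the problem to estimating the numerator $|H \circ \map - H|$ from above and the denominator $|H|^{1/2} + |H \circ \map|^{1/2}$ from below.

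For the numerator, I would invoke the quasi-Hamiltonian bound \eqref{eq:quasiham}, which gives $|H \circ \map - H| \ll x^8 + y^4$. The key simplification in the thin region $\bP'_M$ is Lemma~\ref{lem:yxthin}: since $y \ll x^2/M$, we have $y^4 \ll x^8$, so the numerator is $\ll x^8$.

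For the denominator, \eqref{eq:Hx} already gives $|H(x,y)|^{1/2} \asymp x^2$, and since the other summand $|H\circ \map(x,y)|^{1/2}$ is nonnegative, the full denominator is $\gg x^2$. Dividing, we obtain the claimed bound $\ll x^8 / x^2 = x^6$.

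The only mild subtlety is taking absolute values inside the square root, since $H$ is negative in the thin region; but the inequality $\bigl||a|-|b|\bigr| \le |a-b|$ handles this cleanly, and no information is needed about where $\map(x,y)$ lies (in particular one does not need $\map(x,y) \in \bP'_M$) because the denominator estimate only uses the lower bound coming from the second summand. There is therefore no real obstacle; the lemma is essentially the thin-region analogue of \eqref{eq:sqrtHdiff} with $y$ replaced by $x^2$, and the calculation is one-line once Lemmas~\ref{lem:yxthin} and \ref{lem:Hx} are in place.
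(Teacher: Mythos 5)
Your proposal is correct and follows essentially the same route as the paper: the same algebraic identity, the numerator bound from \eqref{eq:quasiham} combined with $y \ll x^2$ from \Cref{lem:yxthin} to absorb $y^4$ into $x^8$, and the denominator lower bound $\gg x^2$ from \eqref{eq:Hx}. Your extra care with the absolute values (using $\bigl||a|-|b|\bigr|\le|a-b|$ rather than the paper's bare equality, which strictly speaking presumes both energies have the same sign) is a harmless refinement, not a different argument.
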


\begin{proof} By \eqref{eq:Hx} and \eqref{eq:quasiham} for
  $(x,y) \in \bP'_{M}$,
  \begin{equation*}
    \begin{split}
    \abs{\abs{H \circ \map (x,y)}^{1/2}-\abs{H(x,y)}^{1/2}} &=
        \frac{\abs{ H \circ \map (x,y)-H(x,y)}}{\abs{H \circ \map
        (x,y)}^{1/2}+\abs{H(x,y)}^{1/2}}\\ &\ll
        \frac{x^{8}+y^{4}}{x^{2}} \ll x^{6}.
    \end{split}
  \end{equation*}
\end{proof}

As before we define the following quantities but using $\bP'_{M}$
instead of $\bP_{M}$.
\begin{defin} Given an initial point $(x,y) \in \wholetail$ let
  $E_{k} = E_{k}(x,y) = H(x_{k}, y_{k})$,
  \begin{equation*}n = \max{\{k \in \bN: y_{k} \ge 0\}}, \quad \ell=\ell(x,y) =
    \min\{k \le n : (x_{k}, y_{k}) \in \bP'_{M}\}.\end{equation*}
\end{defin}

\begin{lem} For all $\ell \le k \le n$,
  \begin{equation} \label{eq:Esqrt_chx}
    \abs{\abs{E_{k}}^{1/2}-\abs{E_{\ell}}^{1/2}} =
    \abs{\abs{H(x_{k},y_{k})}^{1/2}-\abs{H(x_{\ell},y_{\ell})}^{1/2}}
    \ll \abs{x_{\ell}}^{3}y_{\ell}.
    \end{equation}
\end{lem}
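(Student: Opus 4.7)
The plan is to mirror the fat-region argument in the proof of \eqref{eq:Esqrt_ch}, swapping the roles of the $x$- and $y$-coordinates. First, applying \eqref{eq:sqrtHdiffx} iteratively along the orbit segment from index $\ell$ to index $k$ and using the triangle inequality yields
\begin{equation*}
\abs{\abs{E_k}^{1/2} - \abs{E_\ell}^{1/2}} \le \sum_{j=\ell}^{k-1}\abs{\abs{E_{j+1}}^{1/2}-\abs{E_j}^{1/2}} \ll \sum_{j=\ell}^{k-1}\abs{x_j}^6.
\end{equation*}

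Next, I would observe that throughout the relevant orbit segment $\abs{x_j}$ is non-increasing. Indeed, for $\ell \le j \le n$ the orbit lies in the second quadrant ($x_j \le 0$ and $y_j \ge 0$ by the definition of $n$ in the thin region), and \eqref{eq:yn} gives $x_{j+1} - x_j = y_{j+1} \ge 0$ whenever $j+1 \le n$. Hence $\abs{x_j} \le \abs{x_\ell}$ on the indices appearing in the sum, so
\begin{equation*}
\sum_{j=\ell}^{k-1}\abs{x_j}^6 \le \abs{x_\ell}^3 \sum_{j=\ell}^{k-1}\abs{x_j}^3.
\end{equation*}

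Finally, this last sum telescopes in the $y$-direction. From \eqref{eq:map} one has $y_{j+1}-y_j = h(x_j)$, and the expansion $h(x)=bx^3+\cO(x^5)$ together with $x_j \le 0$ gives $\abs{x_j}^3 \asymp -h(x_j) = y_j - y_{j+1}$ for $\abs{x_j}$ sufficiently small. Therefore
\begin{equation*}
\sum_{j=\ell}^{k-1}\abs{x_j}^3 \ll \sum_{j=\ell}^{k-1}(y_j - y_{j+1}) = y_\ell - y_k \le y_\ell,
\end{equation*}
and combining the three estimates yields \eqref{eq:Esqrt_chx}.

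The argument is a structural dual of the fat-region computation and contains no substantive obstacle: the only real insight is the choice of telescoping identity. In the fat region, one writes $y_j = x_j - x_{j-1}$ to turn a sum of $\abs{y_j}^3$ into an $x$-telescope. In the thin region, where $x$ moves slowly and $y$ carries the motion across the axis, the natural dual is $-h(x_j) = y_j - y_{j+1}$, which turns a sum of $\abs{x_j}^3$ into a $y$-telescope. Once this substitution is made, the monotonicity of $\abs{x_j}$ (immediate from the sign structure in the second quadrant) supplies the missing factor $\abs{x_\ell}^3$.
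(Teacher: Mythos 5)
Your proof is correct and is essentially the paper's own argument: telescope \eqref{eq:sqrtHdiffx} to get $\sum_{j=\ell}^{k-1} x_j^6$, extract $\abs{x_\ell}^3$ via the monotonicity of $\abs{x_j}$, and convert the remaining $\sum\abs{x_j}^3$ into the $y$-telescope $\sum(y_j-y_{j+1}) \le y_\ell$ using $y_{j+1}-y_j=h(x_j)\asymp -\abs{x_j}^3$. The paper compresses these steps into one displayed chain of inequalities; your version merely makes the intermediate justifications explicit.
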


\begin{proof} By \eqref{eq:sqrtHdiffx},
  \begin{equation*}
    \begin{split}
      \abs{\abs{E_{k}}^{1/2}-\abs{E_{\ell}}^{1/2}} &=
      \abs{\abs{H(x_{k},y_{k})}^{1/2}-\abs{H(x_{\ell},y_{\ell})}^{1/2}}
      \\ &\ll
      \sum_{j=\ell}^{k-1}x_{j}^{6} \ll
      \abs{x_{\ell}}^{3}\sum_{j=\ell}^{k-1}(y_{j}-y_{j+1}) \ll
      \abs{x_{\ell}}^{3}y_{\ell}.
    \end{split}
  \end{equation*}
\end{proof}

\begin{rem} \label{rem:xykEk_t} If $(x_{k},y_{k}) \in \bP'_{M}$, then
    by definition of $\bP'_M$ and \eqref{eq:Hx},
    \begin{equation} \label{eq:xykEk_t} 
      \abs{y_{k}} \le M^{-1/2}\abs{E_{k}}^{1/2}, \text{ and } x_{k} \asymp \abs{E_{k}}^{1/4}.
    \end{equation}
  \end{rem}

\begin{lem} \label{lem:precisionx} For $\ell \le k \le n$,
  \begin{equation} \label{eq:precisionx}
    \abs{x_{k}} \asymp \abs{E_\ell}^{1/4}
  \end{equation}
\end{lem}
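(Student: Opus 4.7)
The plan is to mirror the proof of Lemma~\ref{lem:precision} from the fat region, with the roles of $x$ and $y$ exchanged, reflecting the fact that in the thin region $|x|$ is the dominant coordinate rather than $y$. The goal is to establish $|x_k|\asymp |E_\ell|^{1/4}$, which is the precise analog of the fat-region estimate $y_k\asymp E_\ell^{1/2}$.

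First I would quantify the drift of $|E_k|^{1/2}$ from $|E_\ell|^{1/2}$. By \eqref{eq:Esqrt_chx} combined with the estimates of \eqref{eq:xykEk_t} applied at $k=\ell$ (where $(x_\ell,y_\ell)\in\bP'_M$ by definition of $\ell$), one obtains
\begin{equation*}
    \bigl||E_k|^{1/2}-|E_\ell|^{1/2}\bigr| \ll |x_\ell|^3 y_\ell \ll M^{-1/2}|E_\ell|^{5/4},
\end{equation*}
which is negligible compared to $|E_\ell|^{1/2}$ for points sufficiently close to $\bp$. This yields $|E_k|\asymp |E_\ell|$ for all $\ell\le k\le n$.

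Next (and this is where I expect the main subtlety) I would verify that the trajectory remains in $\bP'_M$ for every $\ell\le k\le n$, so that \eqref{eq:xykEk_t} can be invoked at arbitrary $k$ in this range. In the second quadrant $x_k\le 0$, so condition (3) gives $h(x_k)<0$, whence \eqref{eq:yn} implies $y_{k+1}=y_k+h(x_k)<y_k$; that is, $y_k$ is non-increasing along the orbit. Since the first step shows $|E_k|$ is essentially constant while $y_k$ can only decrease, the defining inequality $My^2\le |H|$ of $\bP'_M$ is preserved from $k=\ell$ onwards.

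Having placed $(x_k,y_k)\in \bP'_M$, a second application of \eqref{eq:xykEk_t} gives $|x_k|\asymp |E_k|^{1/4}\asymp |E_\ell|^{1/4}$, concluding the argument. The key nontrivial inputs — smallness of the energy drift and the geometric fact that the orbit stays in $\bP'_M$ — are isolated in the two steps above; everything else is the same kind of algebraic manipulation as in the fat case, with $h(x_k)<0$ on the second quadrant playing the role that the definition of $\ell$ played there.
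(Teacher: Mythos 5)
Your proof is correct and follows essentially the same route as the paper, which simply writes $x_k = x_\ell + (x_k - x_\ell)$ and invokes \eqref{eq:Esqrt_chx} and \eqref{eq:xykEk_t}; your version spells out the two points the paper leaves implicit, namely that the energy drift $\ll |E_\ell|^{5/4}$ is negligible and that monotonicity of $y_k$ keeps the orbit in (an inconsequentially relaxed) $\bP'_M$ so that \eqref{eq:xykEk_t} applies at time $k$.
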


\begin{proof} Write
  $x_{k} = x_{\ell} + (x_{k}-x_{\ell})$ and apply \eqref{eq:Esqrt_chx} and \eqref{eq:xykEk_t}.
\end{proof}

\begin{lem}
  \begin{equation} \label{eq:ellminusonex} \abs{E_{\ell-1}}^{1/2}-
    \abs{E_{\ell}}^{1/2} \ll \abs{E_{\ell}}^{3/4}.
  \end{equation}
\end{lem}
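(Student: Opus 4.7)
The target inequality \eqref{eq:ellminusonex} is the thin-region analogue of \eqref{eq:ellminusone}, so I plan to retrace the fat-region argument with the roles of $x$ and $y$ exchanged. Inside $\bP'_M$ it is now $|x_k|$ that is pinned to $|E_k|^{1/4}$, while $|y_k|$ is the small quantity bounded by $M^{-1/2}|E_k|^{1/2}$ (Remark~\ref{rem:xykEk_t}).

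The first step is to unpack the definition of $\ell$. Since $(x_\ell,y_\ell)\in\bP'_M$, we obtain $y_\ell\le M^{-1/2}|E_\ell|^{1/2}$. Because $\ell-1\le n$ we have $y_{\ell-1}\ge 0$, and because in the thin (second-quadrant) region the trajectory stays on the side $x_{\ell-1}\le 0$, the only way for $(x_{\ell-1},y_{\ell-1})$ to be outside $\bP'_M$ is the failure of $My_{\ell-1}^2\le |E_{\ell-1}|$; this yields $y_{\ell-1}\ge M^{-1/2}|E_{\ell-1}|^{1/2}$. Subtracting the two bounds and using \eqref{eq:yn} (which gives $y_{\ell-1}-y_\ell=-h(x_{\ell-1})$) produces
\begin{equation*}
M^{-1/2}\bigl(|E_{\ell-1}|^{1/2}-|E_\ell|^{1/2}\bigr)\le y_{\ell-1}-y_\ell=-h(x_{\ell-1}).
\end{equation*}

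The second step is to control $|x_{\ell-1}|$. By Remark~\ref{rem:xykEk_t} we know $|x_\ell|\asymp |E_\ell|^{1/4}$, and by \eqref{eq:yn} together with the $\bP'_M$-bound just derived, $|x_\ell-x_{\ell-1}|=|y_\ell|\le M^{-1/2}|E_\ell|^{1/2}\ll |E_\ell|^{1/4}$ once $(x_0,y_0)$ is sufficiently close to $\bp$. Hence $|x_{\ell-1}|\asymp |E_\ell|^{1/4}$; since $h(x)=bx^3+\cO(x^5)$ near zero, this gives $|h(x_{\ell-1})|\asymp |x_{\ell-1}|^3\ll |E_\ell|^{3/4}$, which substituted into the displayed inequality closes the argument.

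\textbf{Main obstacle.} I do not foresee any genuine difficulty: the proof is a line-by-line transcription of the proof of \eqref{eq:ellminusone}, and every required ingredient (Remark~\ref{rem:xykEk_t}, equation \eqref{eq:yn}, the cubic expansion of $h$) is already available. The one point that warrants care is verifying that the failure of $(x_{\ell-1},y_{\ell-1})\in \bP'_M$ really comes from the inequality $My_{\ell-1}^2>|E_{\ell-1}|$ rather than from a sign violation; this uses the standing assumption that the whole trajectory under consideration lies in the second-quadrant thin region for $0\le k\le n$.
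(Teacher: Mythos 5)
Your proof is correct and follows essentially the same route as the paper's: both unpack the definition of $\ell$ to get $M^{-1/2}(|E_{\ell-1}|^{1/2}-|E_{\ell}|^{1/2})\le y_{\ell-1}-y_{\ell}=-h(x_{\ell-1})\ll|x_{\ell-1}|^{3}$ via \eqref{eq:yn}, and then bound $|x_{\ell-1}|=|x_{\ell}-y_{\ell}|\ll|E_{\ell}|^{1/4}$ using Remark~\ref{rem:xykEk_t}. Your extra care about why $(x_{\ell-1},y_{\ell-1})\notin\bP'_M$ forces $My_{\ell-1}^{2}>|E_{\ell-1}|$ is a reasonable clarification of a point the paper leaves implicit.
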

\begin{proof} By definition of $\ell$,
  $M^{-1/2}\abs{E_{\ell-1}}^{1/2} \le y_{\ell-1} $ and
  $M^{-1/2}\abs{E_{\ell}}^{1/2} \ge y_{\ell}$. Therefore,
  \begin{equation*} M^{-1/2}\abs{E_{\ell-1}}^{1/2} - M^{-1/2}\abs{E_{\ell}}^{1/2} \le
    y_{\ell-1}-y_{\ell} \ll \abs{x_{\ell-1}}^{3} =
    \abs{x_{\ell}-y_{\ell}}^{3} \ll E_{\ell}^{3/4}.
  \end{equation*}
\end{proof}

Now we relate the time and the energy.
\begin{lem} \label{lem:timeinx}
  $ n-\ell \asymp \abs{E_{\ell}}^{-1/2} $.
\end{lem}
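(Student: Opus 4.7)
The plan is to mirror the proof of \Cref{lem:timeout} from the fat region in the thin region by interchanging the roles of the two coordinates. In the fat region, the quantity $n-\ell$ was bounded by comparing $x$-travel (from $x_\ell$ down to $x_n\approx 0$) to the step size via $x_{k+1}-x_k = y_{k+1}$. In the thin region we now use $y$-travel (from $y_\ell > 0$ down to $y_n \approx 0$) together with the step size encoded in $y_{k+1}-y_k = h(x_k)$, so that $|h(x_k)|$ plays the role that $|y_{k+1}|$ played before.

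For the upper bound I would combine \Cref{rem:xykEk_t}, which gives $y_\ell \le M^{-1/2}|E_\ell|^{1/2}$, with the telescoping identity
\[
y_\ell \;\ge\; y_\ell - y_n \;=\; \sum_{k=\ell}^{n-1}|h(x_k)|,
\]
together with \Cref{lem:precisionx} (which guarantees $|x_k|\asymp |E_\ell|^{1/4}$ throughout $[\ell,n]$, hence $|h(x_k)|\asymp |x_k|^3 \asymp |E_\ell|^{3/4}$). To ensure $y_n$ is negligible relative to $y_\ell$ in this chain, I would note that $y_n\ge 0 > y_{n+1}=y_n+h(x_n)$, so $y_n\le |h(x_n)| \asymp |E_\ell|^{3/4}$, which is much smaller than $y_\ell$.

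For the matching lower bound, I would exploit that at time $\ell-1$ the orbit is not yet in $\bP'_M$, so $y_{\ell-1} > M^{-1/2}|E_{\ell-1}|^{1/2}$, together with the extended chain
\[
y_{\ell-1} \;\le\; y_{\ell-1} - y_{n+1} \;=\; \sum_{k=\ell-1}^{n}|h(x_k)| \;\ll\; (n-\ell+2)|E_\ell|^{3/4}.
\]
Invoking \eqref{eq:ellminusonex} to show $|E_{\ell-1}|^{1/2}\asymp |E_\ell|^{1/2}$ (since the error term $\cO(|E_\ell|^{3/4})$ is subdominant for small $|E_\ell|$) then closes the lower bound in the same manner as the final step of the proof of \Cref{lem:timeout}.

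The step I expect to be most delicate is the exponent bookkeeping needed to land exactly on the claimed $|E_\ell|^{-1/2}$: a naive comparison of total $y$-travel $\asymp |E_\ell|^{1/2}$ against step size $\asymp |E_\ell|^{3/4}$ tracks like the fat-region proof, so the factors of $M$ and the tangential behavior of the orbit near the turning point $\{y=0\}$ must be accounted for with extra care (in contrast to the fat region, where the orbit crosses $\{x=0\}$ transversely). In particular, one must verify that the contributions from the iterates $k$ close to $n$, where $y_k$ becomes comparable to a single step $|h(x_k)|$, do not spoil the asymptotics, and that the bounds in \Cref{lem:precisionx} are actually sharp uniformly in $k \in [\ell,n]$ rather than merely in a small subinterval.
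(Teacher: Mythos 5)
Your argument is exactly the paper's: the upper bound on $n-\ell$ comes from $M^{-1/2}\abs{E_\ell}^{1/2}\ge y_\ell\ge y_\ell-y_n=\sum_{k=\ell}^{n-1}\abs{h(x_k)}\gg (n-\ell-1)\abs{E_\ell}^{3/4}$, and the lower bound from $\abs{E_{\ell-1}}^{1/2}\ll y_{\ell-1}\le y_{\ell-1}-y_{n+1}\ll (n-\ell+2)\abs{E_\ell}^{3/4}$ combined with \eqref{eq:ellminusonex}. The worries you list at the end are non-issues: the tangential approach to $\{y=0\}$ is handled automatically by sandwiching between $y_n\ge 0$ (used in the upper chain) and $y_{n+1}<0$ (used in the lower chain), so you never need to quantify how small $y_n$ is; and the uniformity of $\abs{x_k}\asymp\abs{E_\ell}^{1/4}$ over all of $[\ell,n]$ is precisely the content of \Cref{lem:precisionx}. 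The one point you should have pushed to its conclusion is the exponent bookkeeping itself: dividing the total $y$-travel $\asymp\abs{E_\ell}^{1/2}$ by the step size $\asymp\abs{E_\ell}^{3/4}$ gives $n-\ell\asymp\abs{E_\ell}^{-1/4}$, not $\abs{E_\ell}^{-1/2}$, and no factor of $M$ or boundary effect upgrades this. The stated exponent $-1/2$ is a typo in the lemma: the paper's own two displayed chains produce exactly $\abs{E_\ell}^{-1/4}$, and only $\abs{E_\ell}^{-1/4}$ is consistent with \Cref{lem:timeoutx} ($\ell\asymp\abs{E_\ell}^{-1/4}$), with the subsequent deduction that a boundary point of $\imreg_k'$ on $\map^{-1}\{y=0\}$ has $\abs{x}\asymp k^{-1}$, and with the symmetric count in the fat region. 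So your proof is correct and matches the paper's; it proves the corrected statement $n-\ell\asymp\abs{E_\ell}^{-1/4}$, not the one as printed.
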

  
\begin{proof} By \Cref{rem:xykEk_t} and \Cref{lem:precisionx},
  \begin{equation*}
    \begin{split}
      \abs{E_{\ell}}^{1/2} &\gg y_{\ell} \ge y_{\ell}-y_{n} \gg
      \sum_{k=\ell}^{n-1}\abs{x_{k}}^{3} \gg (n-\ell-1)
      \abs{E_{\ell}}^{3/4} \\
      \abs{E_{\ell-1}}^{1/2} &\ll y_{\ell-1}\le y_{\ell-1}-y_{n+1} \ll
      \sum_{k=\ell-1}^{n}\abs{x_{k}}^{3} =
      (n-\ell+1)\abs{E_{\ell}}^{3/4}.
    \end{split}
  \end{equation*} The result follows because, by
  \eqref{eq:ellminusonex},
  \begin{equation*} \abs{E_{\ell-1}}^{1/2} = \abs{E_{\ell}}^{1/2}+(\abs{E_{\ell-1}}^{1/2}-
    \abs{E_{\ell}}^{1/2}) \gg
    \abs{E_{\ell}}^{1/2}.
  \end{equation*}
\end{proof}

We shall not use the following lemma, nevertheless we state it since it is a symmetrical statement to \Cref{lem:xell}.
\begin{lem}
$\abs{y_{\ell}} \gg \abs{E_{\ell}}^{1/2}$.
\end{lem}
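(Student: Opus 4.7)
The plan is to mirror the argument used for Lemma \ref{lem:xell} in the fat region, interchanging the roles of $x$ and $y$ throughout and using the thin-region analogues of the various estimates.

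First, I would extract a lower bound on $y_{\ell-1}$ from the defining property of $\ell$. Since $(x_{\ell-1}, y_{\ell-1}) \notin \bP'_{M}$ by minimality of $\ell$, the definition of $\bP'_M$ gives $M y_{\ell-1}^{2} > \abs{E_{\ell-1}}$, so $y_{\ell-1} \ge M^{-1/2}\abs{E_{\ell-1}}^{1/2}$. Next, using $y_{\ell} = y_{\ell-1} + h(x_{\ell-1})$ from the definition of $\map$, and the triangle inequality, I would write $\abs{y_\ell} \ge y_{\ell-1} - \abs{h(x_{\ell-1})}$.

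The remaining task is to show that the correction $\abs{h(x_{\ell-1})}$ is of lower order than $\abs{E_\ell}^{1/2}$. Since $h(x) = b x^{3} + \cO(x^{5})$ near $0$, it suffices to control $\abs{x_{\ell-1}}$. Using $x_{\ell-1} = x_{\ell} - y_{\ell}$ (from \eqref{eq:yn}) together with the thin-region bounds $\abs{x_{\ell}} \asymp \abs{E_{\ell}}^{1/4}$ (Remark \ref{rem:xykEk_t}) and $y_{\ell} \le M^{-1/2}\abs{E_{\ell}}^{1/2}$, and the fact that we are working in a small neighbourhood of $\bp$ so that $\abs{E_\ell}^{1/2} \ll \abs{E_\ell}^{1/4}$, we get $\abs{x_{\ell-1}} \ll \abs{E_\ell}^{1/4}$, and hence $\abs{h(x_{\ell-1})} \ll \abs{E_\ell}^{3/4}$. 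Finally, \eqref{eq:ellminusonex} gives $\abs{E_{\ell-1}}^{1/2} = \abs{E_\ell}^{1/2} + \cO(\abs{E_\ell}^{3/4}) \gg \abs{E_\ell}^{1/2}$. Combining,
\begin{equation*}
  \abs{y_{\ell}} \ge M^{-1/2}\abs{E_{\ell-1}}^{1/2} - \cO(\abs{E_\ell}^{3/4}) \gg \abs{E_\ell}^{1/2},
\end{equation*}
which is the desired bound.

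There is no real obstacle here: the only mildly delicate point is verifying that $\abs{x_{\ell-1}} \ll \abs{E_\ell}^{1/4}$ even though Lemma \ref{lem:precisionx} is only stated for indices $k \ge \ell$. This is handled by one application of \eqref{eq:yn} and the smallness of $\abs{E_\ell}$ guaranteed by Remark \ref{rem:largeN}, exactly as in the proof of Lemma \ref{lem:xell}.
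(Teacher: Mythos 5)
Your argument is correct and takes essentially the same route as the paper's own proof: express $y_{\ell}$ via the recursion $y_{k+1}=y_{k}+h(x_{k})$, lower-bound $y_{\ell-1}\ge M^{-1/2}\abs{E_{\ell-1}}^{1/2}$ from the minimality of $\ell$, show the cubic correction is $\cO(\abs{E_{\ell}}^{3/4})$, and conclude with \eqref{eq:ellminusonex}. If anything you are slightly more careful than the paper, which writes $h(x_{\ell})$ where the recursion gives $h(x_{\ell-1})$ (harmless, since $\abs{x_{\ell-1}}\asymp\abs{x_{\ell}}$, as you verify).
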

\begin{proof} Using \Cref{rem:xykEk_t} and \eqref{eq:precisionx},
\begin{equation*}
\begin{split}
\abs{y_{\ell}} = \abs{y_{\ell-1}+h(x_{\ell})} &\gg
\abs{y_{\ell-1}}-\abs{x_{\ell}}^{3} \\ &\gg
\abs{E_{\ell-1}}^{1/2}-\cO(\abs{E_{\ell}}^{3/4})
\gg \abs{E_{\ell}}^{1/2}.
\end{split}
\end{equation*}
\end{proof}

Let $\xi_{\ell}, \zeta_{\ell}$ be as in \Cref{lem:blah}.
\begin{lem}
  $0 \le \abs{x_{\ell}}-\abs{\xi_{\ell}} \ll \abs{E_{\ell}}^{1/4}$.
\end{lem}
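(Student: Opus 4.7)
The plan is to mirror the proof of \Cref{lem:blah}, exchanging the roles played by $x$ and $y$ in the geometry so as to match the thin-region picture. By \eqref{eq:yxthin}, the point $(x_\ell,y_\ell)$ satisfies $y_\ell\ll x_\ell^2$, which places it \emph{below} the local stable manifold $\gamma_s$ of $\bp$; recall that $\gamma_s$ has the parabolic shape $\gamma_s(\xi)\asymp\xi^2$ near $\bp$ (this is the parabolic nature of stable manifolds used repeatedly above, see \cite[Lemma~3.5]{LM} and the paragraph preceding \Cref{lem:stabledyn}). Since \eqref{eq:ucone} forces the unstable curve $U_\ell$ through $(x_\ell,y_\ell)$ to be strictly increasing, while $\gamma_s$ decreases to $0$ as $\xi\to 0^-$ in this sector, the two curves must intersect at a point $(\xi_\ell,\zeta_\ell)$ with $\xi_\ell\in(x_\ell,0]$, so the lower bound $|\xi_\ell|\le|x_\ell|$ is immediate.

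For the upper bound I would repeat the integration of \Cref{lem:blah} with the interval of integration reversed. For every $z\in(x_\ell,\xi_\ell)$ one has $|z|\ge|\xi_\ell|$ and, by monotonicity of $U_\ell$, $U_\ell(z)\ge U_\ell(x_\ell)=y_\ell\ge 0$; hence \eqref{eq:ucone} gives
\begin{equation*}
U_\ell'(z)\ge K_-\bigl(|\xi_\ell|+\sqrt{y_\ell}\bigr)\ge K_-|\xi_\ell|.
\end{equation*}
Integrating from $x_\ell$ to $\xi_\ell$ yields $\zeta_\ell-y_\ell\ge K_-|\xi_\ell|(\xi_\ell-x_\ell)$, so
\begin{equation*}
|x_\ell|-|\xi_\ell| = \xi_\ell-x_\ell \le \frac{\zeta_\ell}{K_-|\xi_\ell|}.
\end{equation*}

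Combining the parabolic bound $\zeta_\ell\ll\xi_\ell^2$ with \eqref{eq:xykEk_t}, which provides $|x_\ell|\asymp|E_\ell|^{1/4}$, one concludes
\begin{equation*}
|x_\ell|-|\xi_\ell| \ll \frac{\xi_\ell^2}{|\xi_\ell|} = |\xi_\ell| \le |x_\ell| \ll |E_\ell|^{1/4},
\end{equation*}
as required. The step I expect to need the most care is the uniform validity of the parabolic upper bound $\gamma_s(\xi)\ll\xi^2$ near $\bp$, which underlies both the existence of the intersection point $(\xi_\ell,\zeta_\ell)$ with $|\xi_\ell|\le|x_\ell|$ and the final numerical estimate; but this is already part of the framework used throughout this section, so the proof should go through without further technical input.
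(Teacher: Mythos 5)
Your proof is correct and follows essentially the same route as the paper: bound $U_\ell'$ from below via the cone condition \eqref{eq:ucone}, integrate between $x_\ell$ and $\xi_\ell$, and divide the height difference by the minimal slope, using \eqref{eq:xykEk_t} to convert to $\abs{E_\ell}^{1/4}$. If anything, your write-up is more careful than the paper's, which reuses the fat-region orientation verbatim (writing $\xi_\ell < z < x_\ell$ and the numerator $y_\ell-\zeta_\ell$) even though in the thin region the intersection point lies between $x_\ell$ and $0$; your bound $\zeta_\ell \ll \xi_\ell^2$ in place of the paper's $y_\ell \ll \abs{E_\ell}^{1/2}$ yields the same conclusion.
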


\begin{proof} The lower bound clearly follows from \eqref{eq:ucone}
  which implies that the unstable curves are increasing. To prove
  the upper bound, note that for every $\xi_{\ell}<z < x_{\ell}$,
  \begin{equation*} U_{\ell}'(z) \ge K_{-}(\abs{z}+\sqrt{\abs{U_{\ell}(z)}} )\ge
    K_{-}(\abs{x_{\ell}} +\sqrt{\zeta_{\ell}}) .
  \end{equation*}

Using \Cref{rem:xykEk_t} and \eqref{eq:precisionx} we have
  \begin{equation*}
    \abs{x_{\ell}}-\abs{\xi_{\ell}} \le
    \frac{y_{\ell}-\zeta_{\ell}}{K_{-}(\abs{x_{\ell}}
    +\sqrt{\zeta_{\ell}})} \ll
    \frac{y_{\ell}}{\abs{x_{\ell}}} \ll
    \frac{\abs{E_{\ell}}^{1/2}}{\abs{E_{\ell}}^{1/4}} \ll \abs{E_{\ell}}^{1/4}.
  \end{equation*}
\end{proof}

\begin{lem} \label{lem:timeoutx}
  $ \ell \asymp \abs{E_{\ell}}^{-1/4}$.
\end{lem}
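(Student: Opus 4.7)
The plan is to follow the proof of \Cref{lem:timein}, with adaptations forced by the thin-region geometry. By the symmetry $h(-x)=-h(x)$, the shadow trajectory $\{\xi_j\}_{j=0}^\ell$ on the local stable manifold of $\bp$ lies in $x<0$, and \eqref{eq:stabledyn} yields $\abs{\xi_\ell}\asymp A/(\ell+A/\abs{\xi_0})$, i.e.\ $\ell\asymp A/\abs{\xi_\ell}-A/\abs{\xi_0}$. Since the initial point sits outside $\pnbd$, $\abs{\xi_0}$ is bounded below by a universal constant, so $\ell\asymp\abs{E_\ell}^{-1/4}$ will follow once we show $\abs{\xi_\ell}\asymp\abs{E_\ell}^{1/4}$.

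The upper bound $\abs{\xi_\ell}\ll\abs{E_\ell}^{1/4}$, hence $\ell\gg\abs{E_\ell}^{-1/4}$, is immediate: the previous lemma gives $\abs{\xi_\ell}\le\abs{x_\ell}$, and \eqref{eq:precisionx} gives $\abs{x_\ell}\asymp\abs{E_\ell}^{1/4}$.

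The matching lower bound $\abs{\xi_\ell}\gg\abs{E_\ell}^{1/4}$ is the main obstacle, since the fat-region trick in \Cref{lem:timein}---which used $\abs{\xi_{\ell-1}}\ge\abs{x_{\ell-1}}\gg E_{\ell-1}^{1/4}$, with the first inequality coming from the fat-region geometry and the second from $(x_{\ell-1},y_{\ell-1})\notin\bP_M$---does not port over: geometrically $\abs{\xi_k}\le\abs{x_k}$ in the thin region, and $(x_{\ell-1},y_{\ell-1})\notin\bP'_M$ constrains only $y_{\ell-1}$, not $x_{\ell-1}$. Instead I would integrate the unstable-slope bound \eqref{eq:ucone} along the local unstable curve $U_\ell$ from $x_\ell$ to $\xi_\ell$:
\[
\zeta_\ell-y_\ell=\int_{x_\ell}^{\xi_\ell}U_\ell'(z)\,dz\ge K_-\int_{x_\ell}^{\xi_\ell}\abs{z}\,dz=\tfrac{K_-}{2}\bigl(\abs{x_\ell}^2-\abs{\xi_\ell}^2\bigr).
\]
On the other hand, since $(\xi_\ell,\zeta_\ell)$ lies on the stable manifold---where $H=0$ in \eqref{eq:ham} forces $y\asymp x^2$ near $\bp$---and $y_\ell\ge 0$, the left-hand side is at most $\zeta_\ell\ll\abs{\xi_\ell}^2$. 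Combining the two gives $\abs{x_\ell}^2\ll\abs{\xi_\ell}^2$, so $\abs{\xi_\ell}\gg\abs{x_\ell}\asymp\abs{E_\ell}^{1/4}$, and therefore $\ell\ll\abs{E_\ell}^{-1/4}$.
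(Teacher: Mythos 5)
Your proof is correct, and for the hard direction it takes a genuinely different (and in fact sounder) route than the paper. The easy direction is identical: $\abs{\xi_\ell}\le\abs{x_\ell}\ll\abs{E_\ell}^{1/4}$ gives $\ell\gg\abs{E_\ell}^{-1/4}$. For $\ell\ll\abs{E_\ell}^{-1/4}$ the paper simply transplants the fat-region step, writing $\abs{\xi_{\ell-1}}\ge\abs{x_{\ell-1}}=\abs{x_\ell-y_\ell}\gg\abs{E_\ell}^{1/4}$; but, as you correctly observe, in the thin region the point $(x_{\ell-1},y_{\ell-1})$ lies \emph{below} $\gamma_s$ and the unstable curve through it is increasing, so the intersection with $\gamma_s$ satisfies $\abs{\xi_{\ell-1}}\le\abs{x_{\ell-1}}$ --- the inequality the paper invokes points the wrong way here (it is the fat-region inequality of \Cref{lem:blah}). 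Your replacement --- integrating $U_\ell'\ge K_-\abs{z}$ from $x_\ell$ to $\xi_\ell$ to get $\tfrac{K_-}{2}(\abs{x_\ell}^2-\abs{\xi_\ell}^2)\le\zeta_\ell-y_\ell\le\zeta_\ell\ll\abs{\xi_\ell}^2$, hence $\abs{\xi_\ell}\gg\abs{x_\ell}\asymp\abs{E_\ell}^{1/4}$ by \eqref{eq:xykEk_t} --- is quantitatively harmless and uses only \eqref{eq:ucone}, $y_\ell\ge 0$, and the parabolic bound $\gamma_s(z)\ll z^2$ on the stable manifold (which is \cite[Lemma~3.5]{LM}, already quoted in Section~\ref{subsec:fat}, rather than literally ``$H=0$''; you should cite it as such since $H$ is only a quasi-Hamiltonian). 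The one implicit step you share with the paper is discarding the $A/\abs{\xi_0}$ term, which is justified by $\abs{\xi_0}$ being bounded below and $\ell$ being large as in \Cref{rem:largeN}. Net effect: your argument proves the lemma and repairs the paper's step rather than reproducing it.
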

\begin{proof} Since $\{\xi_{j}\}_{j=0}^{\ell}$ lies on the stable
  manifold of
  $\bp$, by \eqref{eq:stabledyn}, we have
  \begin{equation*}
    \xi_{\ell} \asymp \frac{A}{\ell+A/\xi_{0}} \Rightarrow \ell \asymp
    \frac{A}{\xi_{\ell}} - \frac{A}{\xi_{0}}.
  \end{equation*} Since $\abs{\xi_{\ell}} \le \abs{x_{\ell}}\ll \abs{E_{\ell}}^{1/4}$,
  we get
  $\ell \gg \abs{E_{\ell}}^{-1/4}$. On the other hand, by
  \eqref{eq:ellminusonex}, we have
  \begin{equation*}
    \abs{\xi_{\ell-1}} \ge \abs{x_{\ell-1}} = \abs{x_{\ell} -
    y_{\ell}} \gg
    \abs{E_{\ell}}^{1/4} - \abs{E_{\ell}}^{1/2} \gg
    \abs{E_{\ell}}^{1/4},
  \end{equation*} which shows that $\ell \ll E_{\ell}^{-1/4}$.
\end{proof}

The above lemmas show that we can prove the analog of \Cref{prop:fat}
in the thin region. The proof is very similar, nevertheless we take the reader through it.

For $k \in \bN$, define $\reg_k' = \{(x,y) \in \wholetail : n(x,y) =
k\}$. This is the set of points in $\wholetail=\map^{-1}\pnbd\setminus
\pnbd$ that spend exactly $k$ iterations on the top of the $x$-axis.
We assume here that $\reg_k$ is in the thin region and in the second
quadrant.

\begin{lem} For sufficiently large $k$, $\reg_k'$ is the region bounded
  by the unstable sides of $\wholetail$ and the curves
  $\map^{-k}(\{y=0\})$ and $\map^{-(k+1)}(\{y=0\})$. 
\end{lem}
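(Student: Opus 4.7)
The statement is the thin-region analog of the earlier fat-region lemma (``$\reg_k$ is the region bounded by the unstable sides of $\wholetail$ and the curves $\map^{-k}(\{x=0\})$ and $\map^{-(k+1)}(\{x=0\})$''), and I would prove it by following the same scheme with the roles of the $x$- and $y$-axes swapped. The starting point is that $n(x,y)$ in the thin region is defined as $\max\{k : y_k \ge 0\}$, so a point $(x,y) \in \wholetail$ lies in $\reg_k'$ exactly when $y_k \ge 0$ and $y_{k+1} < 0$. This is a pointwise condition saying that $(x,y)$ is sandwiched between the level sets $\map^{-k}(\{y=0\})$ and $\map^{-(k+1)}(\{y=0\})$. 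So all that needs to be checked is that these preimage curves really do cut $\wholetail$ all the way across, from one unstable side to the other.

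First I would verify that, for all large $k$, the set $\map^{-k}(\{y=0\})$ is a smooth curve in the thin part of $\wholetail$. Since the $x$-axis is transverse to the stable direction at $\bp$ and $\map^{-1}$ expands along the stable direction, for large $k$ these preimages cluster on the (local) stable manifold of $\bp$ in the thin region; the quantitative version is given by the parabolic estimates of \Cref{lem:stabledyn} combined with the energy estimates \Cref{lem:precisionx}--\Cref{lem:timeoutx} of \Cref{subsec:thin}. Second, I would observe that the family $\{\map^{-k}(\{y=0\})\}_{k\ge 1}$ is monotone in $k$ and hence its members are pairwise disjoint, because a trajectory escaping the upper half-plane before entering $\pnbd$ crosses $\{y=0\}$ at most once in the relevant window.

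Putting these two points together, the thin part of $\wholetail$ is partitioned by the curves $\map^{-k}(\{y=0\})$ into countably many strips, and the strip sitting between $\map^{-k}(\{y=0\})$ and $\map^{-(k+1)}(\{y=0\})$ is exactly $\reg_k'$, bounded on the sides by the two unstable boundary curves of $\wholetail$.

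The main obstacle I expect is confirming that $\map^{-k}(\{y=0\})$ really reaches from one unstable side of $\wholetail$ to the other, rather than terminating at one of the stable sides. This is handled in the same way as in the fat-region lemma, using the cone bounds \eqref{eq:ucone} together with the fact that for large $k$ the preimage lies in a small neighborhood of the local stable manifold of $\bp$, whose intersections with $\partial \wholetail$ lie on the unstable pieces of the Markov boundary (by the construction of $\cP$ in \Cref{sec:MP}).
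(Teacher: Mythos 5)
Your proposal is correct and follows essentially the same route as the paper's (very terse) proof: the defining condition $n(x,y)=k$ places the point between the level sets $\map^{-k}(\{y=0\})$ and $\map^{-(k+1)}(\{y=0\})$, and for large $k$ these preimages are curves accumulating on the local stable manifold of $\bp$, hence crossing the thin part of $\wholetail$ from one unstable side to the other. The extra details you supply (monotonicity of $y_k$ while $x_k<0$, transversality via the cone bounds) are exactly the points the paper leaves implicit.
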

\begin{proof} For large $k$, the $k$-th preimage of the $x$-axis is a
  curve close to the stable manifold of $\bp$. $\reg_k'$ is the region in $\wholetail$ that lies between the curves
  $\map^{-k}(\{y=0\})$ and $\map^{-(k+1)}(\{y=0\})$. 
\end{proof}

Let us denote $\imreg_k' = \map^k \reg_k'$. For sufficiently large $k$, $\imreg_k'$ is the region bounded by two unstable curves, the curve $\map^{-1}(\{y=0\})$ and the $x$-axis. See \Cref{fig:LM-gron}.

The following lemma gives an estimate on the horizontal distance between the unstable sides of $\imreg_k'$; that is, on the length of a horizontal line segment both of whose endpoints lie on the unstable sides of $\imreg_k'$.
\begin{lem} \label{lem:hlength}
  The horizontal distance between the unstable sides of $\imreg_k'$ is $\asymp k^{-2}$.
\end{lem}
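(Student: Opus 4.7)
The approach mirrors \Cref{lem:vlength}, with the horizontal and vertical roles interchanged. I identify the two unstable sides of $\imreg_k'$ as unstable manifolds related by one application of $\map^{-1}$, and continue each backward until it meets the local stable manifold of $\bp$ at two points $P$ and $Q=\map^{-1}(P)$. Using the thin-region energy scaling $|E_\ell|\asymp k^{-4}$ on $\imreg_k'$ (obtained by combining \Cref{lem:timeoutx,lem:timeinx,lem:precisionx} with $n=k$), one has $|x_k|\asymp k^{-1}$ there; the stable-manifold dynamics \eqref{eq:stabledyn} then give $|x_P|,|x_Q|\asymp k^{-1}$ with $|x_P-x_Q|\asymp k^{-2}$.

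At $P$ and $Q$, \eqref{eq:ucone} together with the tangent to the stable manifold $y\approx x^2/A$ furnish slopes of magnitude $\asymp k^{-1}$: the stable slope is negative in the second quadrant while the unstable slope is positive. A short planar computation then shows that the horizontal distance between the two unstable lines through $P$ and $Q$, evaluated at height $y_P$, is $\asymp k^{-2}$ with both upper and lower bounds: the opposite signs of the stable and unstable slopes force the two summands $|x_P-x_Q|$ and $(y_Q-y_P)/u_Q$ to add rather than cancel. To extend the estimate to all of $\imreg_k'$, I apply the same Gronwall-type argument used in \Cref{lem:vlength}, invoking the bound $|Du(\xi)|\ll\theta(\xi)^{-1}\ll k$ from \cite[Lemma 6.6, Proposition 4.1]{LM}; since the relevant $x$-range traversed (from the stable-manifold intersection back across $\imreg_k'$) is $\asymp k^{-1}$, the multiplicative Gronwall factor is $e^{\cO(1)}$, so the $\asymp k^{-2}$ estimate persists throughout.

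The step requiring most care is the sign-sensitive geometric computation at the stable-manifold intersection: a naive ``parallel lines'' estimate could produce either a spurious cancellation or only an $\asymp k^{-1}$ bound, so one must track precisely the opposite signs of the stable and unstable slopes in the second quadrant to recover the correct leading order $\asymp k^{-2}$. Once this subtlety is handled, all remaining steps are direct adaptations of the fat-region argument of \Cref{lem:vlength}.
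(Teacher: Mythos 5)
Your proof is correct and follows essentially the same route as the paper's: extend the two unstable sides (one the preimage of the other) until they meet the stable manifold at points $\asymp k^{-1}$ from $\bp$, decompose the horizontal gap there into the $\asymp k^{-2}$ spacing coming from \eqref{eq:stabledyn} plus the $\asymp k^{-2}$ correction from sliding along the unstable curve through the $\asymp k^{-3}$ vertical offset, note the two contributions have the same sign, and propagate with the Gronwall argument of \Cref{lem:vlength}. Your explicit sign bookkeeping and the derivation of $|x|\asymp k^{-1}$ via the energy lemmas are just slightly more detailed versions of what the paper leaves to \Cref{fig:LM-gron}.
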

\begin{proof} The upper and lower boundaries of $\imreg_k'$ are formed
  by unstable manifolds that if continued to the right will intersect
  the immediate stable manifold of $\bp$ at points whose $x$-coordinate is proportional to $k^{-1}$ according to \eqref{eq:stabledyn} (and using \eqref{eq:ucone}). Note that one unstable curve is the preimage of the other. Consider the point of intersection of the bottom unstable manifold and the stable manifold shown in \Cref{fig:LM-gron}. The horizontal line segment between this point and the the top unstable manifold consists of two subsegments both of which have length $\asymp k^{-2}$. In fact, the right subsegment has length $\asymp k^{-2}$ due to \eqref{eq:stabledyn}. The left subsegment has also length $\asymp k^{-2}$ because the angles shown in \Cref{fig:LM-gron} at the endpoints of the horizontal segment are both $\asymp k^{-1}$. So the horizontal distance between the two unstable curves at this level 
  is $\asymp k^{-2}$. 
  
  Now a similar argument to the proof of \Cref{lem:vlength} (interchanging the role of $x$ and $y$ axes) implies that the horizontal distance between the unstable curves remains $\asymp k^{-2}$ all the way down to the $x$-axis.
\end{proof}

\begin{lem} Suppose $(x,y)$ is a point on the boundary of $\imreg_k'$ which also lies on $\map^{-1}\{y=0\}$, then $y \asymp k^{-3}$.
\end{lem}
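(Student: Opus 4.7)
The plan mimics the preceding fat-region statement. First, the equation of $\map^{-1}\{y=0\}$ is $h(x)+y=0$, i.e., $y=-h(x)$; since $h(x)=bx^{3}+\cO(x^{5})$, any point on this curve obeys $y\asymp \abs{x}^{3}$ in a neighbourhood of $\bp$. Consequently the claim $y\asymp k^{-3}$ reduces to the estimate $\abs{x}\asymp k^{-1}$.

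To obtain the latter, write the boundary point as $(x,y)=(x_{k},y_{k})=\map^{k}(x_{0},y_{0})$ for some $(x_{0},y_{0})\in\reg_k'$; then $k=n(x_{0},y_{0})$ by definition of $\reg_k'$ and the identity $\imreg_k'=\map^k\reg_k'$. By \Cref{lem:precisionx} we have $\abs{x_{k}}\asymp \abs{E_{\ell}}^{1/4}$, and combining \Cref{lem:timeoutx} with \Cref{lem:timeinx} yields $k=n=\ell+(n-\ell)\asymp \abs{E_{\ell}}^{-1/4}$. Therefore $\abs{E_{\ell}}\asymp k^{-4}$, $\abs{x_{k}}\asymp k^{-1}$, and plugging into $y=-h(x_{k})$ gives $y\asymp \abs{x_{k}}^{3}\asymp k^{-3}$.

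No step poses a real obstacle here: every ingredient has been prepared in the thin-region preparatory lemmas, and the argument is strictly parallel to the fat-region calculation (where the energy gave $y\asymp k^{-2}$ and the boundary relation converted this into $\abs{x}\asymp k^{-2}$). The mild asymmetry worth flagging is that in the thin passage it is the $x$-coordinate rather than $y$ that is directly controlled by the energy; the cubic boundary relation $y=-h(x)$ (in contrast with the near-linear relation $x=-y-h(y)$ on $\map^{-1}\{x=0\}$ used in the fat case) is precisely what turns $\abs{x}\asymp k^{-1}$ into the sharper $y\asymp k^{-3}$, and thereby produces the $k^{-5}$ measure bound needed in \Cref{prop:fat}'s thin analogue.
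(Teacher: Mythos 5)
Your proof is correct and follows essentially the same route as the paper's: identify the point as $\map^{k}$ of a point of $\reg_k'$ with $n=k$, combine \Cref{lem:precisionx}, \Cref{lem:timeoutx} and \Cref{lem:timeinx} to get $\abs{x}\asymp k^{-1}$, and convert this to $y\asymp k^{-3}$ via the cubic equation $y=-h(x)$ of $\map^{-1}\{y=0\}$. One caveat: your step $n=\ell+(n-\ell)\asymp\abs{E_{\ell}}^{-1/4}$ contradicts \Cref{lem:timeinx} as literally stated (with $n-\ell\asymp\abs{E_{\ell}}^{-1/2}$ one would get $n\asymp\abs{E_{\ell}}^{-1/2}$, hence only $\abs{x}\asymp k^{-1/2}$ and $y\asymp k^{-3/2}$), but it agrees with what the proof of that lemma actually establishes, namely $n-\ell\asymp\abs{E_{\ell}}^{-1/4}$; the exponent $-1/2$ in its statement appears to be a misprint, so your conclusion stands.
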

\begin{proof} By \Cref{lem:precisionx}, 
\Cref{lem:timeoutx} and \Cref{lem:timeinx}, $-x \asymp k^{-1}$.
  Since $\map^{-1}(\{y=0\})= \{(x, -h(x))\}$, it follows that $y \asymp k^{-3}$.
\end{proof}

\begin{lem} \label{lem:measregsym} $\leb(\reg_k') = \leb(\imreg_k') \asymp k^{-5}$.
\end{lem}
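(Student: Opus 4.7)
The plan is to mirror the area computation of \Cref{lem:measreg} with the roles of the horizontal and vertical directions interchanged. The two immediately preceding lemmas already supply the relevant dimensions of $\imreg_k'$: by \Cref{lem:hlength} the horizontal distance between its unstable sides is $\asymp k^{-2}$, and by the last lemma the point where the upper boundary $\map^{-1}\{y=0\}=\{(x,-h(x))\}$ meets the boundary of $\imreg_k'$ sits at height $\asymp k^{-3}$. Since $\imreg_k'$ is bounded below by the $x$-axis and on the sides by pieces of unstable manifolds, this pins down the vertical extent of $\imreg_k'$ as $\asymp k^{-3}$.

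For the upper bound $\leb(\imreg_k') \ll k^{-5}$, I would simply note that $\imreg_k'$ fits into a true (not Markov) rectangle with sides $\asymp k^{-2}$ and $\asymp k^{-3}$. For the lower bound, I would invoke \eqref{eq:ucone} at the scale $|x|\asymp k^{-1}$ (guaranteed by \Cref{lem:precisionx}): the slopes of the two unstable boundary curves are $\asymp k^{-1}$, so across the horizontal range $\asymp k^{-2}$ they can move vertically by at most $\asymp k^{-3}$. This matches the vertical extent and prevents the two unstable sides from pinching together, so $\imreg_k'$ contains a rectangle of comparable dimensions, yielding $\leb(\imreg_k') \gg k^{-5}$.

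The equality $\leb(\reg_k') = \leb(\imreg_k')$ is then immediate from the identity $\imreg_k' = \map^k \reg_k'$ and the fact that $\map$ preserves Lebesgue measure (its Jacobian is identically $1$, as is visible from \eqref{eq:map}). The only delicate step is the lower bound on the area, which rests on the slope control \eqref{eq:ucone} together with \Cref{lem:precisionx}; this plays precisely the role that \Cref{lem:incl} played in the fat-region argument, and is really the only non-routine ingredient beyond what the preceding two lemmas already establish.
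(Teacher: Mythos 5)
Your proposal is correct and follows essentially the same route as the paper: the paper's proof simply states that the estimate is a direct consequence of the two preceding lemmas (horizontal width $\asymp k^{-2}$, vertical extent $\asymp k^{-3}$) together with the $\map$-invariance of Lebesgue measure, and your write-up just fills in the routine details (upper bound via the enclosing true rectangle, lower bound via the slope control \eqref{eq:ucone} preventing the unstable sides from pinching). No gaps.
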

\begin{proof} This is a direct consequence of the previous two lemmas and the $\map$-invariance of $\leb$.
\end{proof}

\begin{lem} \label{lem:inclsym}
$\abs{\pi_x(\imreg_k')} \asymp k^{-2}$, where $\pi_x$
denotes projection onto the $x$-axis.
\end{lem}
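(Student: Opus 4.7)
The plan is to mirror the proof of \Cref{lem:incl} with the roles of the $x$ and $y$ axes interchanged. The lower bound $\abs{\pi_x(\imreg_k')} \gg k^{-2}$ follows at once from \Cref{lem:hlength}, since the horizontal distance between the two unstable sides of $\imreg_k'$ is already $\asymp k^{-2}$; projecting onto the $x$-axis a region that contains horizontal segments of this length can only make things bigger.

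For the upper bound, I would argue that the projection cannot be much larger than this horizontal distance by controlling the slope of the unstable boundary curves. Using \eqref{eq:ucone}, in the relevant portion of the thin region the unstable direction $(1,u)$ satisfies $u\ll \abs{x}+\sqrt{\abs{y}}$, and since $\imreg_k'$ lies within the horizontal strip $0\le y\ll k^{-3}$ and the vertical strip $\abs{x}\asymp k^{-1}$ (by \Cref{lem:precisionx} together with \Cref{lem:timeoutx,lem:timeinx}), the slope of each unstable side is $u\asymp k^{-1}$.

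Combining this with the vertical extent of $\imreg_k'$, which by the preceding lemma (applied to the boundary point on $\map^{-1}\{y=0\}$) is $\asymp k^{-3}$, the horizontal excursion of each unstable side across $\imreg_k'$ is at most $\asymp k^{-3}/k^{-1}=k^{-2}$. Adding this to the $\asymp k^{-2}$ horizontal distance between the unstable sides at a single level yields $\abs{\pi_x(\imreg_k')}\ll k^{-2}$, which is exactly the matching upper bound.

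The only mildly delicate point is to make sure that the bound $u\asymp k^{-1}$ holds uniformly over all of $\imreg_k'$ and not merely at the intersection with the stable manifold. This should be routine: the horizontal extent of $\imreg_k'$ is $\asymp k^{-2}\ll k^{-1}$, so $\abs{x}$ stays within a fixed multiple of its value at the top of $\imreg_k'$ throughout the region, keeping the $\abs{x}$-contribution to \eqref{eq:ucone} of order $k^{-1}$, while the $\sqrt{\abs{y}}$-contribution is $\ll k^{-3/2}\ll k^{-1}$. Once this uniformity is in hand, the slope-times-height estimate in the previous paragraph closes the argument.
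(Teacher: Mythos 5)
Your proposal is correct and follows essentially the same route as the paper: the lower bound from \Cref{lem:hlength}, and the upper bound by noting that the unstable sides have slope $\asymp k^{-1}$ (via \eqref{eq:ucone} and $\abs{x}\asymp k^{-1}$), so their horizontal excursion over the vertical extent $\asymp k^{-3}$ is $\ll k^{-2}$. Your added remark on the uniformity of the slope bound over all of $\imreg_k'$ is a detail the paper leaves implicit, and it is handled adequately.
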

\begin{proof} 
  We established in \Cref{lem:hlength} that the horizontal distance between the unstable sides of $\imreg_k'$ is $\asymp k^{-2}$. It follows that $\abs{\pi_x(\imreg_k')} \gg k^{-2}$. To show the upper bound, it remains to take care of the inclination of the unstable sides. By \eqref{eq:ucone}, the angle of the unstable
  boundaries of $\imreg_k$ with the horizontal is $\asymp k^{-1}$  which
  means they can increase horizontally by a factor of $k^{-2}$ in a vertical
  distance of $k^{-3}$. It follows that $\abs{\pi_x(\imreg_k)} \ll k^{-2}$.
\end{proof}

\begin{rem}
  It follows from the previous lemmas that $\imreg_k'$ is contained in
  a true rectangle (not a Markov rectangle) of vertical length $\asymp
  k^{-3}$ and of horizontal length $\asymp k^{-2}$.
\end{rem}

\begin{figure}[ht]
    \centering
    \includegraphics[width = 0.8\columnwidth, height = 0.8\columnwidth, keepaspectratio]{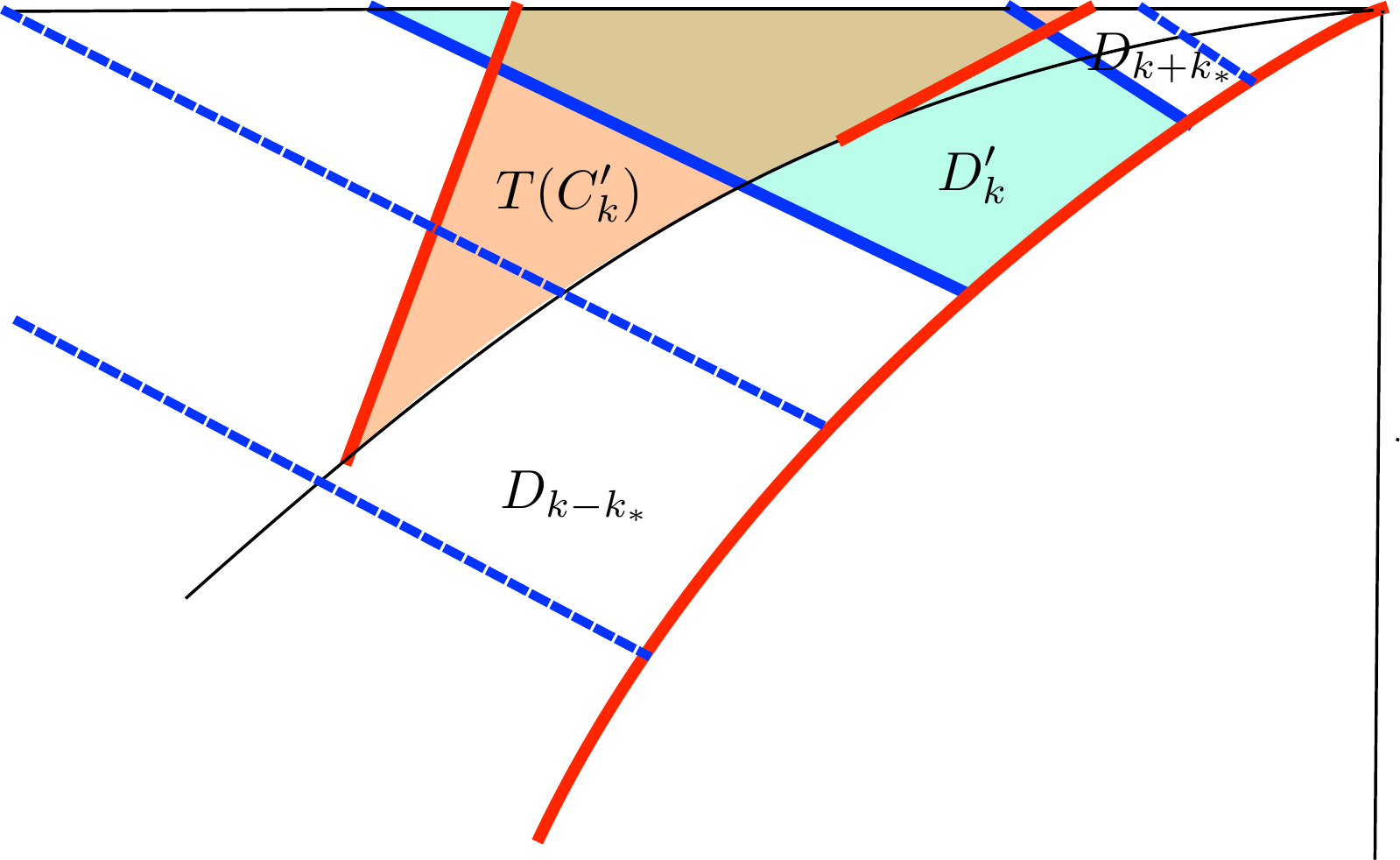}
    \caption{Proof of \Cref{lem:symcoverx}.}
    \label{fig:LM-gron-sym-x}
  \end{figure}

Let $\symimreg_k' :=\Pi_1 \imreg_k'$. Due to the symmetry, $\symimreg_k'$ is the set of
points on the bottom of the $x$-axis whose preimage is on the top of
the $x$-axis and that spend exactly $k$ iterations in $\pnbd$ before
mapping into $\map\pnbd\setminus \pnbd$. 

\begin{lem} \label{lem:symcoverx} There exists $k_* \in \bN$ such that for all sufficiently large $k$,
  \begin{equation*} \map\imreg_k' \subset \symimreg_{k-k_*}' \cup \dots \cup \symimreg_{k+k_*}'.
  \end{equation*}
\end{lem}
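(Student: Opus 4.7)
The plan is to mirror the proof of \Cref{lem:symcover} with horizontal and vertical roles interchanged, as befits the thin-region geometry.

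First I would locate $\map\imreg_k'$. Combining \Cref{lem:precisionx}, \Cref{lem:timeoutx}, and the bounded-rectangle information gathered in \Cref{lem:hlength}, \Cref{lem:measregsym} and \Cref{lem:inclsym}, the set $\imreg_k'$ sits adjacent to the $x$-axis (bounded between $\{y=0\}$ and $\map^{-1}\{y=0\}=\{(x,-h(x))\}$) with horizontal extent $\asymp k^{-2}$ and vertical extent $\asymp k^{-3}$, so that $(x,y)\in\imreg_k'$ satisfies $\abs{x}\asymp k^{-1}$ and $y\in[0,-h(x)]$. Applying $\map(x,y)=(x+h(x)+y,\,h(x)+y)$ shifts $\imreg_k'$ horizontally by $h(x)+y=\cO(k^{-3})$, which is negligible compared to its horizontal extent $\asymp k^{-2}$, and sweeps the new $y$-coordinate through $[h(x),0]\asymp[-k^{-3},0]$; hence $\map\imreg_k'$ is a thin curved strip immediately across the $x$-axis, with the same horizontal extent $\asymp k^{-2}$ and the same vertical extent $\asymp k^{-3}$.

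Next I would describe $\symimreg_j'=\Pi_1\imreg_j'$ for $j$ near $k$. Using $\Pi_1(x,y)=(-x,y+h(x))$ together with the reversibility $\Pi_1\map\Pi_1=\map^{-1}$ and $\Pi_1\pnbd=\pnbd$, each $\symimreg_j'$ is a strip of the same intrinsic shape as $\imreg_j'$, located adjacent to the $x$-axis as depicted in \Cref{fig:LM-gron-sym-x}, with horizontal extent $\asymp j^{-2}$ and vertical extent $\asymp j^{-3}$. By \eqref{eq:ucone} the slope of its unstable sides is $\asymp j^{-1}$, so the inclination contributes vertical variation $\asymp j^{-1}\cdot j^{-2}=j^{-3}$ across the strip, comparable to its vertical height.

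The main step is then the counting argument already used in \Cref{lem:symcover}. The family $\{\symimreg_j'\}_{j}$ consists of strips whose position shifts with $j$ at a rate comparable to their own extent in the relevant direction, specifically the horizontal shift $\abs{j^{-1}-(j+1)^{-1}}\asymp j^{-2}$ matches the horizontal strip width. Since the strip $\map\imreg_k'$ shares this same common asymptotic geometry and all of the proportionality constants appearing above are independent of $k$, there exists a uniform $k_*\in\bN$ such that a window of $2k_*+1$ consecutive $\symimreg_j'$'s centred at $j=k$ covers $\map\imreg_k'$, producing the claimed inclusion.

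The main subtlety is purely bookkeeping: one has to verify that all four length scales (horizontal extent, vertical extent, slope of unstable boundaries, and inter-strip displacement) scale with matching powers of $k$ and with constants uniform in $k$; once this is assembled from the preceding lemmas, the argument is essentially verbatim from the fat-region case of \Cref{lem:symcover}.
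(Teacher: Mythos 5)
Your argument is correct and follows essentially the same route as the paper's proof: describe the scales of $\map\imreg_k'$ and of the strips $\symimreg_j'$, note that the displacement caused by $\map$ is negligible against the strip width $\asymp k^{-2}$, and conclude from the uniformity in $k$ of all proportionality constants that a window of boundedly many consecutive $\symimreg_j'$ suffices. One small point in your favour: you compute the horizontal displacement of $\map\imreg_k'$ as $\cO(k^{-3})$ (coming from $h(x)+y$ with $\abs{x}\asymp k^{-1}$), which is the correct order for the thin region, whereas the paper states $k^{-6}$ (apparently carried over from the fat-region computation where $\abs{x}\asymp k^{-2}$); in either case the shift is $o(k^{-2})$, so the covering argument is unaffected.
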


\begin{proof} Using the definition of $\map$, $\map\imreg_k'$ is shifted to the left horizontally
by an amount proportional to $k^{-6}$ with respect to $\imreg_k'$ and the horizontal distance between its unstable sides is $\asymp k^{-2}$. Also $\symimreg_k'$
has horizontal length proportional to $k^{-2}$ and has inclination (with
respect to the vertical) of $k^{-2}$ (see proof of \Cref{lem:inclsym}) as depicted in \Cref{fig:LM-gron-sym-x}. Since the proportionality constants are independent of $k$, $\map\imreg_k'$ can be covered by finitely many $\symimreg_k'$s. That is, there exists $k_*$ such that $\map\imreg_k' \subset \symimreg_{k-k_*}' \cup \dots \cup \symimreg_{k+k_*}'$.
\end{proof}

\begin{lem} For sufficiently large $k$,
    \begin{equation*}
      \reg_k' \subset R_{2k-k_*} \cup \dots \cup R_{2k+k_*}, \qquad R_{2k}, R_{2k+1} \subset \reg_{k-k_*}' \cup \dots \cup \reg_{k+k_*}' .
    \end{equation*}
\end{lem}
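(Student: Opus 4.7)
The plan is to imitate the proof of the analogous lemma in the fat region, using Lemma \ref{lem:symcoverx} as the replacement for Lemma \ref{lem:symcover}. All the geometric input is already contained in that lemma, so the argument becomes bookkeeping about how the itinerary of a trajectory through $\pnbd$ splits between the top and bottom halves of the $x$-axis, rather than between the left and right halves of the $y$-axis.

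For the first inclusion, I would take $(x,y)\in\reg_k'$. By definition the orbit spends exactly $k$ iterations with $y\geq 0$ and then enters $\imreg_k'=\map^k\reg_k'$. One more application of $\map$ lands it in $\map\imreg_k'$, which by Lemma \ref{lem:symcoverx} is contained in $\symimreg_{k-k_*}'\cup\cdots\cup\symimreg_{k+k_*}'$. By the symmetric description of $\symimreg_j'$ (points that spend exactly $j$ iterations in $\pnbd$, now with $y\leq 0$, before being mapped into $\map\pnbd\setminus\pnbd$), the total first return time of $(x,y)$ to $Y$ equals $k+j$ for some $j\in\{k-k_*,\dots,k+k_*\}$. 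This immediately gives $\reg_k'\subset R_{2k-k_*}\cup\cdots\cup R_{2k+k_*}$.

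For the reverse inclusion, I would take $(x,y)\in R_\trn$ with $\trn\in\{2k,2k+1\}$. The orbit accumulates $\trn-1$ iterations inside $\pnbd$, split into $j_+$ above the $x$-axis followed by $j_-=\trn-1-j_+$ below. Since the point then belongs to $\reg_{j_+}'$, the first inclusion (or equivalently Lemma \ref{lem:symcoverx} together with the $\Pi_1$-symmetry of $\pnbd$ and $\map$) forces $|j_+-j_-|\leq k_*$. Combining with $j_++j_-\in\{2k-1,2k\}$ yields $j_+\in\{k-k_*,\dots,k+k_*\}$ (with room to spare), so $(x,y)\in\reg_{j_+}'$ gives the desired inclusion.

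I do not expect any real obstacle: the only nontrivial ingredient is the covering estimate Lemma \ref{lem:symcoverx}, which has already been proved; everything else is a careful counting argument identical in structure to the one used in the fat region. The only thing to watch is that the constant $k_*$ produced here may differ by a bounded amount from the one in the fat case, but since the lemma only asserts the existence of such a constant we can safely take the maximum of the two.
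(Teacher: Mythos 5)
Your proposal is correct and follows essentially the same route as the paper: the first inclusion is read off from Lemma \ref{lem:symcoverx}, and the second is the same counting argument, splitting the $\trn-1$ iterations in $\pnbd$ into the portions above and below the $x$-axis and using the covering lemma (via the $\Pi_1$-symmetry) to bound their difference by a constant. Your version is in fact slightly more explicit about the bookkeeping than the paper's, and the remark about enlarging $k_*$ by a bounded amount is harmless since only existence of such a constant is claimed.
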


\begin{proof}
  The first statement follows directly from the previous
  lemma. For the second statement, note that $R_{2k+1}$ spends $2k$
  iterations in $\pnbd$ and this number of iterations is divided between
  top and bottom sides of the $x$-axis with the restriction that the
  number of iterations on the top and on the bottom can differ by at
  most $k_*$ because of the previous lemma and the symmetry.  This forces
  at most $j$ iterations on the top and $2k-j$ iterations on the bottom, where $-k_* \le j \le k_*$. This implies
  that $R_{2k+1}$ is a subset of $\reg_{k-k_*}' \cup \dots \cup
  \reg_{k+k_*}'$. For the same reason $R_{2k}$ must be a subset of
  $\reg_{k-k_*}' \cup \dots \cup\reg_{k+k_*}'$.
\end{proof}

\begin{prop} \label{prop:thin}
  \begin{equation*}
    \leb(R_{\trn}^{\text{thin}}) \asymp \trn^{-5}.
  \end{equation*}
\end{prop}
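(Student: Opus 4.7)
The plan is to mirror the proof of \Cref{prop:fat} verbatim, now invoking the thin-region analogues of the supporting lemmas that have just been established throughout \Cref{subsec:thin}. All of the key inputs are already in place: \Cref{lem:measregsym} gives $\leb(\reg_k') = \leb(\imreg_k') \asymp k^{-5}$, \Cref{lem:symcoverx} provides the symmetric covering, and the final inclusion lemma yields the two-sided containment
\begin{equation*}
\reg_k' \subset R_{2k-k_*} \cup \dots \cup R_{2k+k_*}, \qquad R_{2k}, R_{2k+1} \subset \reg_{k-k_*}' \cup \dots \cup \reg_{k+k_*}'.
\end{equation*}

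The upper bound $\leb(R_\trn^{\text{thin}}) \ll \trn^{-5}$ is immediate from the second containment together with \Cref{lem:measregsym}: each $R_\trn$ sits inside the union of $O(1)$ sets $\reg_j'$ with $j \asymp \trn/2$, and each such $\reg_j'$ contributes a measure $\asymp j^{-5} \asymp \trn^{-5}$. For the matching lower bound I would use the first containment to write $k^{-5} \asymp \leb(\reg_k') \le \sum_{|N-2k|\le k_*} \leb(R_N^{\text{thin}})$; combining this with the already established uniform upper bound on each term in the window shows that every individual $\leb(R_N^{\text{thin}})$ with $N$ in that window must itself be $\gg N^{-5}$, up to a constant depending only on the fixed integer $k_*$.

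I expect no real obstacle, since the genuinely new geometric content—the exchange of roles between $x$ and $y$, the estimate $y \ll x^2$ on $\bP_M'$, the relation $\ell \asymp |E_\ell|^{-1/4}$ and $n-\ell \asymp |E_\ell|^{-1/2}$, and the $\asymp k^{-2} \times k^{-3}$ dimensions of $\imreg_k'$—was all carried out in \Cref{lem:Hx}--\Cref{lem:inclsym}. The proposition is therefore a bookkeeping step that packages those estimates. The only point requiring care, as in the fat case, is that the constant $k_*$ from the covering lemma is independent of $k$, so the $\asymp$ in the conclusion is genuinely uniform in $\trn$.
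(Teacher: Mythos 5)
Your proposal takes exactly the paper's route: the paper's own proof of \Cref{prop:thin} is the same one-line deduction from the two containments
$\reg_k' \subset R_{2k-k_*} \cup \dots \cup R_{2k+k_*}$ and $R_{2k}, R_{2k+1} \subset \reg_{k-k_*}' \cup \dots \cup \reg_{k+k_*}'$ together with $\leb(\reg_k') \asymp k^{-5}$ from \Cref{lem:measregsym}. Your upper bound is correct: each $R_\trn^{\text{thin}}$ is covered by $2k_*+1$ sets $\reg_j'$ with $j \asymp \trn/2$, each of measure $\ll \trn^{-5}$.

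The deduction you offer for the individual lower bound, however, is not valid as stated. From $k^{-5} \ll \leb(\reg_k') \le \sum_{|N-2k|\le k_*} \leb(R_N^{\text{thin}})$ together with the upper bound $\leb(R_N^{\text{thin}}) \ll N^{-5}$ on each term, you may only conclude by pigeonhole that \emph{some} $N$ in the window of $2k_*+1$ indices satisfies $\leb(R_N^{\text{thin}}) \gg N^{-5}$, not every one: a window with measures $(c k^{-5}, 0, \dots, 0)$ satisfies both hypotheses and violates your conclusion for all but one index. Since both $\{R_N^{\text{thin}}\}$ and $\{\reg_k'\}$ are families of full strips partitioning the thin part of $\wholetail$, a given $R_N^{\text{thin}}$ could a priori be a very narrow strip straddling the boundary between two consecutive $\reg_k'$, so the containments alone cannot rule this out. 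To get the stated two-sided bound for every $\trn$ one needs a direct width estimate on $R_\trn^{\text{thin}}$ itself (e.g.\ repeating the arguments of \Cref{lem:hlength}--\Cref{lem:measregsym} with the preimages $\map^{-k}\partial\pnbd$ in place of $\map^{-k}(\{y=0\})$). To be fair, the paper's one-line proof glosses over exactly the same point; and for the downstream use in \Cref{prop:tailmeas} and the proof of \Cref{thm:main} only the tail bound $\sum_{N>n}\leb(R_N) \gg \sum_{k > n/2+k_*}\leb(\reg_k') \gg n^{-4}$ is needed, and that does follow cleanly from the first containment. But as a proof of the proposition as literally stated, your lower-bound step has a genuine gap.
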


\begin{proof} 
This is a direct consequence of the previous lemma and the estimate on
the measure of $\reg_k'$ from \Cref{lem:measregsym}.
\end{proof}

Adding the two estimates of \Cref{prop:fat}, \Cref{prop:thin} and symmetrical estimates for other regions, we
get:
\begin{prop} \label{prop:tailmeas}
  \begin{equation*}
    \leb(\{\rtzeroone=N\} \cap \wholetail) \asymp N^{-5}.
  \end{equation*}
\end{prop}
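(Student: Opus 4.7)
The proposal is to decompose $R_N = \{\rtzeroone=N\}\cap\wholetail$ according to the two symmetries available and then invoke the two previous propositions on each piece. Recall that $\wholetail = \map^{-1}\pnbd\setminus\pnbd$ consists of points which lie outside $\pnbd$ but enter $\pnbd$ at the next iterate; for large $N$ such points must be very close to the local stable manifold of $\bp$, so they are confined to a small neighborhood of the four local stable branches emanating from $\bp$. Consequently $R_N$ naturally splits into four components, one adjacent to each of the four local branches of the stable manifold.

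First I would use the reversibility \eqref{eq:reversible} to reduce to the second quadrant. Since $\Pi$ and $\Pi_1$ conjugate $\map$ to $\map^{-1}$ and preserve Lebesgue measure, they map $\{\rtzeroone=N\}$ to sets on which the symmetrical return-time to $Y$ equals $N$, hence give bijections (up to measure-zero) between the four components of $R_N$. Thus it suffices to bound the measure of the portion $R_N^{(2)}$ of $R_N$ lying in the second quadrant.

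Next, within the second quadrant I would split $R_N^{(2)} = R_N^{\text{fat}} \cup R_N^{\text{thin}}$, where the fat/thin pieces denote the parts lying above/below the local stable manifold of $\bp$, respectively. These are exactly the objects for which Propositions \ref{prop:fat} and \ref{prop:thin} apply, yielding $\leb(R_N^{\text{fat}}) \asymp N^{-5}$ and $\leb(R_N^{\text{thin}}) \asymp N^{-5}$. Adding these (a finite sum) gives $\leb(R_N^{(2)}) \asymp N^{-5}$, and multiplying by the constant factor $4$ coming from the symmetry gives $\leb(R_N) \asymp N^{-5}$.

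The only subtle point — and the only place where one has to be slightly careful — is to check that the four symmetry copies together with the fat/thin split genuinely exhaust $R_N$ up to a set of negligible measure. This is a geometric statement about $\wholetail$ which follows from the description at the beginning of Section~\ref{sec:firstret}: the boundary curves of $R_N$ are pieces of iterates of the stable/unstable sides of $\pnbd$, and for large $N$ they accumulate on the four stable branches, so no additional component can appear away from these branches. Once this clean partition is in place, the statement is just the sum of two $\asymp N^{-5}$ estimates with multiplicity $4$, which is immediate.
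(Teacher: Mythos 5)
Your proposal is correct and follows essentially the same route as the paper, whose proof of Proposition~\ref{prop:tailmeas} is precisely to add the fat-region estimate (Proposition~\ref{prop:fat}), the thin-region estimate (Proposition~\ref{prop:thin}), and the symmetric estimates for the remaining regions obtained via the reversibility \eqref{eq:reversible}. (Minor point: the local stable manifold of $\bp$ has two branches, not four, so $R_N$ splits into two components---second and fourth quadrant---each further divided into a fat and a thin part; this does not affect the argument, since any finite sum of quantities $\asymp N^{-5}$ is $\asymp N^{-5}$.)
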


\section{Mixing rates}
\label{sec:mixrates} 
To obtain upper and lower bounds on mixing rates we further induce
\(\mapone\) (the first return map of $\map$ to $Y$) to a two-sided Young tower. This will allows us to apply \cite[Theorem
7.4]{BMT}. 

In order to induce $\mapone$ to a two-sided Young tower with exponential tails, we check conditions (P1)--(P5) of \cite[Section~1]{You1} for $\mapone$. We will see that hese conditions follow from the existence of a finite Markov partition for $\map$ and the (non-uniform) hyperbolicity estimates for $\map$ established in \cite{LM}.

Recall from \Cref{sec:firstret} that $\mapone$ has a countable Markov partition $\cP_1$, which is a certain refinement of the finite parition $\cP \setminus \{\pnbd\}$. We take $\wholetail = \bigcup_{\trn \ge 2} R_{\trn}$ as the set with hyperbolic product structure required by \cite[(P1)]{You1}. We claim that each of the sets $R_{\trn}$ return under iterations of $\mapone$ and u-cross the set $\wholetail$. Furthermore, there is a fixed time $N_*$ before which this happens. This claim follows from the construction of $R_{\trn}$ and the existence of the finite Markov partition for $\map$. Indeed the image of each $R_{\trn}$ under $\mapone$ must u-cross elements of $\cP$. By finiteness of $\cP$, the Markov property and the ergodicity of $\mapone$, in at most finitely many more iterations of $\mapone$, $R_{\trn}$ must also u-cross $\wholetail$ in which case we stop and define the return time accordingly. The leftover is again a union of sets that u-cross elements of $\cP$, so we can repeat the same argumet for them. Since the time between the stopping times is uniformly bounded an due to boundedness of distortion, the tail of the stopping times will be exponentially small in the number of iterations. We have established \cite[(P2)]{You1}.

It remains to establish \cite[(P3)-(P5)]{You1}. 

\textit{Hyperbolicity:} \cite[(P3) and (P4)(a)]{You1} require exponential contraction of $\mapone$ along stable manifolds and backward contraction along unstable manifolds. Both statments follow from the estimates on expansion and contraction rates of the original map $\map$ in the neighbourhood $\pnbd$ of $\bp$. These estimates are obtained in \cite[Lemma~5.1, 5.2 and Corollary~5.3]{LM}. Essentially, a vector in the unstable cone at $\xi \in R_N$ expands by a factor proportional to $N^2$ under $\mapone|_{R_N}=\map^N$. This implies uniform expansion on $\cup_{N=N_0}^\infty R_N$ for some $N_0$. We have also uniform expansion on the other finitely many $R_N$'s by the uniform hyperbolicity of the original $\map$ away from $\bp$. Similar estimate holds for backward expansion along the stable direction \cite[Corollary~5.3]{LM}. Since $\map$ preserves Lebesgue measure, similar estimates hold for contraction along stable and backward expansion along unstable manifolds (see \cite[Lemma~5.2]{LM}).

\textit{Distortion bounds:} Let $J^u\map(\xi) = \norm{D_\xi T (1,u)}/\norm{(1,u)}$ denote the factor of expansion on the unstable manifold of $\xi$. By direction calculation,

\begin{equation*}
    J^u\map(\xi) =  \left( \frac{(1+h'(x)+u(\xi))^2 + (h'(x)+u(\xi))^2}{1+u^2(\xi)} \right)^{1/2}.
\end{equation*}

Since $h$ is smooth and $u$ is differentiable on $\bT^2\setminus \{\bp\}$, with a uniform bound on the derivative, it follows that there exists $C>0$ such that $|\nabla \log J^u\map (\xi)| \le C$. Now, by the mean value inequality, it follows that if $\eta$ is another point on the unstable manifold of $\xi$, then  
\begin{equation*}
    \frac{J^u\map(\xi)}{J^u\map(\eta)} \le e^{C\metr(\xi, \eta)}.
\end{equation*}
The distortion property for $\mapone$ now follows if we can ensure the same inequality along the orbit of $\xi$, $\eta$ as they move through the region $\pnbd$, but this follows by chain rule and the estimate on the expansion factor along the orbit, which is $\ge CN^2$ (for some constant $C>0$) where the points spend $N$ iterations in $\pnbd$. Therefore the distortion property also holds for $\mapone$. Conditions (P4)(b) and (P5)(a) follow from this distortion property and expansion and contraction along stable and unstable directions. Note that the estimates on the slope of stable and unstable manifolds \eqref{eq:ucone} allow one to relate the distance $\metr(\cdot, \cdot)$ to the distance along stable or unstable directions.

\textit{Regularity of the stable holonomy:} Property (P5)(b) of \cite{You1} requires the existence and absolute continuity of the stable holonomy as well as an asymptotic formula for the Jacobian of the holonomy. All of these properties follow from the uniform hyperbolicity and distrotion estimates for $\mapone$ established above. We refer the reader to \cite[Proof of Theorem~3.1]{Mane}.

We have finished checking conditions (P1)-(P5) of \cite{You1} from which follows the existence of a two sided Young tower with exponential tails (for $\mapone$). Let $\rtonetwo$ be the stopping time defined above and $\maptwo = \mapone^\rtonetwo$. Let us denote the full return time by $\rtzerotwo: \wholetail \to \bN$, $\rtzerotwo(x)=\sum_{k=0}^{\rtonetwo(x)-1}\rtzeroone(T_1^k(x))$. That is $\maptwo = \map^\rtzerotwo$. From standard computations $\rtzerotwo$ will have the same tail of return times as $\rtzeroone$ with an additional 
 $[\log(n)]^\alpha$ (for some $\alpha >1$) factor. Note that since $\map$ is mixing, it follows that $\gcd(\rtzerotwo) = 1$.

The article \cite{BMT} requires one more condition. We need to check that there exist $C>0$ and $\theta \in (0,1)$ such that for every $z, z' \in \wholetail$ and $n \ge 1$, 
\begin{equation} \label{eq:sContractionRate}
  \metr(\maptwo^n z, \maptwo^n z') \le C (\theta^n + \theta^{s(z,z')-n}).
\end{equation}

This follows from uniform hyperbolicity. We can choose $C = \diam(\wholetail)$ and $\theta = \max(\lambda_u^{-1}, \lambda_s)$, where $\lambda_u$ and $\lambda_s$ are respectively the worst expansion and contraction of the map $\maptwo$. Then \eqref{eq:sContractionRate} follows from $\maptwo$ being a uniformly hyperbolic map.

Now we are in a position to apply \cite[Theorem 7.4]{BMT}. 

\begin{proof}[Proof of \Cref{thm:main}]
  Denote $\bar\varphi_{0,2}=\int_Q \rtzerotwo \,d\leb$. By \cite[Theorem~7.4, Proposition~7.3]{BMT}, for all $\eta >0$ sufficiently close to $1$ there exists a constant $C>0$ such that for every $\Phi, \Psi \in\cC^\eta(\bT^2)$ satisfying $\int_{\bT^{2}} \Phi
  \,d\leb\int_{\bT^{2}} \Psi \,d\leb=1$ (and supported away from $\bp$ for the lower bound)
  \begin{equation}\label{eq:BMT}
    \abs{\corr(\Phi, \Psi, n) -  \bar\varphi_{0,2}\sum_{j>n} \leb(\rtzerotwo>j)} \le C\norm{\Phi}_{\cC^\eta} \norm{\Psi}_{\cC^\eta} (\gamma_n+\zeta_{\beta'}(n)), 
  \end{equation} 
 where 
 \begin{itemize}
   \item $\beta'$ can be taken any number $<\beta$ and $\beta$ is the polynomical rate of decay of $\leb(\rtzeroone>n)$, which for us, by \Cref{prop:tailmeas}, $\beta =4$.
   \item $\gamma_n \le C n^{-\beta'}\log n$,  by \cite[Proposition~3.2]{BMT}.
   \item $\zeta_{\beta'}(n)=n^{-\beta'}$, for any $\beta'>2$ by \cite[equation~(1.3)]{BMT}. 
 \end{itemize}
 Also, by \cite[Proposition~5.1]{BMT} and \Cref{prop:tailmeas},
 \begin{equation*}
   C_1(\log j)^{-1}j^{-4} \le \leb(\rtzerotwo>j) \le C_2 (\log j)^4 j^{-4}.
 \end{equation*}
 Therefore, absorbing $\bar\varphi_{0,2}$ into the constants $C_1, C_2$ (whose values from one occurence to the other are not necessarily the same), \eqref{eq:BMT} implies
 \begin{equation*}
   C_1 \norm{\Phi}_{\cC^\eta} \norm{\Psi}_{\cC^\eta} n^{-3}(\log n)^{-1}\le \abs{\corr(\Phi, \Psi, n)} \le C_2 \norm{\Phi}_{\cC^\eta} \norm{\Psi}_{\cC^\eta} n^{-3}(\log n)^4. 
 \end{equation*} 
 In the case that one of the observables has zero mean, \cite[Theorem~7.4]{BMT} states that $\abs{\corr(\Phi, \Psi, n)}\le C \gamma_n \norm{\Phi}_{\cC^\eta} \norm{\Psi}_{\cC^\eta}$. So $\abs{\corr(\Phi, \Psi, n)} \le C n^{-\beta'}\log n \norm{\Phi}_{\cC^\eta} \norm{\Psi}_{\cC^\eta}$. \Cref{thm:main} is proved.
\end{proof}

\bibliographystyle{abbrv}

\end{document}